\newtheorem{theorem}{Theorem}[subsection]
\newtheorem{thmx}{Theorem}
\newtheorem{corollary}{Corollary}[theorem]
\newtheorem{lemma}[theorem]{Lemma}
\newtheorem{proposition}{Proposition}[subsection]
\newtheorem{definition}{Definition}[subsection]
\newtheorem*{remark}{Remark}
\newlist{gross}{itemize}{1}
\setlist[gross,1]{label=,labelwidth=1.2in,align=parleft,itemsep=0.1\baselineskip,leftmargin=10em}
\def\NN{{\mathbb N}}
\def\RR{{\mathbb R}}
\def\TT{{\mathbb T}}
\def\SS{{\mathbb S}}
\def\VV{{\mathbb V}}
\def\EE{{\mathbb E}}
\def\0{{\mathbf 0}}
\def\1{{\mathbf 1}}
\def\Bcal{{\mathcal B}}
\def\Ccal{{\mathcal C}}
\def\Ecal{{\mathcal E}}
\def\Fcal{{\mathcal F}}
\def\Ical{{\mathcal I}}
\def\Ocal{{\mathcal O}}
\def\Scal{{\mathcal S}}
\def\Vcal{{\mathcal V}}
\def\sup{\mathrm{sup}}
\begin{document}

%Topmatter 

%One author
\title{Explicit Dynamical Systems on the Sierpi\'nski curve}
\author{Worapan Homsomboon}
\address{Oregon State University}  
\email{homsombw@oregonstate.edu, homsombw@gmail.com}
 
%\author{Thomas A. Schmidt}
%\address{Oregon State University\\Corvallis, OR 97331}
%\email{toms@math.orst.edu}
%\keywords{pseudo-Anosov, SAF invariant, flux, translation surface, Veech group}
%\subjclass[2010]{37D99,(30F60,37A25,37A45,37F99, 58F15)}
\date{Last update on 7 September 2022}

\maketitle
%%%%%%%%%%%%%%%%%%%%%%%%%%%%%%%%%%%%%%%%%%%%%%%%%%%%%%%
\begin{abstract}
We apply Boro\'nski and Oprocha's inverse limit construction of dynamical systems on the Sierpi\'nski carpet by using the initial systems of $n-$Chamanara surfaces and their $n-$baker transformations, $n \geq 2$. We show that all positive real numbers are realized as metric entropy values of dynamical systems on the carpet. We also produce a simplification of Boro\'nski and Oprocha's proof showing that dynamical systems on the carpet do not have the Bowen specification property.

\end{abstract}
%%%%%%%%%%%%%%%%%%%%%%%%%%%%%%%%%%%%%%%%%%%%%%%%%%%%%%%
\tableofcontents
%%%%%%%%%%%%%%%%%%%%%%%%%%%%%%%%%%%%%%%%%%%%%%%%%%%%%%
\section{Introduction}
A {\bf Sierpi\'nski carpet} is a plane fractal created by Wac\l aw Sierpi\'nski in 1916. It is a continuum (i.e. a nonempty, compact, connected, metrizable topological space). Furthermore, it is a universal plane curve (i.e. it contains a homeomorphic copy of any subspace of $\RR^2$ with topological dimension $1$; see say \cite{Da}). Its fractal structure also allows both theoretical exploration of the structures such as \textit{Dirichlet forms} (see \cite{Bar}), and practical application in the field of communication (WiFi antenna, see for example \cite{Anten}).

Nevertheless, the carpet does not only attract interests from topologists, but also from dynamicists. Various studies on homeomorphisms on the carpet have been done over the years. In 1991 (see \cite{Ka}), Kato showed that the Sierpi\'nski curve does not admit an expansive homeomorphism. In addition, Aarts and Oversteegen were able to show that it is possible for carpet homeomorphisms to be transitive (see \cite{AO}). A construction of a homeomorphism given by Bi\'s, Nakayama and Walczak in \cite{BNW} verified that the carpet admits a homeomorphism with positive topological entropy. 

Recently, in 2018, Jan P. Boro\'nski and Piotr Oprocha introduced in their paper \cite{BO} a new way to construct a Sierpi\'nski curve. In particular, they built an inverse limit system induced from Arnold's cat map on the 2-torus $\TT^2$. The inverse limit was shown to be homeomorphic to the Sierpi\'nski carpet. In addition, the construction produced a function on the inverse limit which was proved to be a homeomorphism. One can say that the strength of Boro\'nski and Oprocha's inverse limit construction is that it produces both the Sierpi\'nski carpet and its homeomorphism. Another important point is that their construction is based on Whyburn's topological characterization of the Sierpi\'nski carpet (see \cite{why}). Precisely, one can obtain a Sierpi\'nski curve by deleting an infinite sequence of open discs $(D_i)_{i\geq 1}$ from the $2-$sphere $\SS^2$ where
\begin{itemize}
    \item $\overline{D_i} \cap \overline{D_j} = \emptyset$ for all $i \neq j$ (i.e. their closures are pairwise disjoint),
    \item $\text{diam}(D_j) \rightarrow 0$ as $j \rightarrow \infty$, and
    \item $\cup_{i=1}^\infty \overline{D_j}$ is dense on $\SS^2$.
\end{itemize}
This characterization allows a possibility to extend the inverse limit construction: the initial system \textit{Arnold's cat map on $\TT^2$} can possibly be replaced by a dynamical system $(X, T)$ which is a branched covering system of the $2-$sphere $\SS^2$. 

Arnold's cat map is an example of a homeomorphism belonging to the class of hyperbolic toral automorphisms. An analogue of hyperbolic toral automorphisms on the surface of genus one $\TT^2$ is the (linear) pseudo-Anosov diffeomorphisms on hyperelliptic translation surfaces of genus $g \geq 2$. These two classes of dynamical systems are branched coverings of $\SS^2$, and in fact, they can replace Arnold's cat map in the construction. Another family of translation surfaces \textit{$n-$Chamanara surface} and its homeomorphism \textit{$n-$baker map} turns out to be a potential candidate worth investigation.

An {\bf $\alpha-$Chamanara surface $C_\alpha$}, $\alpha \in (1, \infty)$, is an infinite genus translation surface introduced by Reza Chamanara in \cite{Cha}. Roughly, one can obtain the surface by assigning side identifications to $[0,1]^2$ as shown in Figure 1 below.
%-------------------------------------------
\begin{figure}[htp]
    \centering
    \includegraphics[width=6cm]{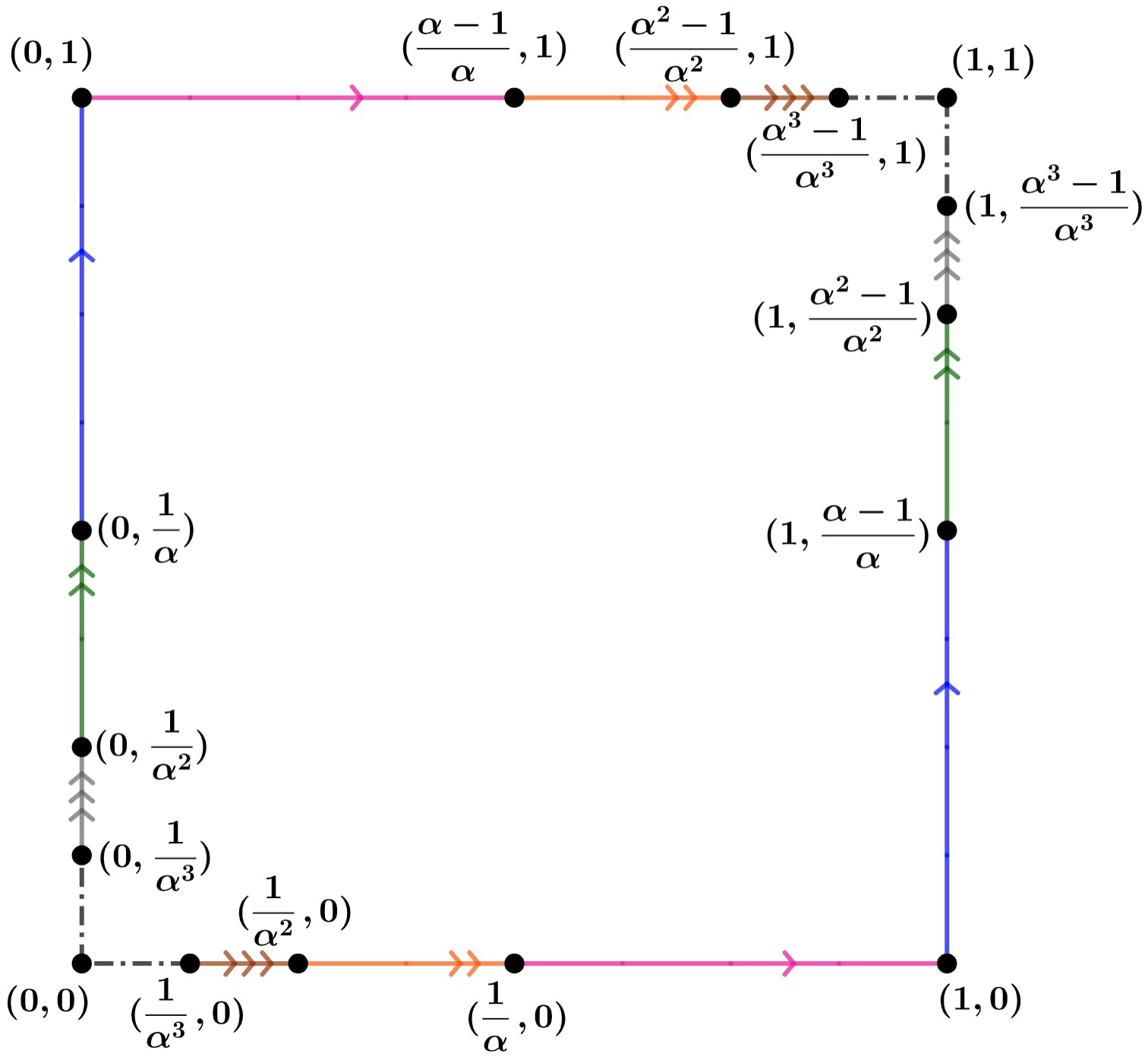}
    \caption{Side identifications impose on the square $[0,1]^2$. The quotient metric space with the single singular point $(0,0)$ is the $\alpha-$Chamanara surface, $\alpha \in (1, \infty)$.}
\end{figure}
%------------------------------------------
In the special case where $\alpha = n \in \NN$ with $n \geq 2$, the surface $n-$Chamanara $C_n$ admits a homeomorphism, the $n-$baker map $B_n$. This is a generalization of the well-known \textit{(2-)baker transformation $B_2$} defined on $[0,1)^2$ as
$$B_2(x,y) = \begin{cases}
(2x, y/2), & \mbox{if} \ 0 \leq x < 1/2, 0 \leq y < 1\\
(2x-1, \frac{1}{2}(y+1)), & \mbox{if} \ 1/2 \leq x < 1, 0 \leq y < 1.
\end{cases}$$
A rotation by $\pi$ radians $R$ centered at $(0.5,0.5)$ induces an equivalence relation $\sim_R$ on $C_n$. For the case $n=2$, Chamanara, Gardiner and Lakic verified in \cite{CGL} that the quotient space $C_2/\sim_R$ is a $2-$sphere $\SS^2$ (they point to an argument of de Carvalho and Hall in \cite{AT} where the result is an application of Moore's theorem) and there exists a homeomorphism on $C_2/\sim_R$ which commutes with $B_2$ via the projection $P_2 : C_2 \rightarrow C_2/\sim_R.$ We reprove this fact for $(C_2, B_2)$ and also extend the result to the pair $(C_n, B_n)$ for $n \geq 3$. Note that our approach is distinct from the one used in \cite{CGL}.
%----------------thm A--------------------
\begin{thmx}\label{thmx:00A}
For each $n \geq 2$, the dynamical system $(C_n, B_n)$ admits an equivalence relation $\sim_R$ given by the rotation by $\pi$ radians. The quotient space $Q_n := C_n/\sim_R$ is homeomorphic to $\SS^2$. The $n-$baker map descends down to $Q_n$ and induces a homeomorphism $T_n$ on $Q_n$ such that $P_n \circ T_n = B_n \circ P_n$ where $P_n : C_n \rightarrow Q_n$ is the natural projection.
\end{thmx}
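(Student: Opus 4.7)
The plan is to verify the three assertions in order, taking advantage of the square-with-identifications description of $C_n$.

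First, I would present $C_n$ as the square $[0,1]^2$ with the standard Chamanara side identifications: nested pairs of segments on the top/bottom (resp.\ left/right) edges are identified by translations, with the four corners collapsing to the singular point. Under the central rotation $R(x,y) = (1-x,1-y)$, each identified pair of segments is mapped as a set to another identified pair (a symmetry visible from the nesting), and the translation vectors are preserved under the swap, so $R$ descends to a well-defined involution $\bar R$ on $C_n$; equivalently, $\sim_R$ is well-defined and the natural projection $P_n : C_n \to Q_n$ exists.

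Second, $R$ commutes with $B_n$: for $(x,y) \in [i/n,(i+1)/n] \times [0,1]$, a direct calculation (using that $1-x$ lies in the strip indexed by $j = n-1-i$) shows that both $R \circ B_n$ and $B_n \circ R$ send $(x,y)$ to $(i+1-nx,\,(n-y-i)/n)$. Consequently, $B_n$ descends to a continuous map $T_n$ on $Q_n := C_n/\sim_R$ making the required semiconjugacy diagram commute; since $B_n$ is a homeomorphism and $\bar R$ is an involution, $T_n$ is also a homeomorphism.

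The principal step is the identification $Q_n \cong \SS^2$. My approach is to work directly inside $[0,1]^2$, using the half-square $F = [0,1] \times [0,1/2]$ as a fundamental domain for $\bar R$. Passing to the quotient, $F$ acquires two kinds of boundary identifications: the fold $(x,1/2) \sim (1-x,1/2)$ inherited from $R$, and a transported copy of the Chamanara identifications (each original top-with-bottom pair produces two subintervals on $\partial F$ to be paired). I would then read off a polygon presentation of $Q_n$ from this data and verify that it describes a closed orientable surface of genus $0$.

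The main obstacle is making this polygon-presentation argument rigorous in the presence of the infinite nesting of identifications and the singular point at the image of the four corners, while remaining independent of the Moore-theoretic reduction used in \cite{CGL}. My plan is to approximate $C_n$ by the finite-genus translation surfaces $C_n^{(k)}$ obtained by truncating the Chamanara identifications at depth $k$; each $C_n^{(k)}$ carries an $R$-action whose fixed-point count matches the hyperelliptic involution count $2g_k + 2$, so by a Riemann--Hurwitz computation the quotient $C_n^{(k)}/\bar R$ is a topological $2$-sphere. Passing to the direct limit and exploiting the self-similar structure of the identifications to control the quotient topology then transfers the identification with $\SS^2$ to $Q_n$.
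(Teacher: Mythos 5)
Your first two steps are sound, and your treatment of the descent of $B_n$ is a legitimate shortcut relative to the paper: instead of the explicit class-by-class verification that $\hat P_n B_n \hat P_n^{-1}(\{z\})$ is a singleton (Proposition \ref{prop:302}), you check $R\circ B_n = B_n\circ R$ on the open square and let continuity plus the well-definedness of $B_n$ on $C_n$ (which still requires the boundary analysis of Lemma \ref{lem:301}) propagate the commutation to all of $C_n$. That works, provided you say explicitly that the identity extends from the dense open set to the identified boundary by continuity of $\bar R$ and $B_n$ as maps of $C_n$.

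The genuine gap is in the step $Q_n\cong\SS^2$. Your truncations $C_n^{(k)}$ do not form a direct system in any useful sense: adding the depth-$(k{+}1)$ identification is not a further quotient of the capped-off surface $C_n^{(k)}$, so there are no natural continuous bonding maps $C_n^{(k)}\to C_n^{(k+1)}$, and a colimit of this sort would in any case carry the wrong (non-compact, non-metrizable) topology rather than the quotient topology of $Q_n$. The maps that do exist go the other way, $C_n^{(k+1)}\to C_n^{(k)}$ and $Q_n\to C_n^{(k)}/\bar R$, i.e.\ the natural object is an \emph{inverse} system, and the hard content is precisely the limit step you wave at with ``exploiting the self-similar structure to control the quotient topology.'' Two things must be proved there: (i) each bonding map, which collapses one additional identified arc to a point, is a near homeomorphism, so that Brown's theorem (Proposition \ref{prop:201}) identifies the inverse limit with $\SS^2$ --- this is Lemma \ref{lem:302} in the paper and needs an actual construction of approximating homeomorphisms shrinking the zip; and (ii) $Q_n$ itself, with its infinitely many identifications accumulating at the singular point, is homeomorphic to that inverse limit --- this is a separate universal-property argument (Proposition \ref{prop:301}) and does not follow from knowing each finite-stage quotient is a sphere. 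The Riemann--Hurwitz count for each truncation is fine as a way to see the finite stages are spheres, but without (i) and (ii) the argument does not close; note also that on $Q_n$ itself the involution has countably infinitely many branch points (Proposition \ref{prop:302}), so no Euler-characteristic argument applies directly to the limit object.
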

%-----------------------------------------
As one sees in the discussion of Boro\'nski and Oprocha's construction given in Section $2$, there are additional required conditions on the initial system $(X,T)$ besides factoring to give a system on $\SS^2$:
\begin{itemize}
    \item The set of periodic points of the map $T$, $\text{Per}(T),$ is dense in $X$.
    \item When the equivalence relation inducing the quotient sphere is a local isometry, the homeomorphism $T$ is an affine diffeomorphism.
\end{itemize}
It is a well-known result that any hyperbolic toral automorphism $F_A$ on $\TT^2$ and any pseudo-Anosov homeomorphism $T_\lambda$ on a hyperelliptic translation surface $X_g$ satisfy these conditions. For each $n \geq 2$, the pair $(C_n, B_n)$ is a factor of the system of \textit{bi-infinite sequence on $n$ symbols} and its \textit{left-shift map} $(\Sigma_n, \sigma_n)$. This gives that $\overline{\text{Per}(B_n)} = C_n$ which allows one to get the following conclusion.
%----------------------Thm B---------------
\begin{thmx}\label{thmx:00B}
Let $\Gamma := \{(\TT^2, F_A) \mid F_A: \TT^2 \rightarrow \TT^2 \ \text{is a hyperbolic toral automorphism}\},$  $\Omega := \{(X_g, T_\lambda) \mid T_\lambda : X_g \rightarrow X_g \ \text{is a pseudo-Anosov homeomorphism}\}, \Lambda := \{(C_n, B_n) \mid n \geq 2\}.$ 
Any $(Y, S) \in \Gamma \cup \Omega \cup \Lambda$ is a valid initial system to build an inverse limit system of the Sierpi\'nski carpet and its homeomorphism.
\end{thmx}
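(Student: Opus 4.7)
The plan is to verify, for each of the three families $\Gamma,\Omega,\Lambda$, the full list of hypotheses required by Boro\'nski and Oprocha's construction as extracted in Section~2, namely: (i) the system $(Y,S)$ factors to a dynamical system on $\SS^2$ via a projection induced by an involution that is a local isometry away from the branch set; (ii) $\overline{\Per(S)}=Y$; and (iii) $S$ is an affine diffeomorphism with respect to the natural translation (or flat) structure on $Y$. I would organize the write-up as three parallel sub-arguments, one for each family, each in the order (i)--(iii). The first two sub-arguments will appeal to classical facts; the novel content sits in the third family and is handled by Theorem \ref{thmx:00A} together with a symbolic-dynamics argument.

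For the family $\Gamma$, I would start from the observation that the hyperelliptic involution $\iota\colon x\mapsto -x$ on $\TT^2$ satisfies $F_A\circ\iota=\iota\circ F_A$ because $A\in\SL(2,\ZZ)$ commutes with $-I$; hence $F_A$ descends to the quotient, which is classically $\SS^2$ (the pillowcase), giving (i). Away from the four fixed points of $\iota$, the projection is a local isometry for the flat metric. For (ii), I would quote the standard fact that $\Per(F_A)=(\QQ/\ZZ)^2$, which is dense. For (iii), $F_A$ is $\ZZ$-linear, hence affine, so the condition is immediate.

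For the family $\Omega$, I would first recall that a hyperelliptic translation surface $X_g$ carries a canonical involution $\iota$ whose quotient is $\SS^2$ (with $2g+2$ branch points), and by the definition of a pseudo-Anosov $T_\lambda$ on a \emph{hyperelliptic} translation surface one has $T_\lambda\circ\iota=\iota\circ T_\lambda$; this gives (i), with local isometry away from the Weierstrass points. For (ii), I would cite the classical result (originally due to Thurston, and a staple of Teichm\"uller dynamics) that the periodic points of any pseudo-Anosov homeomorphism are dense in the surface. For (iii), (linear) pseudo-Anosov maps on translation surfaces are by construction affine diffeomorphisms with derivative in $\SL(2,\RR)$, which is exactly the statement required.

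For the family $\Lambda$, the projection to $\SS^2$ and the intertwining $P_n\circ T_n=B_n\circ P_n$ are handled directly by Theorem~\ref{thmx:00A}, which establishes (i). Moreover, $B_n$ is affine on each of its $n$ defining rectangles (it is piecewise linear with derivative $\diag(n,1/n)$), so (iii) holds on the complement of the negligible singular set, as in the pseudo-Anosov case. The remaining and most delicate step is (ii). Here I would use the coding map $\pi\colon\Sigma_n\to C_n$ semiconjugating the full two-sided shift $\sigma_n$ to $B_n$; since periodic sequences are dense in $\Sigma_n$ and $\pi$ is continuous and surjective, $\pi(\Per(\sigma_n))\subseteq\Per(B_n)$ is dense in $C_n$.

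I expect the main obstacle to be the bookkeeping around the singular loci: for $\Omega$ one must check that the local-isometry hypothesis of the Boro\'nski--Oprocha construction survives the presence of cone points, and for $\Lambda$ one must argue that the flat structure on $C_n$ together with the piecewise-affine map $B_n$ satisfies the regularity assumptions of the construction despite the single infinite-angle singularity at $(0,0)$. Once those compatibility checks are made, the theorem will follow by assembling (i)--(iii) and invoking the construction of Section~2.
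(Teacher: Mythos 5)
Your proposal is correct and follows essentially the same route as the paper: it verifies the checklist of required properties from Section 2 (branched covering of $\SS^2$ via an involution, density of periodic points, affine/radial-line-preserving behavior) family by family, citing classical facts for $\Gamma$ and $\Omega$ and reducing the case of $\Lambda$ to Theorem \ref{thmx:00A} together with the coding of $(C_n,B_n)$ as a factor of $(\Sigma_n,\sigma_n)$, exactly as the paper does via Lemma \ref{lem:301}, Proposition \ref{prop:301}, Corollary \ref{cor:301} and Proposition \ref{prop:302}. The only small caveat is that the commutation $T_\lambda\circ\iota=\iota\circ T_\lambda$ for the hyperelliptic case is not purely definitional and deserves a citation (the paper uses Lemma 2.3 of \cite{BL}), but this does not change the argument.
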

%-------------------------------------------
Furthermore, each of this topological dynamical system $(X, T)$ admits invariant measures for which we can calculate the metric entropy. 
%--------------------------THM C-------------
\begin{thmx}\label{thmx:00C}
The entropy values of the dynamical systems on the carpet built in Theorem \ref{thmx:00B} are
$$\log(|\lambda|), \log(\lambda') \ \text{or} \ -\sum_{i=0}^{n-1} p_i\log(p_i)$$
where $\lambda$ is the leading eigenvalue of a hyperbolic toral automorphism, $\lambda'$ is the expansion factor of a pseudo-Anosov diffeomorphism and $P = (p_0, p_1, \ldots, p_{n-1})$ is a probability vector (i.e. $0 \leq p_i <1$ for all $i$, and $\sum_{i=0}^{n-1} p_i = 1$). In particular, all positive real numbers are realized as entropy values of dynamical systems on the carpet.
\end{thmx}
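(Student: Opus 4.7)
The plan is to track metric entropy through the Boro\'nski-Oprocha inverse limit construction, apply standard entropy formulas to each of the three initial families, and finally exploit the flexibility of Bernoulli parameters on the $n$-Chamanara surfaces to realize every positive real. The first task is to verify that the metric entropy of the carpet homeomorphism $F$ equals the metric entropy of the initial system $(X,T)$ under a suitably chosen invariant measure $\mu$. This proceeds in three stages: (i) the factor map $P : (X,T) \to (\SS^2, T')$ is at most $2$-to-$1$, so the pushforward $\mu' = P_{*}\mu$ satisfies $h_{\mu'}(T') = h_\mu(T)$ by the Rokhlin formula for finite extensions; (ii) the blow-up of a countable orbit on $\SS^2$ into open disks is a semi-conjugacy whose fiber is a singleton off that orbit, hence preserves entropy provided $\mu'$ is atomless on the orbit; (iii) the natural shift on the inverse limit has the same entropy as the base map, a classical inverse-limit fact.

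I would then invoke standard entropy computations for each family. For $(\TT^2, F_A)$ with Haar measure, Pesin's formula (or a direct Markov-partition calculation) gives $h = \log|\lambda|$. For $(X_g, T_\lambda)$ with its invariant Lebesgue-type measure, Thurston's pseudo-Anosov theory (via Fathi-Laudenbach-Po\'enaru) gives $h = \log \lambda'$. For $(C_n, B_n)$, recall from the discussion preceding Theorem \ref{thmx:00B} that the baker map is a factor of $(\Sigma_n, \sigma_n)$ via a coding bijective off a measure-zero set; any Bernoulli measure $\mu_P$ on $\Sigma_n$ therefore pushes forward to $C_n$ with $h_{\mu_P}(B_n) = h_{\mu_P}(\sigma_n) = -\sum_{i=0}^{n-1} p_i \log p_i$.

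For the realizability claim, given any $h > 0$ I choose $n \geq 2$ with $\log n > h$. The entropy function $P \mapsto -\sum_{i=0}^{n-1} p_i \log p_i$ is continuous on the probability simplex, vanishes at its vertices, and equals $\log n$ at the uniform point; by the intermediate value theorem some $P$ attains the value $h$, yielding a carpet system of metric entropy $h$.

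The hard part will be the measure-theoretic bookkeeping in stages (i) and (ii). Specifically, the constructed invariant measure on the carpet must assign measure zero to the countably many blown-up disks and their orbits, so that entropy is genuinely preserved rather than only bounded above, and the variational identities hold in both directions. For $(C_n, B_n)$ this reduces to verifying that the pushed-forward Bernoulli measure has no atoms on the singular point $(0,0)$ and its $B_n$-orbit, which should follow because Bernoulli measures on $\Sigma_n$ have no atoms and hence neither do their projections through $P_n$ and the blow-up semi-conjugacy.
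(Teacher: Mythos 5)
Your proposal follows essentially the same route as the paper: push invariant measures forward through the quotient-sphere and blow-up stages, use the (a.e.) finite-to-one nature of the fibers to get entropy preservation in both directions (the paper invokes Ledrappier--Walters where you cite the equivalent finite-extension/Abramov--Rokhlin fact), quote the standard entropy values for the three initial families, and realize every $h>0$ by the intermediate value theorem applied to the Bernoulli entropy function on the simplex. The one cosmetic discrepancy is that the carpet is an inverse limit over the collapsing maps rather than a natural extension (so ``the natural shift on the inverse limit'' is a slight mislabel), but your stages (ii)--(iii) --- a.e.-singleton fibers plus atomlessness of the measure on the blown-up orbits --- are exactly the paper's device for showing the carpet system is measure-isomorphic to the quotient-sphere system.
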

%---------------------------------------------
Another advantage of Boro\'nski and Oprocha's construction is that each dynamical system on the carpet inherits various (topological) dynamical properties from the initial system. However, Boro\'nski and Oprocha showed that the system induced from Arnold's cat map loses the property called the {\bf Bowen specification property}. Their proof relies on two key facts:
\begin{itemize}
    \item A local behavior of orbits of points near a hyperbolic fixed point.
    \item A nontrivial result of Pfister-Sullivan on the density of ergodic measures in the space of invariant probability measures (see \cite{PS}). 
\end{itemize}
A simplified version of the proof of the failure of Bowen specification in \cite{BO}, without using Pfister-Sullivan's result \cite{PS}, is given as our final result. In fact, a weaker specification-like property called the {\bf approximate product property} is shown to fail on the systems on the carpet.
%---------------------------Thm D--------------
\begin{thmx}\label{thmx:00D}
Any dynamical system on the carpet constructed in this paper, regardless of its initial system, does not have the approximate product property.
\end{thmx}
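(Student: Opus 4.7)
The plan is to contradict the approximate product property (APP) by exhibiting a pair of orbit segments on the carpet $S$ whose joint shadowing would force an impossible transition across a blown-up boundary circle. Unlike the proof in \cite{BO}, which invokes Pfister--Sullivan's density of ergodic measures, the strategy here rests only on the local dynamics near such a circle together with the fractal topology of the carpet guaranteed by Whyburn's characterization.

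First I would locate the obstruction. Every $(X, T) \in \Gamma \cup \Omega \cup \Lambda$ admits a hyperbolic fixed point $p$, so the Boro\'nski--Oprocha construction produces an invariant boundary circle $\gamma \subset S$ on which $\hat T|_\gamma$ is a Morse--Smale circle homeomorphism obtained by projectivizing $DT(p)$: it has attracting fixed points $q_u^\pm \in \gamma$ at the unstable eigendirections and repelling fixed points $q_s^\pm \in \gamma$ at the stable eigendirections, and the local stable and unstable manifolds of $p$ lift to arcs in $S$ attached to $\gamma$ at $q_s^\pm$ and $q_u^\pm$ respectively. Fix $\varepsilon > 0$ much smaller than $\tfrac14 d_S(q_s^+, q_u^+)$, and arbitrary small parameters $\delta_1, \delta_2 > 0$. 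For each large $n$, pick $a_n$ on the stable arc with $d_S(\hat T^{n-1} a_n, q_s^+) < C\eta^n$, and $b_n$ on the unstable arc with $d_S(b_n, q_u^+) < C\eta^n$, where $\eta < 1$ is the local contraction rate. Set $\Ocal_a = (\hat T^i a_n)_{i=0}^{n-1}$ and $\Ocal_b = (\hat T^i b_n)_{i=0}^{n-1}$; these end (resp.\ start) arbitrarily close to $\gamma$.

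Next, suppose APP gives some $N$, and for $n > N$ produces $z \in S$ and $h \in [n, (1+\delta_1)n]$ such that $\hat T^i z$ is $\varepsilon$-close to $\hat T^i a_n$ on a fraction at least $1 - \delta_2$ of $i \in [0,n)$, and $\hat T^{h+i} z$ is $\varepsilon$-close to $\hat T^i b_n$ on a similarly large subset. A standard uniform-continuity argument (noting that indices in the complement of the error sets have density at least $1 - \delta_2$ near any target) extracts times $i^\ast$ near $n-1$ and $j^\ast$ near $h$ with $d_S(\hat T^{i^\ast} z, q_s^+) = O(\varepsilon)$, $d_S(\hat T^{j^\ast} z, q_u^+) = O(\varepsilon)$, and $j^\ast - i^\ast \leq \delta_1 n + O(\delta_2 n) + O(1)$. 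Moreover the shadowing of $\Ocal_a$ traps $\hat T^i z$ in a tube of radius $\varepsilon$ around the local stable manifold of $q_s^+$ for most $i \in [n/2, n)$, and the shadowing of $\Ocal_b$ traps $\hat T^{h+i} z$ near the local unstable manifold of $q_u^+$ for most $i \in [0, n/2)$. The contradiction should follow from the topological fact that in the carpet (as opposed to a smooth blow-up) the invariant circle $\gamma$ is accumulated by a dense sequence of other blown-up boundary circles coming from higher-period orbits of $T$; these form a fractal barrier between the two local manifolds, so any orbit that is approximately on the stable side near $q_s^+$ at time $i^\ast$ and approximately on the unstable side near $q_u^+$ at time $j^\ast$ must skirt infinitely many boundary circles in its transit, and a compactness/continuity argument produces a lower bound $M(\varepsilon) > 0$ on the required number of intermediate iterations that cannot be absorbed into the allowed gap $\delta_1 n + O(\delta_2 n)$ once $\delta_1, \delta_2$ are chosen small enough relative to $\varepsilon$.

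The main obstacle is precisely this last step: making the fractal separation across $\gamma$ into a quantitative lower bound on transit time that is uniform across the three families $\Gamma, \Omega, \Lambda$. A purely linearized local analysis at $p$ admits interpolating saddle orbits and therefore is insufficient; the argument must invoke the carpet's genuinely fractal neighborhood of $\gamma$ (infinitely many pairwise disjoint holes whose closures accumulate on $\gamma$), which is supplied by Whyburn's topological characterization recalled in the introduction and is what distinguishes the carpet dynamics from the smooth hyperbolic setting where APP does hold.
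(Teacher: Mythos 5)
Your overall strategy (avoid Pfister--Sullivan, derive a contradiction from the local dynamics at the blown-up circle of a hyperbolic fixed point) matches the paper's, but the specific obstruction you propose does not work, and the step you flag as ``the main obstacle'' is in fact a genuine gap that cannot be closed in the way you suggest. The pair of targets you choose --- $q_s^+$ on the stable eigendirection of the circle and $q_u^+$ on the unstable one --- is exactly the pair for which there is \emph{no} obstruction: a point whose $0$-th coordinate lies at distance $\varepsilon$ from the fixed point near the stable axis but slightly off the stable manifold is carried by the saddle linearization into an $\varepsilon$-neighborhood of the unstable axis, passing as close to the circle as one likes, in time on the order of $\log(1/\varepsilon)/\log\lambda$. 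This is a constant depending only on $\varepsilon$, so for large $n$ it is absorbed into the allowed gap of size $\delta_2 n$; such a point is a genuine element of $S_\infty$ (its $\Scal_0$-orbit generically avoids the blown-up orbits), so there is a real orbit in the carpet making the $q_s^+ \to q_u^+$ transit quickly. Your proposed rescue --- that the holes accumulating on $\gamma$ form a ``fractal barrier'' forcing a lower bound $M(\varepsilon)$ on the transit time --- is not a real mechanism: the holes are deleted open disks, the transit orbit just described lives in the carpet and passes between them, and nothing in Whyburn's characterization converts their density into a uniform-in-$n$ time cost that grows with $n$ (a constant $M(\varepsilon)$ would not suffice anyway, since the gap budget is $\delta_2 n \to \infty$).

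The paper's proof locates the obstruction elsewhere: it takes the two \emph{antipodal} points $z(1), z(2)$ where the \emph{contracting} direction meets the blown-up circle. Lemma \ref{lem:501} shows that in the inverse-limit metric a small ball about $z(1)$ projects into the sector $D_1$ around the positive stable axis while a ball about $z(2)$ projects into the opposite sector $D_3$; since (after passing to an even iterate) the linearization preserves the sign of the stable coordinate, an orbit cannot move from $D_1$ to $D_3$ without first exiting the whole square $D$, and Proposition \ref{prop:501} shows that an orbit which has sat in $D_1$ for roughly $8n/10$ steps needs roughly another $7n/10$ steps in $D_2 \cup D_4$ before it can exit. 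That linear-in-$n$ delay is what overwhelms the gap $\le n/100$ and yields the contradiction --- a purely linear-local estimate, with no appeal to the fractal structure of the carpet. To repair your argument you would need to replace the pair $(q_s^+, q_u^+)$ by $(q_s^+, q_s^-)$ and prove the two projection statements and the $1/2$-proportion bound; as written, the contradiction you aim for does not exist.
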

%-----------------------------------------------
\textit{Acknowledgements.} This paper is based on the Ph.D. dissertation of the author. The author expresses the deepest gratitude to Prof. Thomas Andrew Schmidt (Oregon State University) for his guidance and valuable suggestions given throughout the author's Ph.D. study. A part of results related to the pair $(C_2, B_2)$ is inspired by the talk given by Dr. Meyer on $27^{\text{th}}$ April 2021 in the Quasiworld seminar entitled: \textit{The Solenoid, the Chamanara Space,  and Symbolic Dynamics}. For this particular matter, the author would like to specially acknowledge Dr. Daniel Meyer (University of Liverpool).
%%%%%%%%%%%%%%%%%%%%%%%%%%%%%%%%%%%%%%%%%%%%%%%%%%%%%%%%%%
\section{Preliminaries}
The following notions will be used in this paper:\\
\begin{gross}
\item [$\TT^2$]  The surface of genus $1$ (the $2-$torus).
\item [$F_A$] The hyperbolic toral automorphism given by $F_A(z) = Az \pmod{1}$.
\item [$X_g$] A hyperelliptic translation surface of genus $g \geq 2.$
\item [$T_\lambda$] A pseudo-Anosov diffeomorphism with the expansion factor $\lambda > 1.$
\item [$C_n$]   The $n-$Chamanara surface, $n \geq 2$.
\item [$B_n$] The $n-$baker transformation, $n \geq 2$.
\item [$\Sigma_n$] The space of bi-infinite sequences on $n$ symbols $\{0, 1, \ldots, n-1\}.$
\item [$\sigma_n$] The left-shift homeomorphism on $\Sigma_n$.
\item [$\text{Per}(f)$] The set of periodic points of the function $f$.
\item [$\Bcal(X)$] The Borel $\sigma-$algebra on the topological space $X$.
\item [$h_\mu(f)$] The metric entropy of the function $f$ with respect to the measure $\mu$.
\item [$(S_\infty(T), H_\infty(T))$] The dynamical system on the Sierpi\'nski carpet constructed from the initial system $(X, T)$.
\end{gross}
\vspace{0.2cm}
%%%%%%%%%%%%%%%%%%%%%%%%%%%%%%%%%%%%%%%%%%%%%%%%%%%%%%
\subsection{Dynamical systems} We briefly introduce two notions of dynamical systems used in this paper. Refer to references as \cite{KH} and \cite{Ro} for a more thorough discussion.
\begin{itemize}
    \item {\bf (Topological dynamical systems)} A pair $(X,T)$ consisting of a topological space $X$ and a continuous map $T : X \rightarrow X$ is called a {\bf topological dynamical system}.
    \item {\bf (Measure-theoretic dynamical systems)} A quadruple $(X, T, \Fcal, \mu)$ consisting of a topological space $X$, a continuous function $T : X \rightarrow X$, a $\sigma-$algebra $\Fcal$ on $X$ and a measure $\mu : \Fcal \rightarrow [0, \infty)$ satisfying that $\mu(A) = \mu(T^{-1}(A)) \ \text{for all} \ A \in \Fcal \ \text{(this condition is called}\ \mu \ \textit{is} \ T-\text{invariant})$ is called a {\bf measure-theoretic dynamical system.} 
\end{itemize}
In the latter case, without specifying otherwise, we assume that $\Fcal = \Bcal(X)$. Also, we suppresses the words \textit{topological} or \textit{measure-theoretic} whenever possible. 

There is one particular technique of achieving a measure-theoretic structure for a given pair $(X,T)$ called a {\bf push-forward of a measure-theoretic structure}.
\begin{lemma}[A push-forward of a measure-theoretic structure] \label{lem:201} Let $(X, T)$ and $(Y, S)$ be dynamical systems. Assume that $(Y,S)$ is a factor of $(X,T)$ via $\pi : X \rightarrow Y.$ If in addition $(X, T, \Bcal(X), \mu_X)$ is a dynamical system, then there is an extension system $(Y, S, \Bcal(Y), \mu_Y)$, which is a factor of $(X, T, \Bcal(X), \mu_X)$, given by $\mu_Y := \mu_X \circ \pi^{-1}$.
\end{lemma}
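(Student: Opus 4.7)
The plan is to verify three things in order: that $\mu_Y := \mu_X \circ \pi^{-1}$ is a well-defined set function on $\Bcal(Y)$, that it is a measure, and that it is $S$-invariant. Throughout, I use only the standing hypotheses that $\pi : X \rightarrow Y$ is continuous and surjective with $\pi \circ T = S \circ \pi$, and that $\mu_X$ is a $T$-invariant Borel measure.

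First I would check well-definedness. Since $\pi$ is continuous, $\pi^{-1}(A) \in \Bcal(X)$ for every $A \in \Bcal(Y)$ (the preimage of open sets are open, and preimage commutes with countable unions, intersections, and complements, so the collection $\{A \subseteq Y : \pi^{-1}(A) \in \Bcal(X)\}$ is a $\sigma$-algebra containing the opens). Hence $\mu_Y(A) := \mu_X(\pi^{-1}(A))$ makes sense for all $A \in \Bcal(Y)$, and $\mu_Y(A) \in [0, \infty)$ because $\mu_X$ takes values there.

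Next I would verify the measure axioms. Clearly $\mu_Y(\emptyset) = \mu_X(\pi^{-1}(\emptyset)) = \mu_X(\emptyset) = 0$. For countable additivity, if $\{A_i\}_{i \geq 1}$ is a pairwise disjoint family in $\Bcal(Y)$, then $\{\pi^{-1}(A_i)\}_{i \geq 1}$ is pairwise disjoint in $\Bcal(X)$ (preimages of disjoint sets are disjoint), so
\[
\mu_Y\!\left(\bigcup_{i=1}^\infty A_i\right) = \mu_X\!\left(\pi^{-1}\!\left(\bigcup_{i=1}^\infty A_i\right)\right) = \mu_X\!\left(\bigcup_{i=1}^\infty \pi^{-1}(A_i)\right) = \sum_{i=1}^\infty \mu_X(\pi^{-1}(A_i)) = \sum_{i=1}^\infty \mu_Y(A_i).
\]

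Finally, I would establish $S$-invariance by a one-line computation using the factor relation $\pi \circ T = S \circ \pi$, which gives $(S \circ \pi)^{-1} = (\pi \circ T)^{-1}$, i.e., $\pi^{-1} \circ S^{-1} = T^{-1} \circ \pi^{-1}$. Then for any $A \in \Bcal(Y)$,
\[
\mu_Y(S^{-1}(A)) = \mu_X(\pi^{-1}(S^{-1}(A))) = \mu_X(T^{-1}(\pi^{-1}(A))) = \mu_X(\pi^{-1}(A)) = \mu_Y(A),
\]
where the third equality is the $T$-invariance of $\mu_X$. This shows $(Y, S, \Bcal(Y), \mu_Y)$ is a measure-theoretic dynamical system, and by construction $\pi$ is a measure-preserving factor map from $(X, T, \Bcal(X), \mu_X)$ onto it. There is no real obstacle here; the only minor subtlety worth flagging is confirming that $\pi^{-1}$ really sends Borel sets to Borel sets, which is where continuity of $\pi$ enters.
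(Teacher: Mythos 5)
Your proof is correct, and it is the standard verification: Borel measurability of $\pi^{-1}(A)$ from continuity, countable additivity from preimages respecting disjoint unions, and $S$-invariance from $\pi^{-1}\circ S^{-1}=T^{-1}\circ\pi^{-1}$ combined with $T$-invariance of $\mu_X$. The paper states Lemma \ref{lem:201} without proof, treating it as routine, so your write-up simply supplies exactly the argument the paper leaves implicit; there is no divergence to report.
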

Lemma \ref{lem:201} plays a crucial role in extending the pair $(C_n, B_n)$ for each $n \geq 2$. One shall also see that it is the first step to develop the result \textit{all positive real numbers can be achieved as entropy values of dynamical systems on the carpet}.

The main dynamical systems in this paper are $(C_n, B_n)$ for all $n \geq 2$. We also discuss briefly dynamical systems of $(\TT^2, F_A), (X_g, T_\lambda)$ and $(\Sigma_n, \sigma_n)$ for $g, n \geq 2.$ One can study definitions and facts regarding the pair $(\TT^2, F_A)$ in \cite{Ro}. Standard references for $(X_g, T_\lambda)$ include \cite{Zo} and \cite{FM}, with an exception that we prefer an equivalent definition of pseudo-Anosov diffeomorphisms stated in \cite{FSE}. For basic notions and results related to symbolic dynamics, consult \cite{KH}.
%%%%%%%%%%%%%%%%%%%%%%%%%%%%%%%%%%%%%%%%%%%%%%%%%%%%%
\subsection{Inverse limit systems}
An important result used both in \cite{BO} and here is Brown's inverse limit theorem (see \cite{MB}). It concerns a recognition of inverse limit systems via a notion of near homeomorphisms. 
%------------------------------------------------
\begin{definition}\label{def:201}
Let $(X_i)_{i \in \NN}$ be a sequence of compact metric spaces. Let $f_i : X_i \rightarrow X_{i-1}$ be a continuous map for all $i \geq 2$. The subspace $\text{Lim}(X_i, f_i)$ of the product space $\prod_{i=1}^\infty X_i$ defined as $$\text{Lim}(X_i, f_i) := \Big\{(u_i) \in \prod_{i=1}^\infty X_i : f_{ij}(u_j) := f_{i+1} \circ f_{i+2} \circ \cdots \circ f_j(u_j) = u_i \ \text{for all} \ i \leq j \ \text{with} \ f_{ii} = \text{id}_{X_i}\Big\}$$ is called the {\bf limit space} of the inverse system $(X_i, f_i).$
\end{definition}
%-------------------------------------------------
%-------------------------------------------------
\begin{definition}\label{def:202}
Let $(X,d)$ be a metric space. Then a map $f : X \rightarrow X$ is call a {\bf near homeomorphism} if for any $\epsilon > 0$, there exists a homeomorphism $h_\epsilon : X \rightarrow X$ such that $$ ||h_\epsilon - f|| := \sup_{x \in X}d(h_\epsilon(x), f(x)) < \epsilon.$$
\end{definition}
%------------------------------------------------
\begin{proposition} \label{prop:201} (Theorem $4$ in \cite{MB}) Let $(X_i)$ be a sequence of topological spaces. Assume that there exists a compact metric space $X$ such that $X_i$ is homeomorphic to $X$ for all $i \in \NN.$ If for all $i \geq 2, f_i : X_i \rightarrow X_{i-1}$ is a near homeomorphism, then $\text{Lim}(X_i,f_i)$ is homeomorphic to $X.$
\end{proposition}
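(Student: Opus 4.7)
The statement is Morton Brown's classical inverse limit approximation theorem, and my plan is to follow his original strategy. The idea is to approximate each near homeomorphism $f_i$ uniformly by an honest homeomorphism $h_i$, and then to build, by a carefully calibrated diagonal limit procedure, an explicit homeomorphism between $X$ and $\text{Lim}(X_i,f_i)$.

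First I would reduce to the case $X_i=X$ for every $i$: fix homeomorphisms $\eta_i\colon X_i\to X$, set $\tilde f_i:=\eta_{i-1}\circ f_i\circ\eta_i^{-1}$, and observe that each $\tilde f_i$ is again a near homeomorphism while the coordinate-wise map $(u_i)\mapsto(\eta_i(u_i))$ is a homeomorphism between the two inverse limits. So one may work with a fixed metric $(X,d)$ and near homeomorphisms $f_n\colon X\to X$. Inductively select a rapidly decreasing sequence of tolerances $\epsilon_n>0$ together with homeomorphisms $h_n\colon X\to X$ satisfying $\|h_n-f_n\|<\epsilon_n$; crucially, each $\epsilon_n$ is chosen only after $h_2,\ldots,h_{n-1}$ have been fixed, so that it can be taken small relative to the moduli of uniform continuity of those finitely many maps and of their inverses on the compact space $X$. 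With $H_n:=h_2\circ\cdots\circ h_n$, the inverse system $(X,h_n)$ of homeomorphisms has $\text{Lim}(X,h_n)$ canonically homeomorphic to $X$ via $x\mapsto (H_n^{-1}(x))_n$.

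The core of the argument is then to construct a homeomorphism $\Theta\colon X\to\text{Lim}(X,f_n)$ by a diagonal limit. For each coordinate $n$ one assembles continuous approximations $u_n^{(N)}\colon X\to X$, built by composing a finite string of $f_i$'s with inverses of the corresponding $h_i$'s, arranged so that the telescoping error between consecutive $N$'s is bounded termwise by the composite uniform continuity modulus applied to $\epsilon_{N+1}$. The calibration of the $\epsilon_n$ renders this sum finite, whence $u_n(x):=\lim_{N}u_n^{(N)}(x)$ exists uniformly in $x$; passing $f_{n+1}$ through the limit yields the thread compatibility $f_{n+1}\circ u_{n+1}=u_n$, so $\Theta(x):=(u_n(x))$ takes values in $\text{Lim}(X,f_n)$. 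A symmetric diagonal construction, sending a thread $(v_n)\in\text{Lim}(X,f_n)$ to the uniform limit of $H_N(v_N)\in X$, produces a continuous inverse. Since both spaces are compact Hausdorff, $\Theta$ is automatically a homeomorphism.

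The main obstacle is precisely the bookkeeping behind the tolerance sequence. The composite moduli of uniform continuity $\omega_{f_{n+1}\circ\cdots\circ f_N}$ that govern the telescoping estimates can deteriorate sharply as $N$ grows, and without control the diagonal limits need not converge. The saving grace, which is the engine of Brown's argument, is that at each inductive stage only finitely many maps have been fixed, and each of them, being a homeomorphism of a compact metric space, has a concrete modulus of uniform continuity on $X$; thus $\epsilon_n$ can be chosen in terms of these moduli so as to kill the next potential blowup. With this calibration in place, the remainder of the argument reduces to formal facts about uniform convergence and the identification of continuous bijections between compact Hausdorff spaces with homeomorphisms, yielding $\text{Lim}(X_i,f_i)\cong X$.
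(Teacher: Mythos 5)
The paper itself offers no proof of this proposition: it is imported verbatim as Theorem 4 of Brown's paper \cite{MB}, so there is no internal argument to compare yours against. What you have written is a faithful reconstruction of Brown's original strategy --- reduce to a constant system $X_i=X$, inductively calibrate tolerances $\epsilon_n$ against the finitely many maps already fixed, approximate each $f_n$ by a homeomorphism $h_n$ within $\epsilon_n$, identify $\mathrm{Lim}(X,h_n)$ with $X$, and pass between $\mathrm{Lim}(X,f_n)$ and $X$ by telescoping diagonal limits --- and the strategy is sound. Two points of bookkeeping deserve explicit attention if you write this out in full. First, the tolerance $\epsilon_{N+1}$ must be chosen small not only relative to the moduli of uniform continuity of $h_2,\dots,h_N$ and their inverses (which controls convergence of $H_N(v_N)$), but also relative to the moduli of the finite compositions $f_{n+1}\circ\cdots\circ f_N$ for all $n\le N$ (which controls the telescoping error in $u_n^{(N)}$); your text emphasizes the former, but both families are fixed before stage $N+1$, so the choice is possible. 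Second, invoking ``continuous bijection between compact Hausdorff spaces'' requires first verifying that $\Theta$ and the candidate inverse are genuinely mutually inverse, which is one more telescoping estimate of the same kind rather than a formality. Neither point is an obstruction; with them filled in, your outline is a complete proof of Brown's theorem.
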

%------------------------------------------------
%%%%%%%%%%%%%%%%%%%%%%%%%%%%%%%%%%%%%%%%%%%%%%%%%%%%%%%
\subsection{Boro\'nski and Oprocha's inverse limit construction} Core steps of Boro\'nski and Oprocha's inverse limit construction found in \cite{BO} are listed here. \\
{\bf Step 1:} We start with a \textit{system of Arnold's cat map on $\TT^2$}, $F_C(x,y)^t = \begin{pmatrix}
2 & 1\\
1 & 1
\end{pmatrix}(x,y)^t \pmod 1$. This is a well-known dynamical system with various intriguing properties: one of them is that \textit{it is a branched covering system of the $2-$sphere $\SS^2$}. In particular, one defines an equivalence relation $(x,y) \sim_{\TT^2} -(x,y)$ and forms a quotient space $\Scal_0 = \TT^2/\sim_{\TT^2}.$ It turns out that the quotient $\Scal_0$ is homeomorphic to $\SS^2$, and there exists a homeomorphism $H_0 : \Scal_0 \rightarrow \Scal_0$ such that $\pi \circ F_C = H_0 \circ \pi$ where $\pi : \TT^2 \rightarrow \Scal_0$ is the quotient map. Observe that there are four branch points $\Ccal = \{(0,0), (1/2,0), (0, 1/2), (1/2, 1/2)\}$ corresponding to $\sim_{\TT^2}.$\\
{\bf Step 2:} Since $\overline{\text{Per}(F_C)} = \TT^2$, $\overline{\text{Per}(H_0)} = \Scal_0$ via the semiconjugacy $\pi$. Moreover, using the facts that $\Scal_0$ is a connected second-countable Hausdorff topological space, one can decompose $\text{Per}(H_0)$ as $\text{Per}(H_0) = \Ocal \sqcup \Ocal'$ 
where $H_0(\Ocal) = \Ocal, H_0(\Ocal') = \Ocal', \Ocal \cap \pi(\Ccal) = \emptyset$ and $\overline{\Ocal} = \overline{\Ocal'} =\Scal_0.$\\
{\bf Step 3:} We further decompose $\Ocal = \sqcup_{i=1}^\infty O_i$ where each $O_i$ is a full orbit of the periodic point $z_i$ with the period length $n_i$. Then a technique called \textit{a blow up of a point on a (translation) surface} is applied. Roughly, given a point $c \in \Scal_0$, a {\bf blow up of a point $c$} is a pair $(\text{Blo}_c(\Scal_0), \pi_c)$ consisting of a topological space $\text{Blo}_c(\Scal_0)$ and a continuous function $\pi_c : \text{Blo}_c(\Scal_0) \rightarrow \Scal_0$ (called {\bf a collapsing map at $c$}) satisfying that 
\begin{itemize}
    \item $\pi_c^{-1}(\{c\})$ is a circle of directions centered at $c$ (i.e. a topological copy of the $1$-sphere $\SS^1$) denoted by $\SS^1(c)$,
    \item $\pi_c|_{(\text{Blo}_c(\Scal_0) \setminus\SS^1(c))}$ is a homeomorphism.
\end{itemize}
Basically, blowing up a point $c \in \Scal_0$ is a way to create a topological space from which an interior of a closed disc centered at $c$ is removed. The collapsing map $\pi_c$ gives suitable identifications of points between $\Scal_0$ and $\text{Blo}_c(\Scal_0)$.\\
{\bf Step 4:} We first blow up points in $O_1$ to create a space $\Scal_1$ and a continuous map $\pi_1 : \Scal_1 \rightarrow \Scal_0$. Via the collapsing map $\pi_1$ and the fact that \textit{$F_C$ preserves local radial lines}, a homeomorphism $H_1 : \Scal_1 \rightarrow \Scal_1$ satisfying that $\pi_1 \circ H_1 = H_0 \circ \pi_1$ is induced. \\
{\bf Step 5:} An induction based on Step 4 is performed. In fact, if a triple $(\Scal_k, H_k, \pi_k)$ consisting of a space $\Scal_k$, a homeomorphism $H_k : \Scal_k \rightarrow \Scal_k$ satisfying $\pi_k \circ H_k = H_{k-1} \circ \pi_k $ and a collapsing map $\pi_k : \Scal_k \rightarrow \Scal_{k-1}$ is already created by blowing up points belonging to $O_k$, then we blow up points in $O_{k+1}$ to create a triple $(\Scal_{k+1}, H_{k+1}, \pi_{k+1})$.\\
{\bf Step 6:} As a consequence, a limit set $$S_\infty(F_C) = \Big\{(u_i)_{i \geq 0} \in \prod_{i=0}^\infty \Scal_i \Big| \pi_j(u_j) = u_{j-1}\Big\}$$ is created. The result of Brown in Proposition \ref{prop:201} is applied to show that $S_\infty(F_C)$ is a Sierpi\'nski curve. A function $H_\infty(F_C) : S_\infty(F_C) \rightarrow S_\infty(F_C)$ defined by $$H_\infty(F_C)(u_i)_{i \geq 0} = (H_i(u_i))_{i \geq 0}$$ is a homeomorphism on $S_\infty(F_C)$.
\vspace{0.4cm}

As we seek to generalize this construction, we give a list of important properties needed in the construction:
\begin{itemize}
    \item One chooses \textit{an initial system $(X, T)$} with nice dynamical properties.
    \item In particular, the system $(X,T)$ admits an involution $\Ical$ on $X$ such that $(X, T)$ is a branched covering system of the $2-$sphere $\SS^2$ via $\Ical$. That is, the quotient $\Scal_0 = X/\Ical$ is homeomorphic to $\SS^2$, and there exists a homeomorphism $H_0$ on $\Scal_0$ such that $\pi \circ H_0 = T \circ \pi.$
    \item The set $\text{Per}(T)$ is dense in $X$ (and hence so is $\text{Per}(H_0)$ in $\Scal_0$).
    \item The homeomorphism $T$ preserves local radial lines. Note that any affine diffeomorphism of a plane possesses this property.
\end{itemize}

Since Arnold's cat map $F_C$ belongs to the class of hyperbolic toral automorphisms, it is not hard to believe, and indeed it is easily shown, that any $F_A$ satisfies all the requirements. Though less trivial, any pair $(X_g, T_\lambda)$ have all required properties as well (we discuss this formally in Subsection 3.1). We check that each pair $(C_n, B_n), n \geq 2,$ has all these properties in Subsection 3.2.
%%%%%%%%%%%%%%%%%%%%%%%%%%%%%%%%%%%%%%%%%%%%%%%%%%%%%%%
\subsection{Metric entropy of dynamical systems} This paper deals with a metric entropy of a given system $(X, T, \Bcal(X), \mu_X)$. In particular, the two following propositions below are sufficient to obtain entropy values of dynamical systems on the carpet. The first one is Proposition 4.3.16 in \cite{KH}. 
%--------------------------------------------------
\begin{proposition}\label{prop:202} Given dynamical systems $(X, T, \Bcal(X), \mu_X)$ and $(Y, S, \Bcal(Y),$ $\mu_Y)$ such that the latter is a metric factor of the former system, then $h_{\mu_Y}(S) \leq h_{\mu_X}(T).$ In addition, if the two systems are isomorphic, then $h_{\mu_Y}(S) = h_{\mu_X}(T).$
\end{proposition}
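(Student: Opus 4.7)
The plan is to reduce the claim to the variational/partition definition of metric entropy and transport partitions across the factor map. Recall that for a system $(X, T, \Bcal(X), \mu)$, one sets $h_\mu(T) = \sup_\xi h_\mu(T, \xi)$, where $\xi$ ranges over finite $\Bcal(X)$-measurable partitions and $h_\mu(T, \xi) = \lim_{n \to \infty} \frac{1}{n} H_\mu\!\left( \bigvee_{i=0}^{n-1} T^{-i} \xi \right)$ with $H_\mu$ the Shannon entropy $-\sum_{A \in \xi} \mu(A) \log \mu(A)$. Let $\pi : X \to Y$ denote the factor map, so that $\pi \circ T = S \circ \pi$ and $\mu_Y = \pi_* \mu_X = \mu_X \circ \pi^{-1}$, as in Lemma \ref{lem:201}.

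The first step is, given an arbitrary finite $\Bcal(Y)$-measurable partition $\eta$ of $Y$, to form the pullback $\pi^{-1} \eta := \{\pi^{-1}(A) : A \in \eta\}$, which is a finite $\Bcal(X)$-measurable partition of $X$ (modulo $\mu_X$-null sets). Two identities then drive the argument. First, by the push-forward relation $\mu_X(\pi^{-1} A) = \mu_Y(A)$, the Shannon entropies match:
\[
H_{\mu_X}(\pi^{-1} \eta) = H_{\mu_Y}(\eta).
\]
Second, the semiconjugacy gives $T^{-i}(\pi^{-1} \eta) = \pi^{-1}(S^{-i} \eta)$, so the joins are compatible with pullback:
\[
\bigvee_{i=0}^{n-1} T^{-i}(\pi^{-1} \eta) = \pi^{-1}\!\left( \bigvee_{i=0}^{n-1} S^{-i} \eta \right).
\]
Combining these two facts yields $h_{\mu_X}(T, \pi^{-1} \eta) = h_{\mu_Y}(S, \eta)$.

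Taking the supremum over $\eta$ on the right and noting that the partitions $\pi^{-1} \eta$ form a subfamily of the finite measurable partitions of $X$, one obtains
\[
h_{\mu_Y}(S) = \sup_{\eta} h_{\mu_Y}(S, \eta) = \sup_{\eta} h_{\mu_X}(T, \pi^{-1} \eta) \leq \sup_{\xi} h_{\mu_X}(T, \xi) = h_{\mu_X}(T),
\]
which proves the inequality. For the isomorphic case, the factor map $\pi$ is a measurable isomorphism mod zero, so every finite measurable partition of $X$ agrees (up to a $\mu_X$-null set) with a pullback $\pi^{-1} \eta$; consequently the supremum on the right is realized entirely by pullbacks, giving the reverse inequality and hence equality.

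The only delicate step is checking that the definition of \emph{metric factor} being used matches the standard measure-theoretic one (Borel-measurable, measure-preserving, defined $\mu_X$-almost everywhere), so that the pullback partition really is a legitimate object in the partition definition of entropy. Once this is pinned down against the definitions in the cited reference \cite{KH}, the remainder is bookkeeping with Shannon entropy and the limit defining $h_\mu(T, \xi)$.
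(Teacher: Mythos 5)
Your argument is correct. Note, however, that the paper does not actually prove this proposition --- it is quoted verbatim as Proposition 4.3.16 of \cite{KH} --- so there is no in-paper proof to compare against; your pullback-partition argument is precisely the standard proof of that cited result. The two identities you isolate ($H_{\mu_X}(\pi^{-1}\eta)=H_{\mu_Y}(\eta)$ from $\mu_Y=\mu_X\circ\pi^{-1}$, and the commutation of joins with $\pi^{-1}$ from the semiconjugacy) are exactly what is needed, and for the isomorphism case your observation that $\pi^{-1}\Bcal(Y)$ exhausts $\Bcal(X)$ mod null sets works; an even quicker route is simply to apply the factor inequality in both directions using $\pi^{-1}$.
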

%---------------------------------------------------
 Given that the entropy $h_{\mu_X}(T)$ is known, one obtains an upper bound $h_{\mu_Y}(S) \leq h_{\mu_X}(T)$. In the special case when $X$ is a compact metric space, the work of F. Ledrappier and P. Walters in \cite{LW} gives a lower bound for $h_{\mu_Y}(S)$. In many cases, the lower bound is obtained and is equal to $h_{\mu_X}(T)$ itself implying that $h_{\mu_Y}(S) = h_{\mu_X}(T).$
%-----------------------------------------------------
\begin{definition}\label{def:203}
Let $(X, d)$ be a metric space and $T : X \rightarrow X$ be uniformly continuous. 
\begin{itemize}
    \item A set $F$ is said to {\bf $(n,\epsilon)-$span} a set $K$ if for each $x \in K$, there is $y \in F$ satisfying $d(T^j(x), T^j(y)) \leq \epsilon$ for all $0 \leq j < n.$
    \item For a compact set $K \subseteq X$, let $r_n(\epsilon, K)$ be the smallest cardinality of any set $F$ which $(n, \epsilon)-$spans $K$. Then define $\overline{r}_T(\epsilon, K) := \limsup_{n \rightarrow \infty} \frac{\log r_n(\epsilon, K)}{n}.$
    \item For $K \subseteq X$ compact, define $h(T,K) = \lim_{\epsilon \rightarrow 0} \overline{r}_T(\epsilon, K).$
\end{itemize}
\end{definition}
%------------------------------------------------------
%------------------------------------------------------
\begin{proposition}\label{prop:203} (Ledrappier-Walters \cite{LW})
Let $X$ and $Y$ be compact metric spaces. Let $T : X \rightarrow X$ and $S : Y \rightarrow Y$ be continuous. Assume that there is a continuous map $\pi : X \rightarrow Y$ such that $\pi \circ T = S \circ \pi$. Then $$\sup_{\mu : \mu \circ \pi^{-1} = \nu}h_\mu(T) = h_\nu(S) + \int_Y h(T, \pi^{-1}(y)) \ d\nu(y).$$ Additionally, if $|\pi^{-1}(y)| < \infty$ for all $y \in Y$, then $$\sup_{\mu : \mu \circ \pi^{-1} = \nu} h_\mu(T) = h_\nu(S).$$
\end{proposition}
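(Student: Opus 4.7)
The plan is to prove the two inequalities of the variational equality separately and then derive the finite-fiber statement as an immediate corollary. For the upper bound, I would apply the Abramov-Rokhlin relativization
$$h_\mu(T) = h_\nu(S) + h_\mu(T \mid \pi^{-1}\Bcal(Y))$$
to any $T$-invariant Borel probability $\mu$ with $\mu \circ \pi^{-1} = \nu$, reducing the problem to bounding the conditional entropy by $\int_Y h(T, \pi^{-1}(y))\,d\nu(y)$. Fixing $\epsilon > 0$ and a finite Borel partition $\xi$ of $X$ with mesh at most $\epsilon$, the key combinatorial observation is that for $\nu$-a.e.\ $y$ the number of cells of $\bigvee_{i=0}^{n-1} T^{-i}\xi$ meeting the fiber $\pi^{-1}(y)$ is at most $r_n(\epsilon, \pi^{-1}(y))$, since points in distinct cells are automatically $(n, \epsilon)$-separated within the fiber. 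Concavity of $-x\log x$ converts this into an upper bound on the fiber-conditional Shannon entropy; dividing by $n$, taking $\limsup$, and integrating against $\nu$ (using Fatou together with the uniform bound by topological entropy to justify the interchange) yields the required estimate for $h_\mu(T, \xi \mid \pi^{-1}\Bcal(Y))$. Supping over $\xi$ and letting $\epsilon \to 0$ finishes the upper bound.

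For the reverse inequality I would construct, for each $\epsilon > 0$ and each large $N$, a measure on $X$ with marginal $\nu$ by choosing, via a Borel measurable selection, a nearly maximal $(N,\epsilon)$-separated set in each fiber $\pi^{-1}(y)$, placing the uniform probability on it, and integrating these conditional measures against $\nu$. Ces\`aro-averaging along the $T$-orbit and passing to a weak-$*$ subsequential limit produces a $T$-invariant $\mu$ still projecting to $\nu$ whose conditional entropy is at least $\int_Y \overline{r}_T(\epsilon, \pi^{-1}(y))\,d\nu(y)$ up to an error vanishing with $\epsilon$; a second application of Abramov-Rokhlin then shows the supremum on the left attains $h_\nu(S) + \int_Y h(T, \pi^{-1}(y))\,d\nu(y)$ in the limit. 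The finite-fiber assertion is immediate, since $r_n(\epsilon, \pi^{-1}(y)) \leq |\pi^{-1}(y)|$ for every $n$ forces $\overline{r}_T(\epsilon, \pi^{-1}(y)) = 0$ and hence $h(T, \pi^{-1}(y)) = 0$.

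The delicate step is the lower bound: the measurable selection of fibrewise near-optimal spanning sets and the Ces\`aro averaging needed to pass from a sequence of approximate maximizers to an honest $T$-invariant measure with the right conditional entropy both require care, particularly in verifying that invariantization does not deplete the conditional entropy that the construction built into the fibers. The counting step underlying the upper bound is, by contrast, a tweak of the classical argument that relates Shannon entropy to topological complexity of partitions, and should be routine once the relativization is in place.
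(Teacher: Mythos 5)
The paper does not prove this proposition: it is quoted directly from Ledrappier and Walters \cite{LW}, so there is no in-paper argument to compare yours against. Judged on its own terms, your outline follows the standard architecture of the original proof (the Abramov--Rokhlin relativization for the upper bound, a Misiurewicz-style construction of near-maximizing invariant measures for the lower bound), and your deduction of the finite-fiber case from the main identity is correct: $r_n(\epsilon,\pi^{-1}(y)) \leq |\pi^{-1}(y)|$ does force $\overline{r}_T(\epsilon,\pi^{-1}(y)) = 0$ and hence $h(T,\pi^{-1}(y)) = 0$.

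However, the ``key combinatorial observation'' driving your upper bound is false as stated. Two points lying in distinct cells of $\bigvee_{i=0}^{n-1} T^{-i}\xi$ need not be $(n,\epsilon)$-separated: they can sit on opposite sides of a cell boundary and be arbitrarily close in the Bowen metric, so the number of cells meeting a fiber is not bounded by $r_n(\epsilon,\pi^{-1}(y))$. Already for $T = \mathrm{id}$ on $[0,1]$ with $\xi = \{[0,1/2), [1/2,1]\}$ there are two cells meeting the space but $r_1(1/2,[0,1]) = 1$. The honest comparison between cell counts and spanning numbers introduces a multiplicative factor of the form $C^n$, where $C$ bounds how many cells of $\xi$ a dynamical $\epsilon$-ball can meet, and the resulting additive error $\log C$ in the entropy rate does not vanish as $\epsilon \to 0$. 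This is precisely where Ledrappier and Walters have to work: their upper bound passes through open covers and a conditional-entropy lemma rather than raw cell counting. The lower bound you describe is the right idea but remains a plan --- you flag the invariantization difficulty yourself without resolving it --- so as written the proposal is a roadmap with one broken step rather than a proof.
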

%-------------------------------------------------------
%%%%%%%%%%%%%%%%%%%%%%%%%%%%%%%%%%%%%%%%%%%%%%%%%%%%%%
\subsection{The Bowen specification property} Bowen specification property is a \textit{strong shadowing property} introduced by Rufus Bowen in 1971 (see \cite{Bow}). Its usefulness has been evident among dynamicists, especially in the field of hyperbolic dynamics. See \cite{KLO} for a great overview on this topic. We define formally here only two specification-like properties: the Bowen specification property and the approximate product property.
%--------------------------------------------
\begin{definition}\label{def:204}
Let $(X, d)$ be a compact metric space and let $T : X \rightarrow X$ be a continuous surjective map. 
\begin{itemize}
    \item The pair $(X, T)$ has the {\bf (Bowen) specification property} if for any $\epsilon > 0$, there exists a positive integer $N(\epsilon)$ such that for any integer $s \geq 2$, given any $s$ points $y_1, y_2, ..., y_s \in X$ and any sequence of $2s$ integers $0 = j_1 \leq k_1 < j_2 \leq k_2 < ... < j_s \leq k_s$ satisfying $j_{m+1} - k_m \geq N$ for all $m = 1, 2, ..., s-1$, there exists a point $x \in X$ (called a shadowing point) with the property for each positive integer $m \leq s$, $$d(T^i(x), T^i(y_m)) < \epsilon$$ for all $i = j_m, j_m+1, \ldots, k_m.$ 
    \item The pair $(X,T)$ has the {\bf approximate product property} if for all $\epsilon > 0, \delta_1 >0, \delta_2 > 0$, there exists $N = N(\epsilon, \delta_1, \delta_2) \in \NN$ such that for any $n \geq N$ and for any sequence of points $(x_i)_{i \geq 1} \subseteq X$, there are (a sequence of gaps) $(h_i)_{i \geq 1} \subseteq \NN_0$ satisfying that $n \leq h_{i+1} - h_i\leq n+n\delta_2$ and a point $x \in X$, for each $i \in \NN$ $$|\{0 \leq j \leq n-1 : d(T^{h_i+j}(x), T^j(x_i)) \geq \epsilon\}| < \delta_1n. $$ 
\end{itemize}
\end{definition}
Observe that compactness together with the Bowen specification imply the approximate product property (see \cite{PS}). The converse is false as discussed in \cite{KLO}. Another important fact is that the Bowen specification is invariant under a topological semiconjugacy. Since $(\Sigma_n, \sigma_n)$ and any subshift of finite type have the Bowen specification (see say \cite{Si}), hyperbolic toral automorphisms $F_A$, pseudo-Anosov diffeomorphisms $T_\lambda$ and the $n$-baker map $B_n$ all have the Bowen specification (the first two are factors of some shifts of finite type, and the latter, as we soon prove, is a factor of $(\Sigma_n, \sigma_n)$). So it is a reasonable question to ask if a dynamical system on the carpet built from an initial system with the specification still has the specification. The answer is negative for the systems induced from $(\TT^2, F_A), (X_g, T_\lambda)$ or $(C_n, B_n)$ (see Section 5). 
%-----------------------------------------
%%%%%%%%%%%%%%%%%%%%%%%%%%%%%%%%%%%%%%%%%%%%%%%%%%%%%%
\section{The pair $(C_n, B_n)$ as a branched covering system of $\SS^2$}
%%%%%%%%%%%%%%%%%%%%%%%%%%%%%%%%%%%%%%%%%%%%%%%%%%%%%%
\subsection{A brief remark on $(\TT^2, F_A)$ and $(X_g, T_\lambda), g \geq 2$} The result for a pair $(\TT^2, F_A)$ is immediate as all arguments and proofs in \cite{BO} using Arnold's cat map $F_C$ remain valid when one replaces $F_C$ by any $F_A$. 

For the pair $(X_g, T_\lambda)$, we collect here all necessary facts: 
\begin{itemize}
    \item The set $\text{Per}(T_\lambda)$ is dense in $X_g$ (see Proposition 9.20 in \cite{FLP}).
    \item It is from the definition of $X_g$ that it admits an involution $\Ical : X_g \rightarrow X_g$ such that $\Scal_0 = X_g/\Ical$ is a $2-$sphere.  Moreover, Lemma 2.3 in \cite{BL} gives that there is a homeomorphism $H_0$ on $\Scal_0$ which properly commutes with $T_\lambda$.
    \item The fact that $T_\lambda$ being an affine diffeomorphism gives that it preserves radial lines locally.
\end{itemize}
As a result, one concludes that both families of $(\TT^2, F_A)$ and $(X_g, T_\lambda)$ are valid as initial systems for Boro\'nski and Oprocha's inverse limit construction. 
%%%%%%%%%%%%%%%%%%%%%%%%%%%%%%%%%%%%%%%%%%%%%%%%%%%%%%
\subsection{The proof of the first main result}
We now verify the first main result corresponding to Theorem \ref{thmx:00A} and \ref{thmx:00B} above. We present the proof in three parts. The first part is to show that each pair $(C_n, B_n)$ is a factor of $(\Sigma_n, \sigma_n)$. This establishes that $\overline{\text{Per}(B_n)} = C_n$ and a way to obtain a measure-theoretic structure for the pair $(C_n, B_n)$ (which we discuss later in Section 4). After this we form a quotient space $Q_n$ of $C_n$ by an involution of a rotation by $\pi$ radians about $(0.5, 0.5)$. The second part is devoted to showing that $Q_n$ is a $2-$sphere. The last part verifies an existence of a homeomorphism $T_n$ on $Q_n$ commuting with $B_n$.
%%%%%%%%%%%%%%%%%%%%%%%%%%%%%%%%%%%%%%%%%%%%%%%%%%%%%
\subsubsection{The pair $(C_n, B_n)$ as a factor of $(\Sigma_n, \sigma_n)$}
Let $n \geq 2$ be fixed. We begin by listing each element $\omega = \overline{(x,y)} = \overline{(0.x_1x_2\ldots_n, 0.y_1y_2\ldots_n)} \in C_n$ explicitly 
$$\omega = \begin{cases}
\{\omega\} &\mbox{if} \ \omega \in (0,1)^2,\\
\{(0, 0.y_1y_2\ldots_n), (1, 0.y_1^-y_2\ldots_n)\} &\mbox{if} \ (x,y) \in I_1 \cup I_1', \\
\{(1, 0.n^-n^-\ldots n^-y_ky_{k+1}\ldots_n), (0, 0.00\ldots0y_k^+y_{k+1}\ldots_n)\} &\mbox{if} \ (x,y) \in I_k \cup I_k', k \geq 2 \\
\{(0.x_1x_2\ldots_n, 0), (0.x_1^-x_2\ldots_n, 1)\} &\mbox{if} \ (x,y) \in J_1 \cup J_1',\\
\{(0.n^-n^-n^-\ldots n^-x_kx_{k+1}\ldots_n, 1), (0.000\ldots0x_k^+x_{k+1}\ldots_n, 0)\} &\mbox{if} \ (x,y) \in J_k \cup J_k', k \geq 2
\end{cases}$$ where $ a^- := a - 1, a^+ := a+1, 0.a_1a_2\ldots_n := \sum_{i=1}^\infty \frac{a_i}{n^i}$, 
$$\overline{(0,0)} = \{(0,0), (1,1), (1,0), (0,1)\} \cup \Big\{\Big(0, \frac{1}{n^k}\Big), \Big(\frac{1}{n^k}, 0\Big), \Big(1, 1 - \frac{1}{n^k}\Big), \Big(1 - \frac{1}{n^k}, 1\Big) : k \in \NN\Big\} \ \text{and for} \ i \geq 1,$$ $$I_i =\{0\} \times \Big(\frac{1}{n^{i}}, \frac{1}{n^{i-1}}\Big), I_i' = \{1\} \times \Big(1-\frac{1}{n^{i-1}}, 1-\frac{1}{n^i}\Big), J_i = \Big(\frac{1}{n^{i}}, \frac{1}{n^{i-1}}\Big) \times \{0\}, J_i' = \Big(1-\frac{1}{n^{i-1}}, 1-\frac{1}{n^i}\Big) \times \{1\}.$$
%-------------------------------------------
\begin{figure}[htp]
    \centering
    \includegraphics[width=5cm]{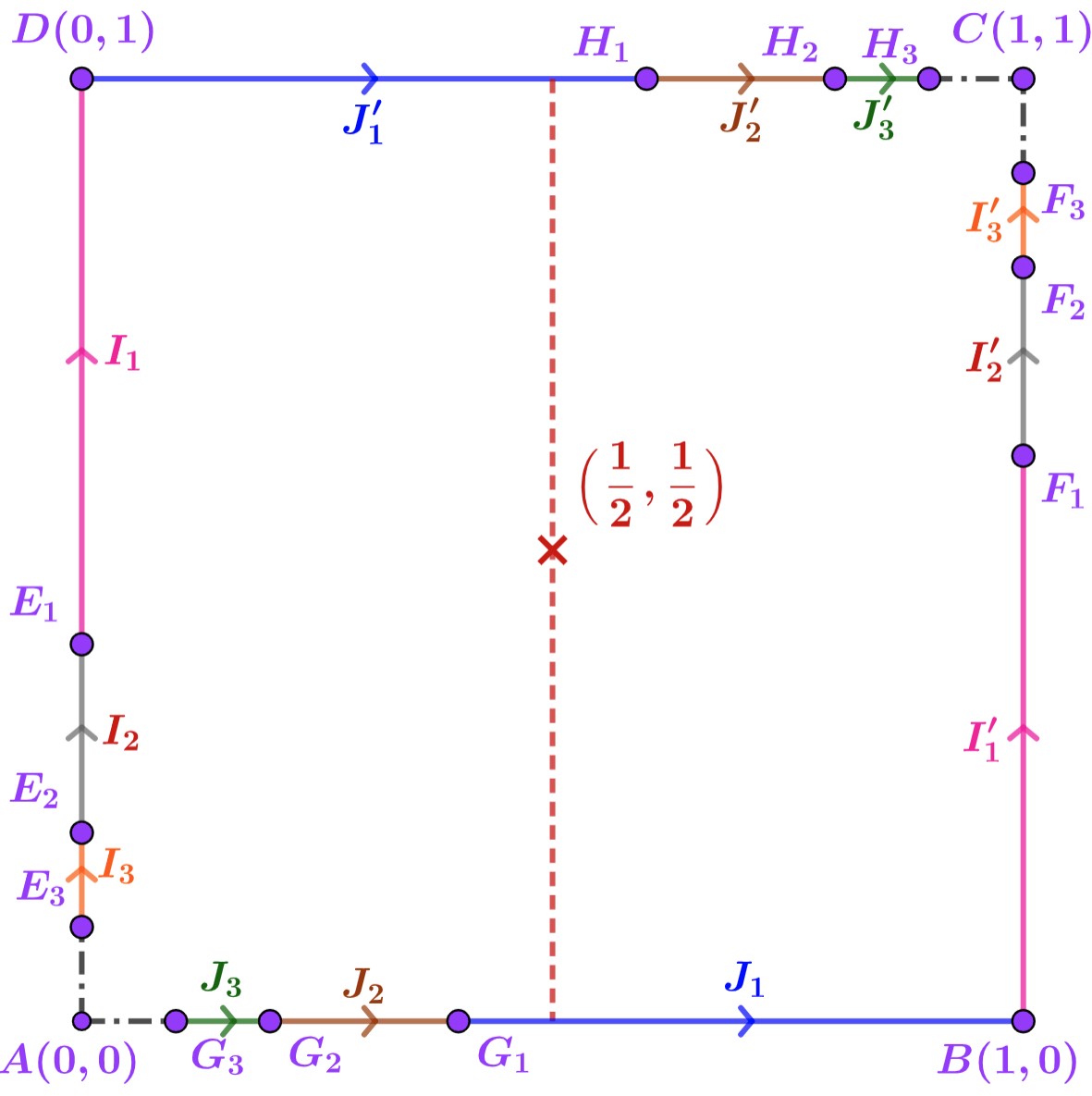}
    \caption{The surface $C_n$ described using identifications on $I_i, I_i', J_i$ and $J_j'$.}
\end{figure}
%------------------------------------------
One observes that for $(0,y) \in I_1$, $y = 0.y_1y_2\ldots_n$ where $y_1 \in \{1, 2, \ldots, n-1\}.$ The identification to $I_1'$ translates the $y$-coordinate down by $\frac{1}{n} = 0.1_n$. So $(0,y)$ is identified to $(1,y - 0.1_n) = (1, y_1^-y_2\ldots_n) \in I_1'.$ As for $k \geq 2,$ let $z = (0,y) \in I_k$. Then $y = 0.y_1y_2\ldots y_{k-1}y_k\ldots_n$ where $y_1 = y_2 = \ldots = y_{k-1} = 0$ and $y_k \in \{1, 2, \ldots, n-1\}.$ One sees that $n^{k-1}z$ shifts $z \in I_k$ to $n^{k-1}z \in I_1$.
Then $n^{k-1}z = (0,0.y_k\ldots_n)$ is identified with $w = (1, y_{k-1}^-y_k\ldots_n) \in I_1'.$ After that the $y-$coordinate of $w$ is retracted by $n^{1-k}$. It is lastly translated up to $I_k'.$ This gives that $z = (0.00\ldots0y_ky_{k+1}\ldots_n) \ \text{is identified with} \ (1, 0.n^-n^-\ldots n^-y_k^-y_{k+1}\ldots_n) \in I_k'.$
\vspace{0.4cm}

An {\bf n-baker map} is a function $B_n : [0,1)^2 \rightarrow [0,1)^2$ defined by $ B_n(x,y) = \Big(nx-(k-1), \frac{y+(k-1)}{n}\Big)$
for $\frac{k-1}{n} \leq x < \frac{k}{n}$ for each $k = 1, 2, \ldots, n.$ The definition of $B_n$ naturally extends to $C_n$ since $[0,1)^2$ is a fundamental domain of $C_n.$ We now verify that $(C_n, B_n)$ is a factor of $(\Sigma_n, \sigma_n)$. Then, as an immediate consequence, $\text{Per}(B_n)$ is dense in $C_n$.
%-----------------------lem 3-1 --------------------
\begin{lemma}\label{lem:301}
The function $B_n$ is a homeomorphism on $C_n$. The topological dynamical system $(C_n, B_n)$ is a factor of $(\Sigma_n, \sigma_n)$ via the map $P_n : \Sigma_n \rightarrow C_n$ defined by $$P_n(b_m;a_n) = \overline{(0.a_1a_2\ldots_n, 0.b_1b_2\ldots_n)}.$$ Consequently, $\text{Per}(B_n)$ is dense in $C_n.$
\end{lemma}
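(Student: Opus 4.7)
The plan is to prove all three claims using base-$n$ digit representations, which translate $B_n$ and $\sigma_n$ into the same operation. Writing $x = 0.a_1a_2\ldots_n$ and $y = 0.b_1b_2\ldots_n$ and noting that the condition $(k-1)/n \le x < k/n$ is equivalent to $a_1 = k-1$, the definition of $B_n$ becomes the digit shift
\[
B_n(x,y) \;=\; (0.a_2a_3\ldots_n,\; 0.a_1 b_1 b_2\ldots_n),
\]
and $\sigma_n$ acts on $(b_m;a_n) \in \Sigma_n$ by moving the comma one place to the right, which on digits is exactly the same operation. Hence once $P_n$ is known to be well-defined and $B_n$ is known to descend to $C_n$, the semiconjugacy $B_n\circ P_n = P_n\circ \sigma_n$ will follow by inspection.

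The first substantive task is to verify that $B_n$ is a homeomorphism of $C_n$. I would produce an explicit inverse $B_n^{-1}(x,y) = (0.b_1 a_1 a_2\ldots_n,\; 0.b_2 b_3\ldots_n)$ from the horizontal strip decomposition of $[0,1)^2$, which confirms $B_n$ is a set-theoretic bijection. Continuity on the interior of each vertical strip is clear; the remaining work is to check that $B_n$ extends continuously across the identifications on $I_k, I_k', J_k, J_k'$ and at the singular point $\overline{(0,0)}$. For instance, an interior sequence $(x_m, y_m)$ with $x_m \to 0^+$ and $y_m \to y$ for some $y$ in the relative interior of $I_k$ satisfies $x_m < 1/n$ eventually, so $B_n(x_m, y_m) = (nx_m, y_m/n) \to (0, y/n)$, and the extra leading zero moves $y/n$ from $I_k$ to $I_{k+1}$, consistent with the identifications; the cases for $I_k'$ (leading $n-1$'s), $J_k$, $J_k'$, and the orbit $\overline{(0,0)}$ are analogous.

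The second task is to verify that $P_n$ is a continuous surjection respecting the equivalence. Well-definedness reduces to the fact that the only ambiguity in the base-$n$ expansion of a real in $[0,1]$ occurs at rationals of the form $p/n^k$, and the enumeration of equivalence classes at the start of the subsection is organized precisely so that the two candidate expansions produce $C_n$-equivalent representatives. Continuity is the standard symbolic estimate: if two sequences agree on indices $|i| \le N$, then their $x$- and $y$-coordinates differ by at most $n^{-N}$, so their images converge in the quotient topology of $C_n$. Surjectivity is immediate since every point in the fundamental domain $[0,1)^2$ admits a base-$n$ expansion.

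Finally, density of $\text{Per}(B_n)$ is an automatic corollary: periodic bi-infinite sequences are dense in $\Sigma_n$, the semiconjugacy carries them to periodic points of $B_n$, and the continuous surjection $P_n$ gives $\overline{\text{Per}(B_n)} \supseteq P_n(\overline{\text{Per}(\sigma_n)}) = P_n(\Sigma_n) = C_n$. The main obstacle will be the boundary verification in the first task, particularly along the telescoping families $I_k, J_k$ accumulating at the singular point; once organized by digit patterns this check is routine but tedious.
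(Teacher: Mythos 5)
Your proposal is correct and follows essentially the same route as the paper: the paper likewise realizes $B_n$ and $B_n^{-1}$ by pasting the affine branches over closed vertical (resp.\ horizontal) strips, proves continuity of $P_n$ through the factorization $\Sigma_n \to [0,1]^2 \to C_n$ with the same $n^{-k}$ estimate, verifies the semiconjugacy by the digit-shift identity (split into cases along the seams and the boundary identifications), and obtains density of $\mathrm{Per}(B_n)$ by pushing forward the dense periodic points of $\sigma_n$. The only item your continuity checklist omits is the agreement of the two branch formulas along the interior lines $\{j/n\}\times(0,1)$, whose images coincide only via the $I_1\leftrightarrow I_1'$ identification in the target; this is a routine digit check of exactly the kind you already describe for the boundary segments.
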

\begin{proof}
Define vertical strips $\Ecal_i$ and vertical right lines $\Vcal_j$ as, for each $1 \leq i \leq n, 1 \leq j \leq n-1$,
$$\Ecal_i = \Big(\frac{i-1}{n}, \frac{i}{n}\Big) \times (0,1) \ \text{and} \ \Vcal_j = \Big\{\frac{j}{n}\Big\} \times (0,1).$$
Then for each $1 \leq i \leq n$, define $f_i$ on $\overline{\Ecal_i}$ by $f_i(x,y) = \big(nx-(i-1), \frac{y+(i-1)}{n}\big).$ Then $f_i$ is continuous on $\overline{\Ecal_i}$ for all $1 \leq i \leq n.$ Since $\overline{\Ecal_j} \cap \overline{\Ecal_{j+1}} = \Vcal_i$ for $1 \leq j \leq n-1$, and $f_j(\Vcal_j) = f_{j+1}(\Vcal_j)$ by the identifications on $I_1$ and $I_1'$, $B_n$ is well-defined and continuous on $C_n$. Analogously, one uses the identifications on $J_1$ and $J_1'$ together with an introduction of suitable horizontal strips and horizontal upper lines to show that $B_n^{-1}$ is well-defined and continuous on $C_n.$

To show the continuity of $P_n$, note that $P_n = q_n \circ p_n$ where $p_n : \Sigma_n \rightarrow [0,1]^2, q_n : [0,1]^2 \rightarrow C_n$.
It then suffices to show that $p_n(a_m;b_n) = (0.a_1a_2\ldots_n, 0.b_1b_2\ldots_n)$ is continuous, which is immediate from the fact that if $d_{\Sigma_n}((a_m;b_n), (c_m;d_n)) \leq 2^{-k}$, then 
\begin{align*}
    \Big(\sum_{i=1}^\infty \frac{a_i-c_i}{n^i}\Big)^2 + \Big(\sum_{i=1}^\infty \frac{b_i-d_i}{n^i}\Big)^2 = \Big(\sum_{i > k} \frac{a_i-c_i}{n^i}\Big)^2 + \Big(\sum_{i > k} \frac{b_i-d_i}{n^i}\Big)^2 \leq 2 \Big(\sum_{i \geq k+1}\frac{n-1}{n^i}\Big)^2 = \frac{2}{n^{2k}}.
\end{align*}
Lastly, we verify the commutativity of the diagram. For $0 \leq i \leq n-1$, let
$$\VV_i = \{(y_m;x_n) : x_1 = i, x_j = n-1 \ \mbox{for all} \ j \geq 2 \} \cup \{(y_m;x_n) : x_1 = i+1, x_j = 0 \ \mbox{for all} \ j \geq 2\}.$$
Define 
$\EE_i = \{(y_m;x_n) \in \Sigma_n : x_1 = i\} \setminus (\VV_{i-1} \cup \VV_i)$ and $\EE_0 = \{(y_m;x_n) \in \Sigma_n : x_1 = 0\} \setminus (\VV_0 \cup \{(y_m;x_n) : x_n = 0 \ \mbox{for all} \ n \in \NN\})$ for each $1 \leq i \leq n-1$. One regards these $\EE_i$ and $\VV_j$ as notions of vertical strips and vertical right lines in the space $\Sigma_n.$ 
Note that $\Sigma_n = \sqcup_{i = 0}^{n-1} (\EE_i \sqcup \VV_i) \sqcup \{(y_m;x_n) : x_n = 0 \ \mbox{for all} \ n \in \NN\}.$\\
Let $z = (y_m;x_n) \in \Sigma_n.$ \\
{\bf Case 1:} Assume that $z \in \EE_k$ for some $0 \leq k \leq n-1$. \\
Then $B_nP_n(z) = B_n(0.kx_2\ldots_n, 0.y_1y_2\ldots_n) = (0.x_2x_3\ldots_n, 0.ky_1y_2\ldots_n) = P_n\sigma_n(z).$\\
{\bf Case 2:} Assume $z \in \VV_k$ for some $0 \leq k \leq n-2$.\\
Then 
$$
B_nP_n(z) = 
B_n(0.k^+_n, 0.y_1y_2\ldots_n) = (0, 0.k^+y_1y_2\ldots_n) = P_n(\ldots y_2y_1k^+;0\ldots0\ldots) = P_n\sigma_n(z), \ \text{and}
$$ 
\begin{align*}
    B_nP_n(w) &= B_n(0.kn^-\ldots n^-\ldots_n, 0.y_1y_2\ldots_n) = (0.n^-\ldots n^-\ldots_n, 0.ky_1y_2\ldots_n)\\
    &= P_n(\ldots y_2y_1k;n^-\ldots n^-\ldots) = P_n\sigma_n(w)
\end{align*}
where $z = (\ldots y_2y_1;k^+0\ldots 0 \ldots), w = (\ldots y_2y_2;kn^-\ldots n^-\ldots)$.\\
{\bf Case 3:} Assume that $z = (y_m;0)$. \\
Then $B_nP_n(z) = B_n(0, 0.y_1y_2\ldots_n) = (0, 0.0y_1y_2\ldots_n) = P_n\sigma_n(z).$\\
{\bf Case 4:} Assume that $z = (y_m;n^-\ldots n^-\ldots)$\\
Then $B_nP_n(z) = B_n(0.n^-\ldots n^-\ldots_n, 0.y_1y_2\ldots_n) = (0.n^-\ldots n^-\ldots_n, 0.n^-y_1y_2\ldots_n) = P_n\sigma_n(z).$\\
So the system $(C_n, B_n)$ is a factor of $(\Sigma_n, \sigma_n)$.
\end{proof}
%--------------------------------------------
%%%%%%%%%%%%%%%%%%%%%%%%%%%%%%%%%%%%%%%%%%%%%%%%%%%%%
\subsubsection{The quotient $Q_n$ is a $2-$sphere $\SS^2$}
As mentioned earlier, an involution $R$ on $[0,1]^2$ is a rotation by $\pi$ radians about the point $(0.5, 0.5)$. It is a straight-forward calculation to see that $C_n$ admits an equivalence relation $\omega \sim_R R(\omega).$ In fact, for $\omega_1 = \{(0, 0.y_1y_2\ldots_n), (1, 0.y_1^-y_2\ldots_n)\} \subseteq I_1 \cup I_1'$,
$$R(0,0.y_1y_2\ldots_n) = (1, 0.y_1^*y_2^*\ldots_n) \sim_{C_n} (0,0.(y_1^*)^-y_2^*\ldots_n) = (0,0.(y_1^-)^*y_2^*\ldots_n) = R(1,0.y_1^-y_2\ldots_n)$$ and for $\omega_2 = \{(1, 0.n^-n^-\ldots n^-y_ky_{k+1}\ldots_n), (0, 0.00\ldots0y_k^+y_{k+1}\ldots_n)\} \subseteq I_k \cup I_k' $ for some $k \geq 2,$
$$R(1, 0.n^-\ldots n^-y_k\ldots_n) = (0, 0\ldots 0 y_k^*\ldots_n) \sim_{C_n} (1, n^-\ldots n^-(y_k^+)^*\ldots_n) = R(0, 0.0\ldots 0y_k^+\ldots_n)$$ where $a^* := n^- - a = (n-1) - a.$ 

So we form a quotient space $Q_n := C_n/\sim_R$, which is now shown to be homeomorphic to $\SS^2$. The base case of $Q_2$ is first verified. We recall that this result was already known in \cite{CGL}, but the verification here is done via a new approach of building a certain inverse limit system which is proved to be a $2-$sphere using Brown's work in Proposition \ref{prop:201}.
\vspace{0.4cm}

The quotient $Q_2$ can be visualized by side identifications on $[0,0.5] \times [0,1]$ as shown below.
%-------------------------------------------
\begin{figure}[htp]
    \centering
    \includegraphics[width=6cm]{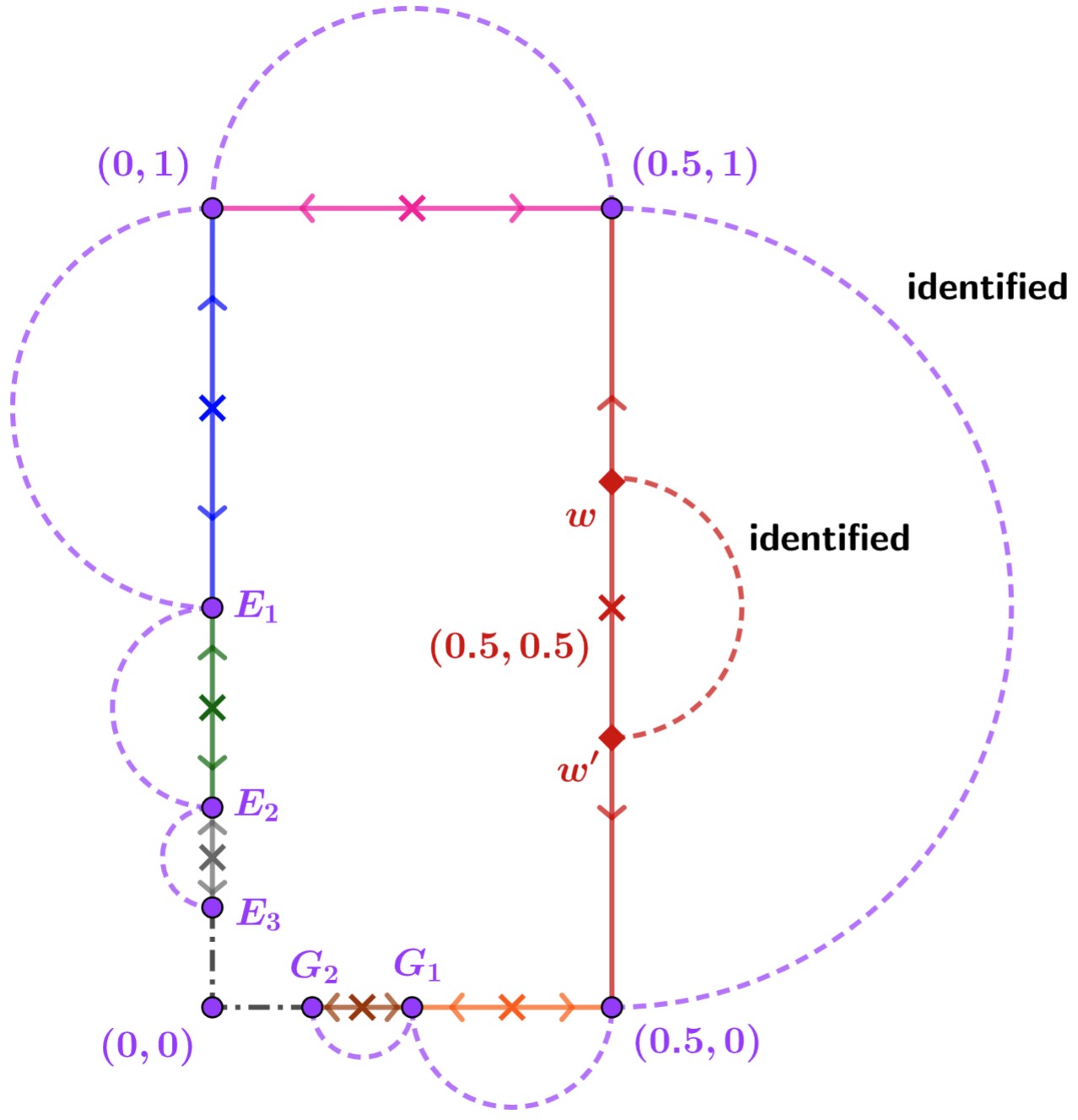}
    \caption{Side identifications on the boundary of $[0,0.5] \times [0,1]$ results in the quotient $Q_2$. One should compare the identifications and notations with the Figure 2.}
\end{figure}\\
%------------------------------------------
To show that $Q_2 \cong \SS^2$, we adopt the idea of \textit{zips} similar both to that used in Conway's zip proof of the classification of closed surfaces (see \cite{FW}) and to Veech's notion of zippered rectangles (see say \cite{Zo}). In particular, a zip represents an equivalence relation imposed on a certain part of a boundary. With the zip notation, one sees that the quotient $Q_2$ is homeomorphic to a $2-$sphere which is zipped up countably infinitely many times along the single (singular) vertex point, denoted by $\SS^2(Q_2)$ (see Figure 4 below). It suffices to show that $\SS^2(Q_2) \cong \SS^2.$
%--------------------------------------------
\begin{figure}[htp]
    \centering
    \includegraphics[width=8cm]{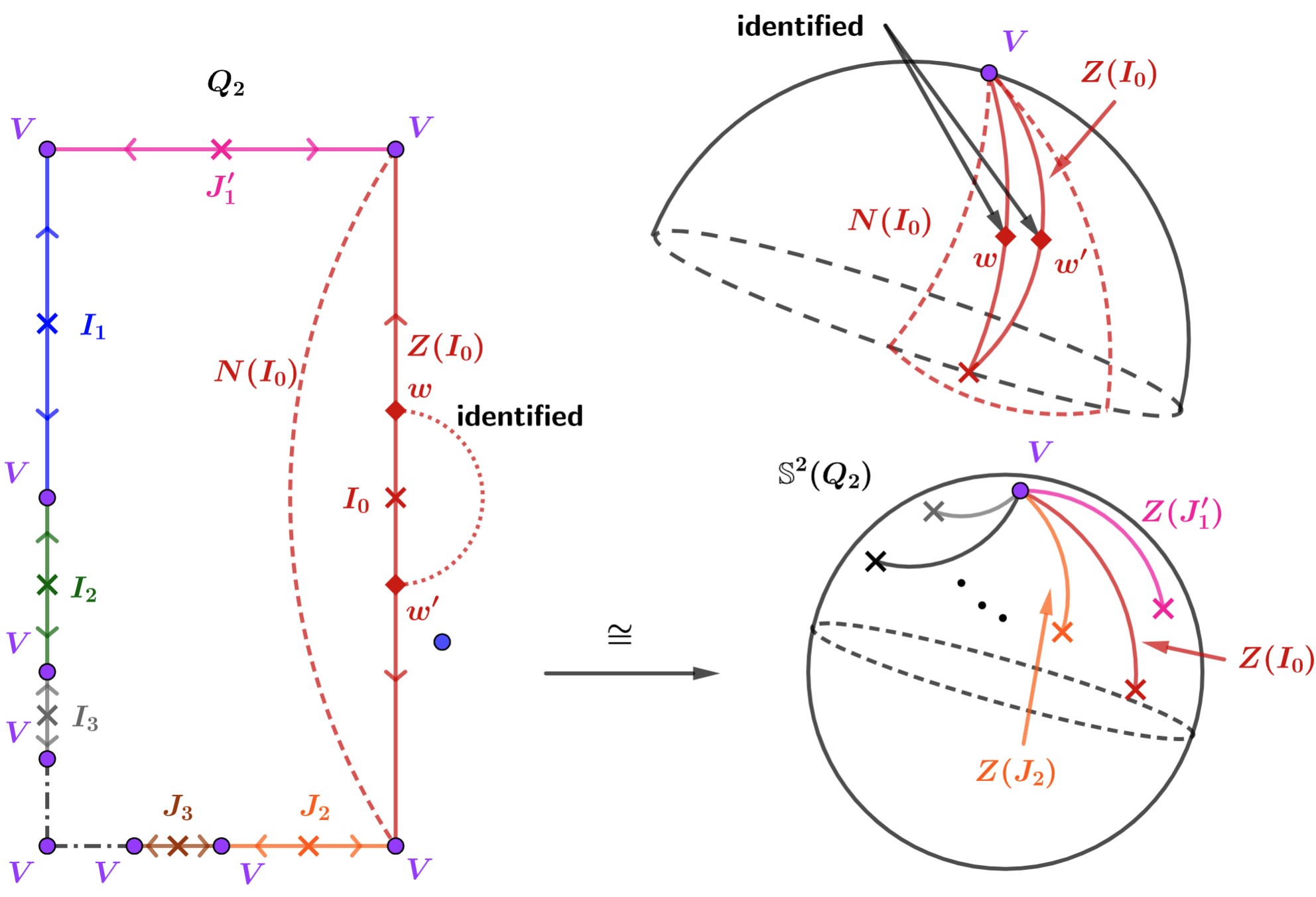}
    \caption{The side identification of $I_0$ is \textit{zipping up} along the edge of $I_0$. A neighborhood $N(I_0)$ of the zip $Z(I_0)$ is shown in the top right figure. Hence, the space $Q_2$ is homeomorphic to the 2-sphere with countably infinitely many zips of diminishing sizes, $\SS^2(Q_2)$.}
    \end{figure}
%------------------------------------------------

Observe that each zip $Z(P)$ has its own neighborhood called $N(P)$. These neighborhoods are pairwise disjoint $N(P) \cap N(Q) = \emptyset$ for zips $P \neq Q$. With this observation, we build an inverse limit system as follows.\\
{\bf Step 1:} Let $M_0$ be a $2-$sphere obtained by collapsing the boundary of $[0,0.5] \times [0,1]$ to a single point. Next, we create $M_1$ as a quotient space of $[0,0.5] \times [0,1]$ obtained by imposing the equivalence relation $\sim_R$ on $I_0$ and collapse $\partial([0,0.5]\times [0,1]) \setminus (I_0 \setminus \partial(I_0))$ to a single point. That is, $M_1$ is a $2-$sphere with a zip representing the relation imposed on $I_0$.\\
{\bf Step 2:} The space $M_2$ is a $2-$sphere with two zips created by imposing the relation $\sim_R$ on $I_0$ and $J_1'$, and collapsing $\partial([0,0.5] \times [0,1]) \setminus ((I_0 \setminus \partial(I_0)) \cup (J_1' \setminus \partial(J_1')))$ to a single point.\\
{\bf Step 3:} In general, we inductively create a new space $M_{k+1}$ from the already created space $M_k$ by imposing the relation $\sim_R$ to an additional part of $\partial([0,0.5] \times [0,1])$, and collapsing the rest of the boundary part to a single point. In particular, Table $1$ shows a pattern of imposing the relation $\sim_R$ to an additional part of the boundary for the first five steps.
\begin{table}[ht]
\begin{center}
\begin{tabular}{ |c|c|c|c|c|c|c| } 
\hline
{\bf Step $k^{th}$ of the construction} & {\bf $0^{th}-$step} & {\bf $1^{st}-$step} & {\bf $2^{nd}-$step} & {\bf $3^{rd}-$step} & {\bf $4^{th}-$step} & {\bf $5^{th}-$step}  \\
\hline
{\bf The $\sim_R$ is imposed on} & - & $I_0$ & $J_1'$ & $I_1$ & $J_2$ & $I_2$ \\
\hline
{\bf The created space} & $M_0$ & $M_1$ & $M_2$ & $M_3$ & $M_4$ & $M_5$ \\
\hline
\end{tabular}
\end{center}
\vspace{5pt}

\caption{A pattern of imposing $\sim_R$ to parts of $\partial([0,0.5] \times [0,1])$ for the first five steps.}
\end{table}\\
{\bf Step 4:} As a result, we have a sequence of topological spaces $(M_i)_{i \geq 0}$ such that each pair of consecutive spaces differ by a zip. The last requirement to build an inverse limit system of $(M_k)$ is to define a suitable continuous map $h_k : M_k \rightarrow M_{k-1}$ for $k \in \NN.$ The facts that each consecutive pair of spaces differ by a zip, called it $Z(P)$, and each zip $Z(P)$ has it own neighborhood $N(P)$ allow us to define the map $h_k$ locally in $N(P)$. That is, a map is the identity outside of $N(P)$. We give a precise definition below.

A function $h : \overline{N(P)} \rightarrow \overline{N(P)}$ which collapses the zip $Z(P)$ is defined as follows:\\ 
Let $h(Z(P)) = \{[(0,0)]\}$ and $h(z) = z$ for all $z \in \partial N(P).$ Then we define $h$ on each radial arc by scaling its length corresponding to how close it is to the zip $Z(P)$. The resulting function $h$ satisfies that
\begin{itemize}
    \item $h$ is a homeomorphism on $N(P)\setminus Z(P)$ and $h$ is continuous on $N(P)$,
    \item $h$ fixes the boundary $\partial N(P),$
    \item but, $h$ is not injective on $Z(P)$.
\end{itemize}
%------------------------------------------
 \begin{figure}[htp]
    \centering
    \includegraphics[width=7cm]{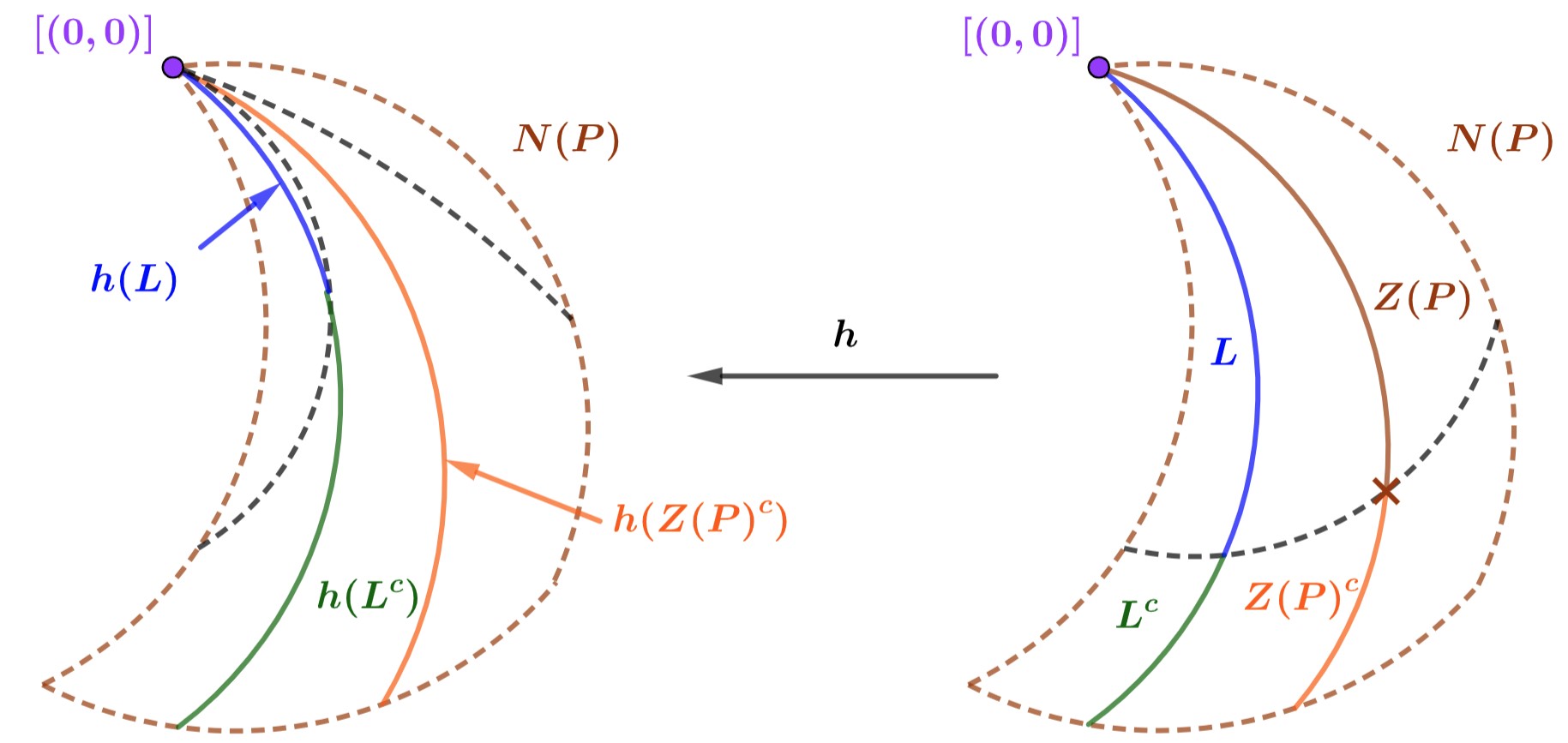}
    \caption{The continuous function $h$ collapses the zip $Z(P)$\,.}
    \end{figure}
%-------------------------------------------
%--------------------------------------
\begin{lemma}\label{lem:302} The continuous function $h : \overline{N(P)} \rightarrow \overline{N(P)}$ is a near homeomorphism.
\end{lemma}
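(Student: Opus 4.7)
The plan is to verify Definition 2.0.2 directly: for each $\epsilon > 0$, I must produce a homeomorphism $h_\epsilon : \overline{N(P)} \to \overline{N(P)}$ with $\|h_\epsilon - h\| < \epsilon$. Since $h$ is already a homeomorphism on $N(P) \setminus Z(P)$, with the only failure of injectivity being the collapse of the arc $Z(P)$ to the single point $[(0,0)]$, the map $h$ is a monotone (cellular) quotient of a compact $2$-manifold. Such maps are predicted to be near homeomorphisms by Bing's shrinking criterion, and the task is to supply an explicit approximation tailored to the radial structure of $h$.

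The first step is to localize the non-injectivity inside a small disk. For each $\eta > 0$, I would choose an open disk neighborhood $U \subset \overline{N(P)}$ of $Z(P)$ with $\overline{U} \cap \partial N(P) = \emptyset$, $\diam(U) < \eta$, and $h(\overline{U}) \subset B([(0,0)], \eta)$, arranged so that the boundary $\partial U$ is mapped homeomorphically by $h$ onto the Jordan curve $\partial h(\overline{U})$. Such a $U$ can be cut out of a tubular neighborhood of $Z(P)$ using the radial foliation underlying the definition of $h$: any disk whose boundary is transverse to the radial arcs works, and continuity of $h$ at $Z(P)$ makes its image arbitrarily small. Collapsing an arc in a disk gives back a disk, so $h(\overline{U})$ is itself topologically a disk bounded by the Jordan curve $h(\partial U)$.

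The second step is to replace $h|_{\overline{U}}$, which is the only non-injective piece, by a homeomorphism $\psi : \overline{U} \to h(\overline{U})$ agreeing with $h$ on $\partial U$. By the Schoenflies theorem the homeomorphism $h|_{\partial U}$ between the Jordan curves bounding $\overline{U}$ and $h(\overline{U})$ extends to a homeomorphism of the two disks, and one may arrange that $\psi(Z(P))$ is a short non-degenerate arc inside $h(\overline{U})$ instead of a single point. Define
\[
h_\eta(x) := \begin{cases} h(x), & x \in \overline{N(P)} \setminus U, \\ \psi(x), & x \in \overline{U}. \end{cases}
\]
The two definitions coincide on $\partial U$, so $h_\eta$ is continuous; it is bijective because the two restrictions are bijections with complementary images; and being a continuous bijection of a compact Hausdorff space, it is a homeomorphism.

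For the uniform estimate, the two maps agree outside $U$. For $x \in U$, both $h(x)$ and $h_\eta(x) = \psi(x)$ lie in $h(\overline{U}) \subset B([(0,0)], \eta)$, so $d(h_\eta(x), h(x)) < 2\eta$ by the triangle inequality. Choosing $\eta < \epsilon/2$ yields $\|h_\eta - h\| < \epsilon$, completing the construction. The main obstacle I foresee is the precise choice of the neighborhood $U$: one must verify that $h|_{\partial U}$ is a genuine Jordan-curve homeomorphism (rather than a degenerate map), which amounts to selecting $U$ whose boundary is transverse to the radial foliation that defines $h$, so that $h$ only reparameterizes $\partial U$ instead of further collapsing it.
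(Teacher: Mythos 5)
Your argument is correct in substance but takes a genuinely different route from the paper. The paper's proof is an explicit global construction: it builds $h_\delta$ using the same radial-arc structure that defines $h$ itself, sending $Z(P)$ onto a short subarc $\ell_\delta \subseteq Z(P)$ of length $\delta$ with endpoint $[(0,0)]$, fixing $\partial N(P)$, and on each radial arc $L \cup L^c$ linearly contracting $L$ and expanding $L^c$ with constants depending on $\delta$ and the angle to the zip; the estimate $\|h_\delta - h\| < \epsilon$ is then asserted from the construction as $\delta \to 0$. You instead perform a local surgery: leave $h$ untouched outside a neighborhood $U$ of the zip with small image, and replace the collapsing map on $\overline{U}$ by a Schoenflies extension of $h|_{\partial U}$. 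Your route is essentially the Bing-shrinking argument that a monotone map of a compact $2$-manifold collapsing a single tame arc is a near homeomorphism; it is softer and less tied to the radial foliation, and it makes the uniform estimate genuinely transparent (both maps land in $h(\overline{U}) \subseteq B([(0,0)],\eta)$ off the agreement set), which is the step the paper leaves implicit. The paper's construction buys explicitness and reuses the same coordinates later in the inverse-limit argument, but verifying its sup-norm bound carefully would require essentially the continuity estimate you make explicit.

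Two of the conditions you impose on $U$ are over-specified, though neither is used where it matters. First, $\diam(U) < \eta$ is impossible for small $\eta$: $U$ contains $Z(P)$, which is an arc of fixed positive length $|Z(P)|$, so $\diam(U) \geq |Z(P)|$. Only $h(\overline{U}) \subseteq B([(0,0)],\eta)$ enters your estimate, and that \emph{is} achievable by continuity of $h$ and compactness of $Z(P)$, so delete the diameter requirement. Second, since every zip emanates from the common singular vertex and the neighborhoods $N(P)$ are pairwise disjoint, the point $[(0,0)]$ lies on $\partial N(P)$, so $\overline{U} \cap \partial N(P) = \emptyset$ cannot hold either; $U$ should be a relative neighborhood of $Z(P)$ in $\overline{N(P)}$ whose closure meets $\partial N(P)$ near $[(0,0)]$, and the Schoenflies patching goes through unchanged for such a relative disk. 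Finally, your closing worry about transversality of $\partial U$ to the radial foliation is a non-issue: $h$ is injective on $\overline{N(P)} \setminus Z(P)$ and $\partial U$ can be chosen disjoint from $Z(P)$, so $h|_{\partial U}$ is automatically a continuous injection of a compact set, hence a homeomorphism onto a Jordan curve, with no transversality needed.
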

\begin{proof}
For each small $\delta > 0,$ we can define a homeomorphism $h_\delta : \overline{N(P)} \rightarrow \overline{N(P)}$ as follows:
\begin{itemize}
    \item Let $\ell_\delta$ be the curve of length $\delta$ contains in $Z(P)$ and having $[(0,0)]$ as one of the endpoints. The map $h_\delta$ sends $Z(P)$ to the arc $\ell_\delta$ instead of the point $[(0,0)]$.
    \item The map $h_\delta$ is the identity on $\partial N(P)$.
    \item Let $|Z(P)|$ denote the length of the zip $Z(P)$. Each radial arc in $N(P) \setminus Z(P)$ is the union of an arc $L$ of length $|Z(P)|$ and its continuation $L^c.$ The map $h_\delta$ sends $L \cup L^c$ to itself by linearly contracting $L$ and expanding $L^c.$ We set the constant of contraction according to the angle between $L$ and $Z(P)$, and the fixed $\delta > 0.$
\end{itemize}
 A motivation for definition of $h_\delta$ is that as $\delta \rightarrow 0$, $h_\delta$ tries to be the same as $h$. So by the construction, for a fixed $\epsilon > 0$, then there exists $\delta > 0$ such that $||h_\delta - h|| < \epsilon.$ 
%--------------------------------------
%--------------------------------------------
 \begin{figure}[htp]
    \centering
    \includegraphics[width=7cm]{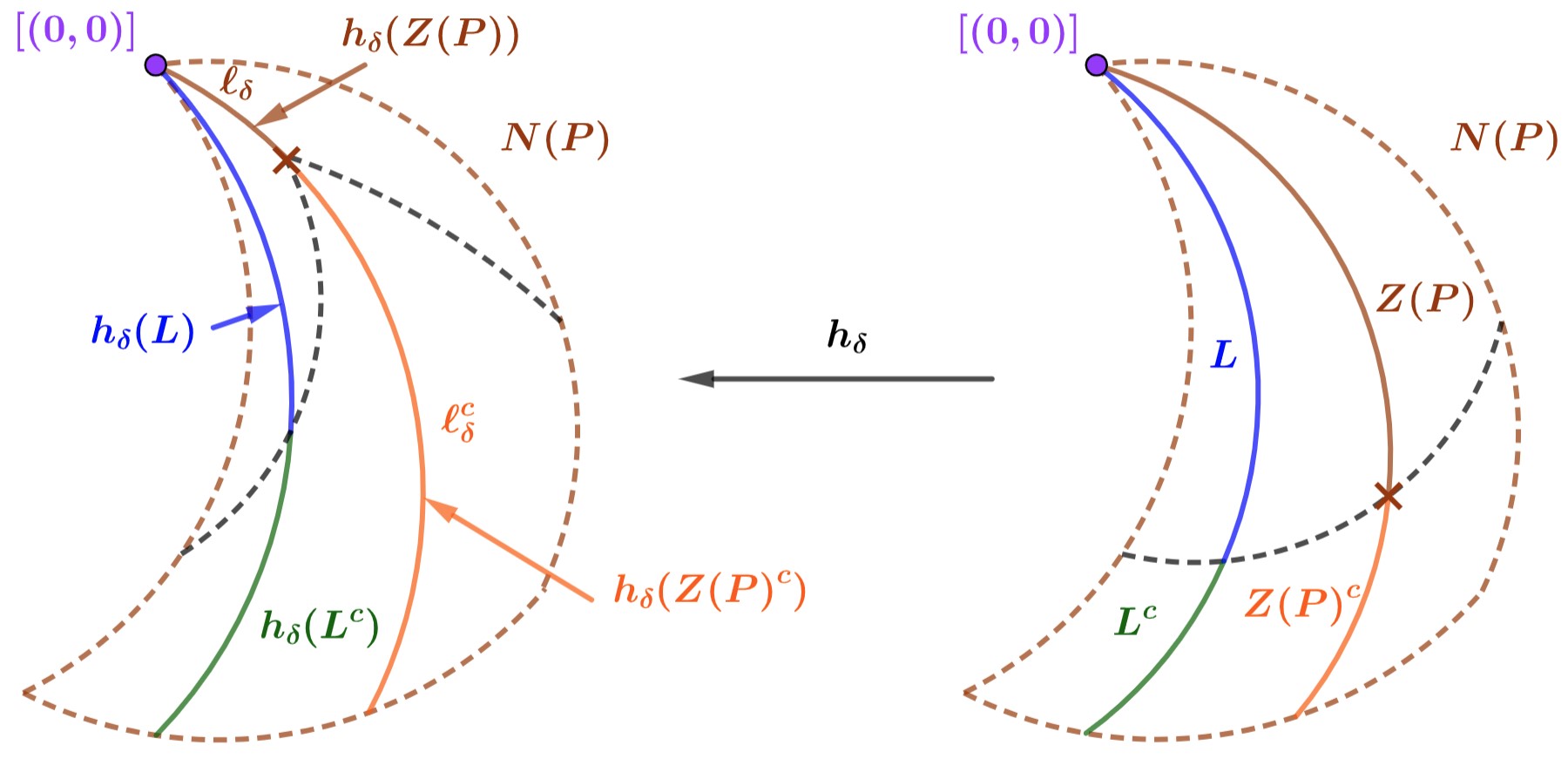}
    \caption{The homeomorphism $h_\delta$ decreases the size of the zip $Z(P)$\,.}
    \end{figure}
%-----------------------------------
\end{proof}
%----------------------------------------
Since $M_i \cong \SS^2$ for all $i \geq 0$, Proposition \ref{prop:201} and Lemma \ref{lem:302} yield that $$\text{Lim}(M) := \Big\{(z_i) \in \prod_{i=0}^\infty M_i : h_j(z_j) = z_{j-1}\Big\} \cong \SS^2.$$ 
%--------------------------------------
\begin{proposition}\label{prop:301}
The topological space $\SS^2(Q_2)$ is homeomorphic to $\SS^2.$ In particular, $Q_2$ is homeomorphic to $\SS^2.$
\end{proposition}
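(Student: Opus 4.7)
The plan is to identify $\SS^2(Q_2)$ with the inverse limit $\text{Lim}(M)$ constructed in Step 4 above, which, by Lemma \ref{lem:302} and Proposition \ref{prop:201}, is already known to be homeomorphic to $\SS^2$. Since by construction $Q_2$ is precisely the space represented by $\SS^2(Q_2)$ (the $2$-sphere with its countably infinitely many zips), the ``in particular'' clause will follow at once from the first assertion.

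First I would construct, for each $k \geq 0$, a canonical continuous surjection $\pi_k : \SS^2(Q_2) \to M_k$. Recall that $M_k$ is obtained from $[0,0.5]\times[0,1]$ by performing the first $k$ zip identifications and collapsing the rest of the boundary to the single vertex $[(0,0)]$, while $\SS^2(Q_2)$ performs all zips. Hence $M_k$ is a further quotient of $\SS^2(Q_2)$ in which every zip beyond the $k$-th is crushed to $[(0,0)]$, and $\pi_k$ is the induced quotient map. A straightforward check from the definitions of $h_k$ and $\pi_k$ gives $h_k \circ \pi_k = \pi_{k-1}$ for all $k \geq 1$, so the universal property of inverse limits produces a continuous map
$$\Phi : \SS^2(Q_2) \to \text{Lim}(M), \qquad \Phi(z) = (\pi_k(z))_{k \geq 0}.$$

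The bulk of the work is showing $\Phi$ is a bijection. For surjectivity, fix $(z_k) \in \text{Lim}(M)$. Because $h_k$ is the identity outside the shrinking neighborhood $N(P_k)$ of the $k$-th zip, the coherence relation $h_k(z_k)=z_{k-1}$ forces the coordinates to stabilize in every region disjoint from arbitrarily small neighborhoods of the singular vertex, thereby defining a unique preimage $z \in \SS^2(Q_2)$. For injectivity, distinct points $z \neq w$ of $\SS^2(Q_2)$ can be separated by some $\pi_k$ since the zips and their neighborhoods are pairwise disjoint and nested. Compactness of $\SS^2(Q_2)$ (as a quotient of a compact square) together with the Hausdorff property of $\text{Lim}(M) \subseteq \prod_i M_i$ then upgrades the continuous bijection $\Phi$ to a homeomorphism.

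The main obstacle I anticipate is the local analysis at the singular vertex $[(0,0)]$. Because the zips accumulate there, one must verify that the ``constant at $[(0,0)]$'' sequence in $\text{Lim}(M)$ corresponds only to the unique singular vertex of $\SS^2(Q_2)$, and that the $h_k$-preimages of shrinking vertex-neighborhoods have diameters tending to zero. Once this bookkeeping is complete, combining $\SS^2(Q_2) \cong \text{Lim}(M)$ with the already-established $\text{Lim}(M) \cong \SS^2$ proves $\SS^2(Q_2) \cong \SS^2$, and hence $Q_2 \cong \SS^2$.
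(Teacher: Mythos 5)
Your proposal follows essentially the same route as the paper: build compatible collapsing maps $\SS^2(Q_2)\to M_k$, invoke the universal property of the inverse limit to obtain a continuous map into $\text{Lim}(M)$, establish bijectivity (the paper does this by writing down an explicit inverse $G$, you by arguing surjectivity and injectivity directly, which amounts to the same bookkeeping at the accumulating vertex), and finish with the compact-to-Hausdorff argument together with Lemma \ref{lem:302} and Proposition \ref{prop:201}. The plan is correct and matches the paper's proof in all essentials.
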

\begin{proof}
For each $i \geq 0$,  we then define a function $\phi_i : \SS^2(Q_2) \rightarrow M_i$ by
$$ \phi_i(z) =
\begin{cases}
h_k(z) &\text{for} \ z \in N(P_k), k > i \\
z &\text{otherwise}.
\end{cases}
$$
Then $\phi_i$ is continuous because each $h_k$ is identity for all $z \in \partial N(P_k), k > i$.\\
We now have a space $\SS^2(Q_2)$ and a family of continuous functions $(\phi_i : \SS^2(Q_2) \rightarrow M_i)_{i \geq 0}.$\\
If for $i \geq j,$ let $f_{ij} = h_i \circ h_{i-1} \circ \ldots \circ h_j : M_i \rightarrow M_j$ with $f_{ii} = \text{id}_{M_i}.$\\
Then $(\phi_i)$ is compatible with $(f_{ij})$ in the sense that for any $i \geq j$, $f_{ij}\phi_i = \phi_j.$ \\
By the universal property of an inverse limit of topological spaces, there exists a unique continuous function $F : \SS^2(Q_2) \rightarrow \text{Lim}(M)$ such that for each $i \geq 0,$ $\psi_i\circ F = \phi_i$  where $\psi_i : \text{Lim}(M) \rightarrow M_i$ is the $i-$coordinate projection.
Define a function $G : \text{Lim}(M) \rightarrow \SS^2(Q_2)$ as follows: Let $z \in \SS^2(Q_2)$.\\
{\bf Case 1:} $z \in \SS^2(Q_2) \setminus (\cup_{i=1}^\infty N(P_i))$\\
Then $h_k(z) = z$ for all $k \geq 1$. So define $G(z_i) = z$ where $z_i = z$ for all $i \geq 0.$\\
{\bf Case 2:} $z \in N(P_{i_0})$ for some $i_0$\\
Then there exists a unique point $(\alpha_i) \in \text{Lim}(M)$ such that $\alpha_j = z$ for some $j$. In fact, there is a unique pair of a point $w \in \SS^2(Q_2)$ and  $j \geq 1$ such that $(\alpha_i) \in \text{Lim}(M)$ where $\alpha_i = w$ for all $i \leq j$ and $\alpha_i = z$ for all $i > j.$ Define $G(\alpha_i) = z$.
Lastly, define $G(z_i) = V$ where $z_i = V$ for all $i \geq 0$ and $V = [(0,0)].$ 
Then $G$ is a surjective function from $\text{Lim}(M)$ to $\SS^2(Q_2).$\\
Let $k \geq 1.$ Let $(z_i) \in \text{Lim}(M).$ Notice one important characteristic of an element $(z_i)$ of $\text{Lim}(M)$: it is eventually constant in the sense that either there exists $w \in \SS^2(Q_2)$ such that $z_i = w$ for all $i \geq 0$, or there exists a unique triple $(v, w, j)$ of distinct $v, w \in \SS^2(Q_2)$ and $j \geq 1$ such that $z_i = v$ for all $i < j$ and $z_i = w$ for all $i \ge j.$ For the former case, $\phi_k \circ G (z_i) = \phi_k(w) = w = \psi_k(z_i).$
For the latter case, $$\phi_k \circ G(z_i) = \phi_k(w) = 
\begin{cases}
v &\text{for} \ k < j,\\
w &\text{for} \ k \geq j.
\end{cases}
$$
So we conclude that $\phi_k \circ G = \psi_k$ for all $k \in \NN.$
Observe the facts that  $$H_1 : \SS^2(Q_2) \rightarrow \SS^2(Q_2) \ \text{such that} \ \phi_k \circ H_1 = \phi_k \ \forall k \in \NN \rightarrow H_1 = \text{id}_{\SS^2(Q_2)} \ \text{and}$$
$$H_2 : \text{Lim}(M) \rightarrow \text{Lim}(M) \ \text{such that} \ \psi_k \circ H_2 = \psi_k \ \forall k \in \NN \rightarrow H_2 = \text{id}_{\text{Lim}(M)}.$$
This gives that $G \circ F = \text{id}_{\SS^2(Q_2)}$ and $F \circ G = \text{id}_{\text{Lim}(M)}.$ So $F : \SS^2(Q_2) \rightarrow \text{Lim}(M)$ is a bijective continuous function.
Since $\SS^2(Q_2)$ is compact, $F$ is a closed map. Hence, $F$ is a homeomorphism.
\end{proof}
%--------------------------------
Now, to see that $Q_n \cong \SS^2$ for $n \geq 3$, one first note that the length of $J_1$ and $J_1'$ exceed $\frac{1}{2}$ so there are extra side identifications on $Q_n$ which cannot be seen on $Q_2$. It can be described via $\tilde J_1', \hat J_1', \check J_1'$ and $\tilde J_1, \hat J_1, \check J_1^*:$
$$\tilde{J_1'} = J_1' \cap \Big(\Big[0, \frac{n-2}{2n}\Big] \times \{1\}\Big), \tilde{J_1} = R(\tilde{J_1'}), \hat{J_1'} = J_1' \cap \Big(\Big(\frac{n-2}{2n}, \frac{1}{2}\Big] \times \{1\}\Big), \hat{J_1} = R(\hat J_1') \ \text{and}$$
$$\check J_1' = J_1' \setminus (\tilde J_1' \cup \hat J_1'), \check J_1 = J_1 \setminus (\tilde J_1 \cup \hat J_1) \ \text{which give} \ J_1 = \tilde J_1 \cup \hat J_1 \cup \check J_1, J_1' = \tilde J_1' \cup \hat J_1' \cup \check J_1'.$$ 
%------------------------------------------
 \begin{figure}[htp]
    \centering
    \includegraphics[width=5cm]{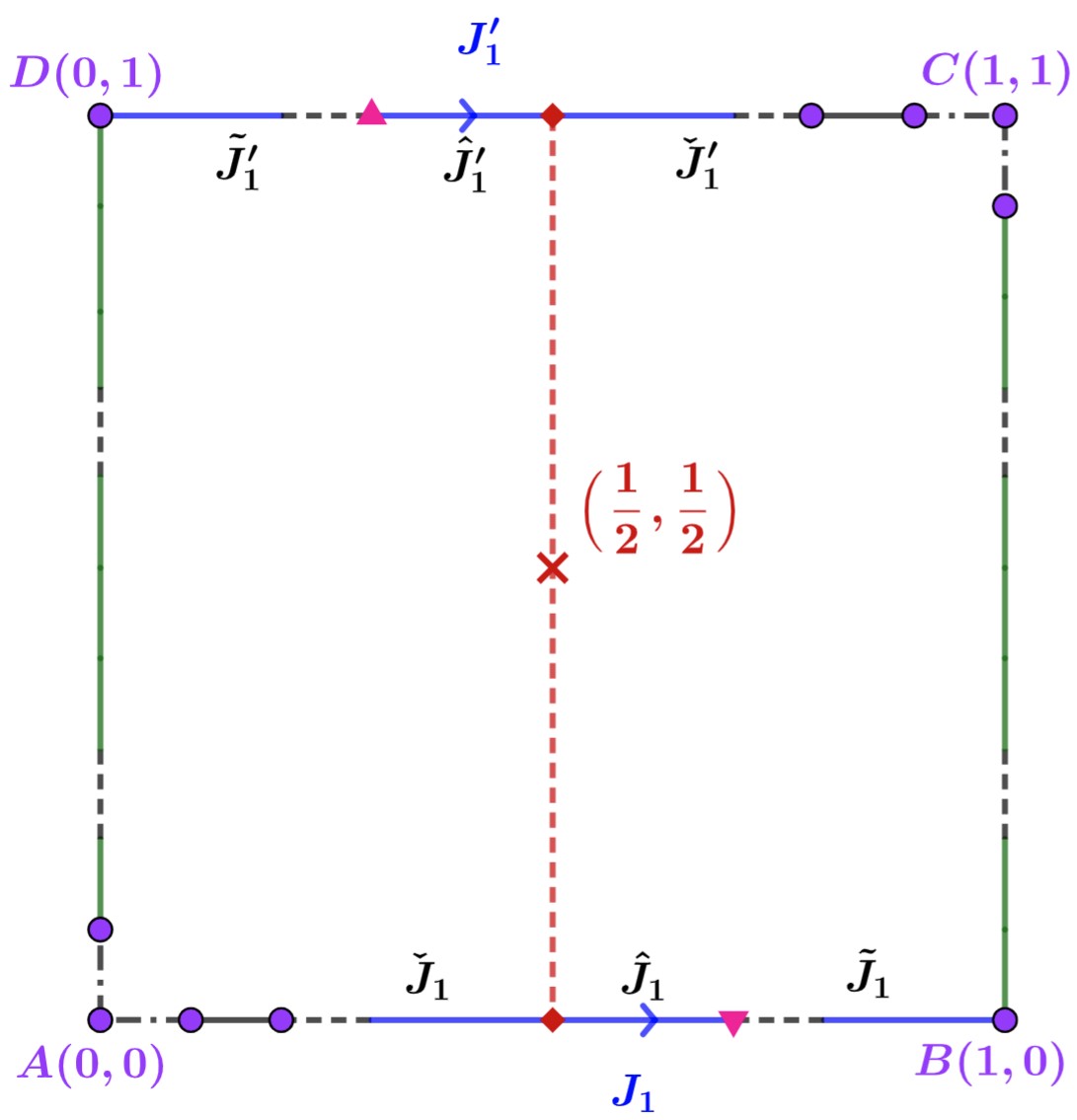}
    \caption{Decomposition of $J_1$ and $J_1'$ as $\tilde J_1, \hat J_1, \check J_1$ and $\tilde J_1', \hat J_1', \check J_1'$, respectively\,.}
    \end{figure}
%------------------------------------
One see that $Q_n$ can be described by identifying each part on the boundary of $[0,0.5] \times [0,1]$ symmetrically to its middle point. There is an identification on $\tilde J_1'$ with $\check J_1$ which does not appear on $Q_2$. In fact, one can regard $Q_n$ as $Q_2$ with an additional strip between $\tilde J_1'$ and $\check J_1$ (see the Figure below). 
%------------------------------------------
 \begin{figure}[htp]
    \centering
    \includegraphics[width=6.5cm]{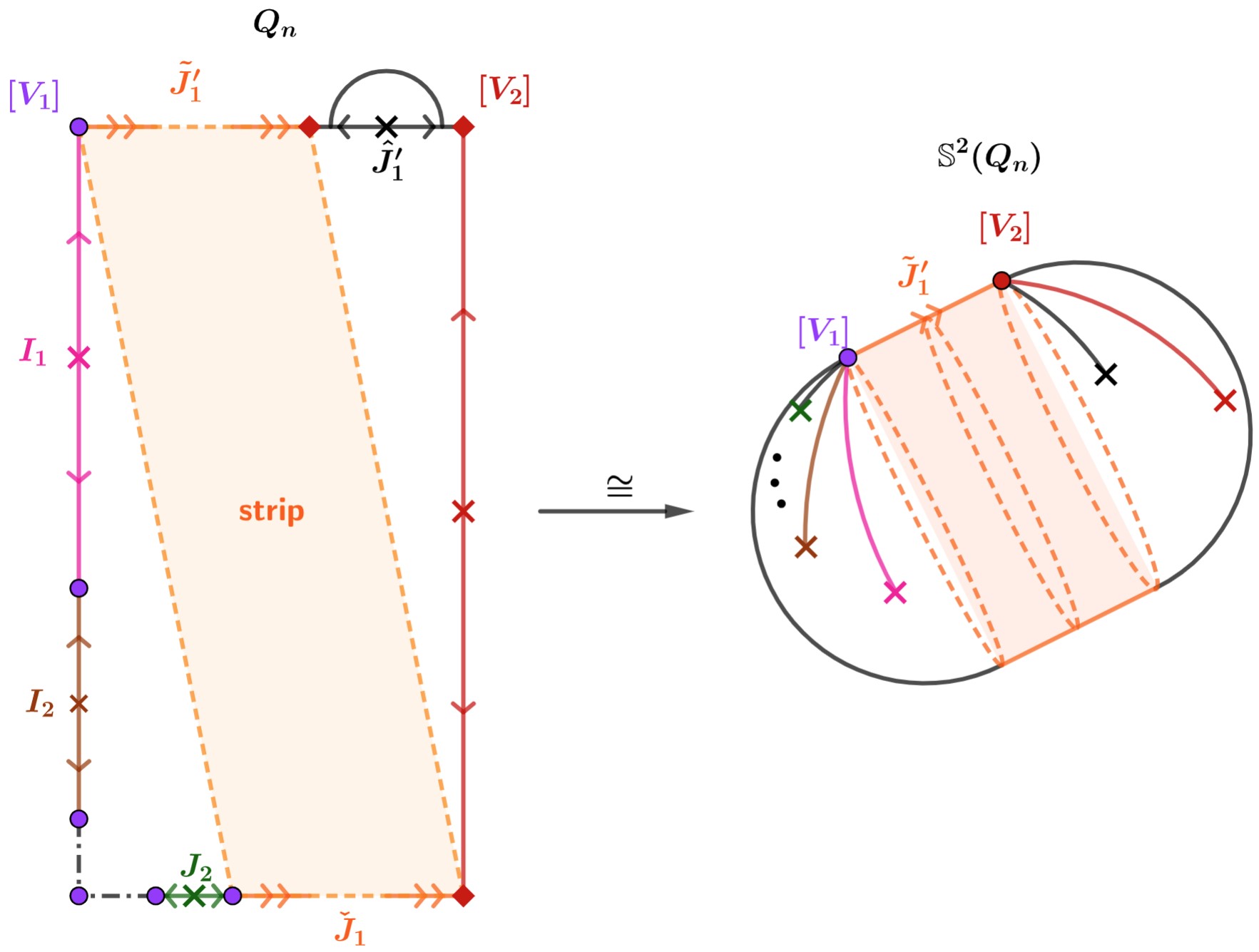}
    \caption{The quotient $Q_n$ illustrated as a quotient of $[0,0.5] \times [0,1]$. It is homeomorphic to $\SS^2(Q_n), n \geq 3$\,.}
\end{figure}\\
%-----------------------------------
We build a corresponding space $\SS^2(Q_n)$ of $Q_n$ the same way of $\SS^2(Q_2)$ and $Q_2.$ Via the same method used for $Q_2$, one then builds an inverse limit system which is homeomorphic to $\SS^2$. Then using the universal property of the inverse limit of topological spaces, $Q_n \cong \SS^2.$ We state this result as a Corollary to Proposition \ref{prop:301}.
%------------------------------------------
\begin{corollary}\label{cor:301}
The space $Q_n$ is homeomorphic to $\SS^2, n \geq 3$
\end{corollary}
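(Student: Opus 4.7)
The plan is to follow the template established in Proposition \ref{prop:301} for the case $n = 2$, with the necessary modifications to account for the additional boundary identifications that appear when $n \geq 3$. First I would represent $Q_n$ as a quotient of the rectangle $[0, 0.5] \times [0, 1]$ using the side identifications shown in Figure 7. These split the boundary into countably many arcs whose images in $Q_n$ are ``zipped up'' identifications; for $n \geq 3$ this family includes the extra zips from the identification of $\tilde J_1'$ with $\check J_1$ (and their $R$-reflections), but every zip still has diameter tending to $0$ and every pair of zips admits disjoint closed neighborhoods $N(Z)$ in the corresponding sphere-with-zips model, which I denote $\SS^2(Q_n)$. As in the $n=2$ case, $Q_n$ is homeomorphic to $\SS^2(Q_n)$, so it suffices to show $\SS^2(Q_n) \cong \SS^2$.

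Next I would construct an inverse system $(M_k^{(n)}, h_k^{(n)})_{k \geq 0}$ in direct analogy with the $n=2$ case. Order the zips $Z_1, Z_2, \ldots$ by non-increasing diameter, let $M_0^{(n)}$ be the $2$-sphere obtained by collapsing the entire boundary of $[0,0.5] \times [0,1]$ to a point, and inductively define $M_k^{(n)}$ to be the space obtained from $[0, 0.5] \times [0,1]$ by imposing the zip identifications along $Z_1, \ldots, Z_k$ and collapsing the rest of the boundary to a single point. Each $M_k^{(n)}$ remains homeomorphic to $\SS^2$, since zipping a single boundary arc of a disk via the involution $R$ preserves the sphere structure. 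Define $h_k^{(n)} : M_k^{(n)} \to M_{k-1}^{(n)}$ to be the identity outside $N(Z_k)$ and to act inside $N(Z_k)$ by the same radial collapsing map used in Lemma \ref{lem:302}; the approximation by the homeomorphisms $h_\delta$ constructed there shows each $h_k^{(n)}$ is a near homeomorphism, so Brown's theorem (Proposition \ref{prop:201}) yields $\text{Lim}(M^{(n)}) \cong \SS^2$.

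Finally, I would identify $\SS^2(Q_n)$ with this inverse limit by the same duality as in the proof of Proposition \ref{prop:301}: define $\phi_i^{(n)} : \SS^2(Q_n) \to M_i^{(n)}$ to agree with $h_k^{(n)}$ on each $N(Z_k)$ for $k > i$ and to be the identity elsewhere, obtain a continuous map $F^{(n)} : \SS^2(Q_n) \to \text{Lim}(M^{(n)})$ from the universal property, construct the inverse $G^{(n)}$ coordinate-wise using that elements of $\text{Lim}(M^{(n)})$ are eventually constant, and conclude by compactness of $\SS^2(Q_n)$ that $F^{(n)}$ is a homeomorphism. The main obstacle I expect is the bookkeeping for the zips that arise only for $n \geq 3$: checking that the new identifications pairing $\tilde J_1'$ with $\check J_1$ and their $R$-images produce zips whose diameters still tend to $0$ and which admit pairwise disjoint neighborhoods, and verifying that each intermediate $M_k^{(n)}$ really is a $2$-sphere after these new kinds of zips are incorporated one at a time. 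Once this combinatorial and topological housekeeping is carried out, the entire argument of Proposition \ref{prop:301} transfers essentially verbatim.
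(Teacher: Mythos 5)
Your proposal is correct and follows essentially the same route as the paper: the paper likewise handles $n \geq 3$ by decomposing $J_1, J_1'$ into the pieces $\tilde J_1, \hat J_1, \check J_1$ (and primed versions), viewing $Q_n$ as $Q_2$ with an additional identified strip, forming the sphere-with-zips model $\SS^2(Q_n)$, and then repeating verbatim the inverse-limit/near-homeomorphism argument of Proposition \ref{prop:301} via Brown's theorem and the universal property. Your write-up is in fact more explicit than the paper's, which simply asserts that the same method transfers.
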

%--------------------------------
%%%%%%%%%%%%%%%%%%%%%%%%%%%%%%%%%%%%%%%%%%%%%%%%%%%%%
\subsubsection{The existence of a dynamical system on $Q_n$ induced from the system $(C_n, B_n)$} We aim to verify here the existence of a dynamical system on $Q_n$ induced from $(C_n, B_n), n \geq 2$. However, one realized from the previous subsection that $Q_n, n \geq 3,$ has extra side identifications which do not appear in $Q_2$. These extra side identifications complicate the existence proof of systems on $Q_n$ for $n \geq 3$. Moreover, the proof for the case of $Q_2$ was already discussed in the talk given by Dr. Daniel Mayer (see the acknowledgement). We thus focus on these cases, and refer the case of $Q_2$ to the author dissertation (see \cite{WH}).

Let $n \geq 3.$ Each class $\omega = [(x,y)] = [(0.x_1x_2\ldots_n, 0.y_1y_2\ldots_n)]$ under $\sim_R$ can be written explicitly as:
$$\omega = \begin{cases}
\{\overline{(x,y)}, \overline{R(x,y)}\} &\mbox{if} \ (x, y) \in (0,1)^2,\\
\{(0, 0.y_1y_2y_3\ldots_n), (0, 0.\bar{y_1}^*y_2^*y_3^*\ldots_n)\} &\mbox{if} \ (x,y) \in I_1, \\
\{(0, 0.\bar{n}^*\bar{n}^*\ldots\bar{n}^*y_k^*y_{k+1}^*\ldots_n), (0, 0.0\ldots0\hat{y_k}y_{k+1}\ldots_n)\} &\mbox{if} \ (x,y) \in I_k, k \geq 2 \\
\{(0.x_1x_2x_3..._n, 0), (0.\hat{x_1}^*x_2^*x_3^*..._n, 0)\} &\mbox{if} \ (x,y) \in \tilde{J_1} \cup \tilde{J_1'},\\
\{(0.x_1^*x_2^*x_3^*\ldots_n, 1), (0.\bar{x_1}x_2x_3\ldots_n, 1)\} &\mbox{if} \ (x,y) \in \hat{J_1'}\\
\{(0.0\ldots0x_k^*x_{k+1}^*\ldots_n, 0), (0.0\ldots0\hat{x_k}x_{k+1}\ldots_n, 0)\} &\mbox{if} \ (x,y) \in J_k, k \geq 2
\end{cases}$$ where $ a^- = \bar a := a - 1, a^+ = \hat a := a+1, a^* := (n-1) - a$ and 
$[(0,0)] = \{\overline{(0,0)}\}.$
%----------------------------------------
 \begin{figure}[htp]
    \centering
    \includegraphics[width=4cm]{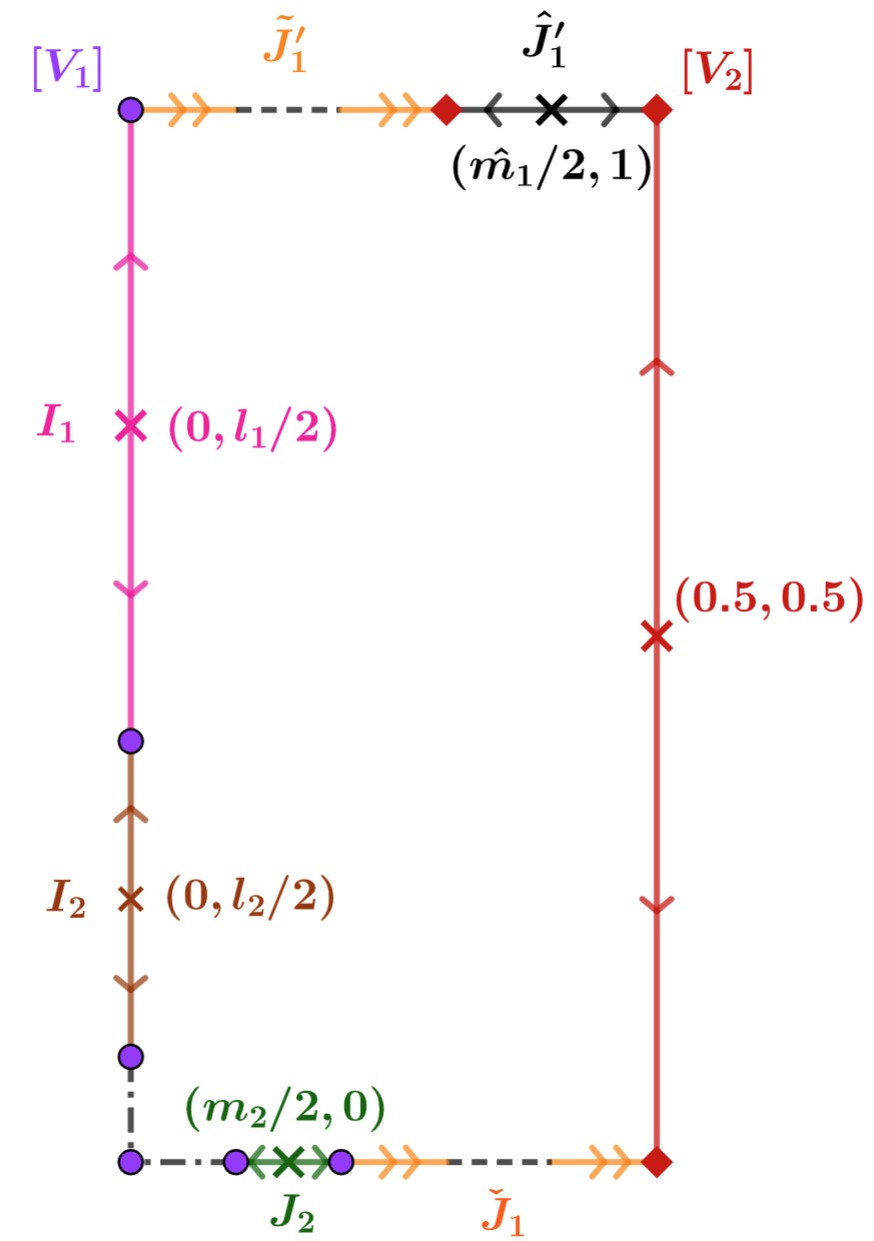}
    \caption{The identification on the boundary of $[0, 0.5] \times [0,1]$ assigned by $\sim_R$\,.}
    \end{figure}\\
%-------------------------------
Observe that each $\hat J_1', I_{k-1}$ and $J_k$ is identified under $\sim_R$ by assigning points corresponding to each middle point $\Big(\frac{\hat m_1}{2}, 1\Big), \Big(0, \frac{l_{k-1}}{2}\Big)$ and $\Big(\frac{m_k}{2}, 0\Big).$ 
%---------------------------------
\begin{proposition}\label{prop:302}
The space $C_n$ is a 2-to-1 branched-covering space of $Q_n$ with countably infinite branched points $\{[(0,0)], [(0.5)], [(\hat{m_1}/2,1)], [0, l_{k-1}/2], [(m_k/2,0)] : k \geq 2\}$. In fact, there exists a homeomorphism $T_n : Q_n \rightarrow Q_n$ such that $\hat P_n \circ B_n = T_n \circ \hat P_n$ where $\hat P_n : C_n \rightarrow Q_n$ is the quotient map.
\end{proposition}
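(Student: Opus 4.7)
The plan is to descend $B_n$ through the quotient $\hat P_n : C_n \to Q_n$ by exploiting the symmetry $B_n \circ R = R \circ B_n$, and then identify the branch locus of $\hat P_n$ with the set of $R$-fixed classes in $C_n$. First I would verify the identity $B_n \circ R = R \circ B_n$ on the fundamental domain $[0,1)^2$ by a one-line case analysis: for $x \in [(k-1)/n, k/n)$, the point $1-x$ lies in the strip indexed by $n-k+1$, and applying the corresponding branch of $B_n$ to $(1-x, 1-y)$ produces $(k - nx,\, (n-k+1-y)/n)$, which equals $R(B_n(x,y))$. Because the Chamanara identifications listed before the proposition are themselves $R$-equivariant (the digit operations $a \mapsto \bar a$ and $a \mapsto \hat a$ are intertwined by $a \mapsto a^*$), this commutation passes to $C_n$, so the rule $T_n([\omega]) := [B_n(\omega)]$ is well-defined on $Q_n$ and the relation $\hat P_n \circ B_n = T_n \circ \hat P_n$ holds by construction.

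Next I would promote $T_n$ to a homeomorphism. Continuity follows from the universal property of the quotient topology applied to the continuous map $\hat P_n \circ B_n$, where continuity of $B_n$ on $C_n$ is Lemma~\ref{lem:301}. The same reasoning applied to $B_n^{-1}$, which inherits $R$-equivariance from $B_n$, produces a continuous two-sided inverse for $T_n$; alternatively, since $Q_n \cong \SS^2$ is compact Hausdorff by Corollary~\ref{cor:301}, any continuous bijection $T_n : Q_n \to Q_n$ is automatically a homeomorphism.

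Finally I would enumerate the branch points. A class $\omega \in C_n$ satisfies $|\hat P_n^{-1}(\hat P_n(\omega))| = 1$ exactly when $R(\omega) = \omega$ as a set of Chamanara representatives. An interior class forces $(x,y) = (1/2, 1/2)$. For a class supported on $I_k, J_k$, or the subarcs $\tilde J_1', \hat J_1'$, the equation $R(\omega) = \omega$ becomes a system on the $n$-adic digits whose unique solution is the midpoint of the corresponding arc, giving the countable list $[(0, l_{k-1}/2)], [(m_k/2, 0)], [(\hat m_1/2, 1)]$ together with the singular vertex $[(0,0)]$ and the center $[(1/2, 1/2)]$. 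Every other class has a genuinely distinct $R$-partner, so $\hat P_n$ is $2$-to-$1$ away from this branch locus. The main obstacle is the boundary bookkeeping: one must translate between explicit digit expressions and geometric midpoints and verify that no accidental Chamanara identification creates a spurious fixed class, particularly on $\check J_1, \check J_1'$ which appear in $Q_n$ for $n \geq 3$ but not in $Q_2$.
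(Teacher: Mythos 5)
Your proposal is correct, and it reaches the conclusion by a cleaner route than the paper. The paper never isolates the commutation identity $B_n\circ R = R\circ B_n$ as a standalone fact; instead it computes $\hat P_n B_n \hat P_n^{-1}(\{z\})$ stratum by stratum (interior points, $\Ecal_k$ versus $\Vcal_k$, then $I_k$, $\check J_1$, $\hat J_1$ with separate cases for $n$ even and odd, and $J_k$ for $k\ge 2$), and in each case verifies single-valuedness by an explicit digit or midpoint identity such as $\frac{y+k+1}{n}+\frac{n-k-y}{n}=l_1$. Your strip computation $B_n(1-x,1-y)=(k-nx,\tfrac{n-k+1-y}{n})=R(B_n(x,y))$ packages all of those verifications into one line on the fundamental domain, and the passage to $C_n$ is legitimate because the paper has already checked (in the displayed computations preceding the definition of $Q_n$) that the Chamanara identifications are $R$-equivariant; alternatively you could note that $B_nR$ and $RB_n$ are continuous on the Hausdorff space $C_n$ and agree on the dense open set $(0,1)^2$. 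Your derivation of continuity from the universal property of the quotient and of invertibility from compactness of $Q_n\cong\SS^2$ is exactly right and is essentially what the paper's closing sentence asserts without detail. What the paper's longer computation buys is that the branch locus falls out explicitly: the midpoints $(0,l_k/2)$, $(m_k/2,0)$, $(\hat m_1/2,1)$ appear as the points where the two $B_n$-images coincide, whereas in your organization the branch points require the separate enumeration of $R$-fixed classes that you sketch in your third paragraph. That enumeration is where the remaining work lives, and you have correctly identified the delicate spot: for $n\ge 3$ the arcs $J_1,J_1'$ have length exceeding $1/2$, so the fixed class on the top edge sits in $\hat J_1'$ at $(\hat m_1/2,1)$ rather than at a naive midpoint of $J_1$, and one must confirm that the $\tilde J_1'\leftrightarrow\check J_1$ identification creates no additional fixed classes. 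Modulo writing out that bookkeeping, your argument is complete.
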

\begin{proof}
Let $z = [(x,y)] = [(0.x_1x_2\ldots_n, 0.y_1y_2\ldots_n)] \in Q_n.$\\
Assume first that $0 < 0.x_1x_2\ldots_n < 0.5, 0 < 0.y_1y_2\ldots_n < 1.$\\
Notice that $(*) \ \ R(\Ecal_i) = \Ecal_{n-1-i} = \Ecal_{i^*} \ \mbox{and} \ R(\Vcal_i) = \Vcal_{n-2-i} = \Vcal_{i^* - 1} = \Vcal_{\bar{i^*}} \ \text{for all} \ i = 0, ..., n-1$  where $\bar{a} = a - 1, a^* = (n-1)-a$ (recall notations $\Ecal_i$, $\Vcal_j$ used in Lemma \ref{lem:301}).\\
Also note that $\hat P_n^{-1}(\{z\}) = \{(0.x_1x_2\ldots_n, 0.y_1y_2\ldots_n), (0.x_1^*x_2^*\ldots_n, 0.y_1^*y_2^*\ldots_n)\}.$\\
Since $0 < 0.x_1x_2\ldots_n < 0.5$, there exists $0 \leq k \leq \lfloor \frac{n-1}{2} \rfloor$ such that $0.x_1x_2\ldots_n \in \Ecal_k \cup \Vcal_k$.\\ This yields that $0.x_1^*x_2^*\ldots_n \in \Ecal_{k^*} \cup \Vcal_{\bar{k^*}}.$ \\
If $(0.x_1x_2\ldots_n, y) \in \Ecal_k,$ then 
\begin{align*}
\hat{P_n}B_n \hat{P_n}^{-1}(\{z\}) &= \hat{P_n}B_n\{(0.kx_2\ldots_n, 0.y_1y_2\ldots_n), (0.k^*x_2^*\ldots_n, 0.y_1^*y_2^*\ldots_n)\} \\
 &= \hat{P_n}\{(0.x_2x_3\ldots_n, 0.ky_1y_2\ldots_n), (0.x_2^*x_3^*\ldots_n, 0.k^*y_1^*y_2^*\ldots_n)\} \\
 &= \{[(0.x_2x_3\ldots_n, 0.ky_1y_2\ldots_n)], [(0.x_2^*x_3^*\ldots_n, 0.k^*y_1^*y_2^*\ldots_n)]\} \\
 &= \{[(0.x_2x_3\ldots_n, 0.ky_1y_2\ldots_n)]\}
\end{align*}
because $[(0.x_2x_3\ldots_n, 0.ky_1y_2\ldots_n)] = [(0.x_2^*x_3^*\ldots_n, 0.k^*y_1^*y_2^*\ldots_n)]$.\\
If $(0.x_1x_2..._n, y) = (\frac{k+1}{n}, y) \in \Vcal_k,$ then
\begin{align*}
\hat{P_n}B_n \hat{P_n}^{-1}(\{z\}) &= \hat{P_n}B_n\Big\{\Big(\frac{k+1}{n}, y\Big), \Big(\frac{n-k-1}{n}, 1 - y\Big)\Big\} \\
 &= \hat{P_n}\Big\{\Big(0, \frac{y+k+1}{n}\Big), \Big(0, \frac{n-k-y}{n}\Big)\Big\} \\
 &= \Big\{\Big[\Big(0, \frac{y+k+1}{n}\Big)\Big], \Big[\Big(0, \frac{n-k-y}{n}\Big)\Big]\Big\}.
\end{align*}
Observe that $\frac{y+k+1}{n} + \frac{n-k-y}{n} = 1+\frac{1}{n} = l_1$ where $(0, l_1/2)$ is the middle point of $I_1$ yielding that $$\Big[\Big(0, \frac{y+k+1}{n}\Big)\Big] = \Big[\Big(0, \frac{n-k-y}{n}\Big)\Big].$$
Now assume that $x = 0$ and $y = 0.y_1y_2..._n \in (0,1)$ such that $z = [(0,y)] \neq [(0,0)]$.\\
Then $(0,y) \in I_k = \{0\} \times \Big(\frac{1}{n^{k}}, \frac{1}{n^{k-1}}\Big)$ for some $k \in \NN.$ \\
Note that
\begin{align*}
 \hat{P_n}B_n\hat{P_n}^{-1}(\{z\})  &= \hat{P_n}B_n\{(0, 0.\bar{n}^*\bar{n}^*\ldots\bar{n}^*y_k^*y_{k+1}^*\ldots_n), [(0, 0.0\ldots0y_k^+y_{k+1}\ldots_n)\}\\   
 &= \hat{P_n}\Big\{(0,0.0\bar{n}^*\bar{n}^*\ldots\bar{n}^*y_k^*y_{k+1}^*\ldots_n), \Big(0, \frac{0.0\ldots0y_k^+y_{k+1}\ldots_n}{n}\Big)\Big\}\\
 &= \{[(0,0.0\bar{n}^*\bar{n}^*\ldots\bar{n}^*y_k^*y_{k+1}^*\ldots_n)]\}
\end{align*}
because $\frac{0.\bar{n}^*\bar{n}^*\ldots\bar{n}^*y_k^*y_{k+1}^*\ldots_n}{n} + \frac{0.0\ldots0\hat{y_k}y_{k+1}\ldots_n}{n} = \frac{l_k}{n} = l_{k+1},$ $(0, l_{k+1}/2)$ is the middle point of $I_{k+1}$.\\
Next, let $z = [(x,0)]$ be such that $(x,0) \in J_1 $ and $x \leq 0.5$ (i.e. $(x,0) \in \check J_1$). \\
Observe the following facts $$\frac{k}{n} < x < \frac{k+1}{n} \rightarrow \frac{n-k}{n} < 1+\frac{1}{n} - x < \frac{n-k+1}{n}.$$
This yields that
\begin{equation*}
\begin{split}
\hat{P_n}B_n\hat{P_n}^{-1}(\{z\}) &= \hat{P_n}B_n\Big\{(x, 0), \Big(1 + \frac{1}{n} - x, 0\Big)\Big\} \\
&= \hat{P_n}\Big\{\Big(nx-k, \frac{k}{n}\Big), \Big(n(1+\frac{1}{n} - x) - (n-k), \frac{n-k}{n}\Big)\Big\} \\
&= \hat{P_n}\Big\{\Big(nx-k, \frac{k}{n}\Big), \Big(1+k - nx, \frac{n-k}{n}\Big)\Big\} \\
%&= \Big\{\Big[\Big(nx-k, \frac{k}{n}\Big)\Big]\Big\}
\end{split}
\end{equation*}
because $R(nx-k, \frac{k}{n}) = (1+k-nx, \frac{n-k}{n}).$\\
Next, let $z = [(x,y)]$ with $(x,y) \in \hat{J_1}$. \\
It is necessary to consider two cases.\\
{\bf Case 1:} $n$ is even\\
Then $z = [(x,y)] = \{(0.x_1x_2x_3\ldots_n,0), (0.\bar{x_1}^*x_2^*x_3^*\ldots_n,0)\}$ where $x_1 = n/2.$
\begin{figure}[htp]
    \centering
    \includegraphics[width=9cm]{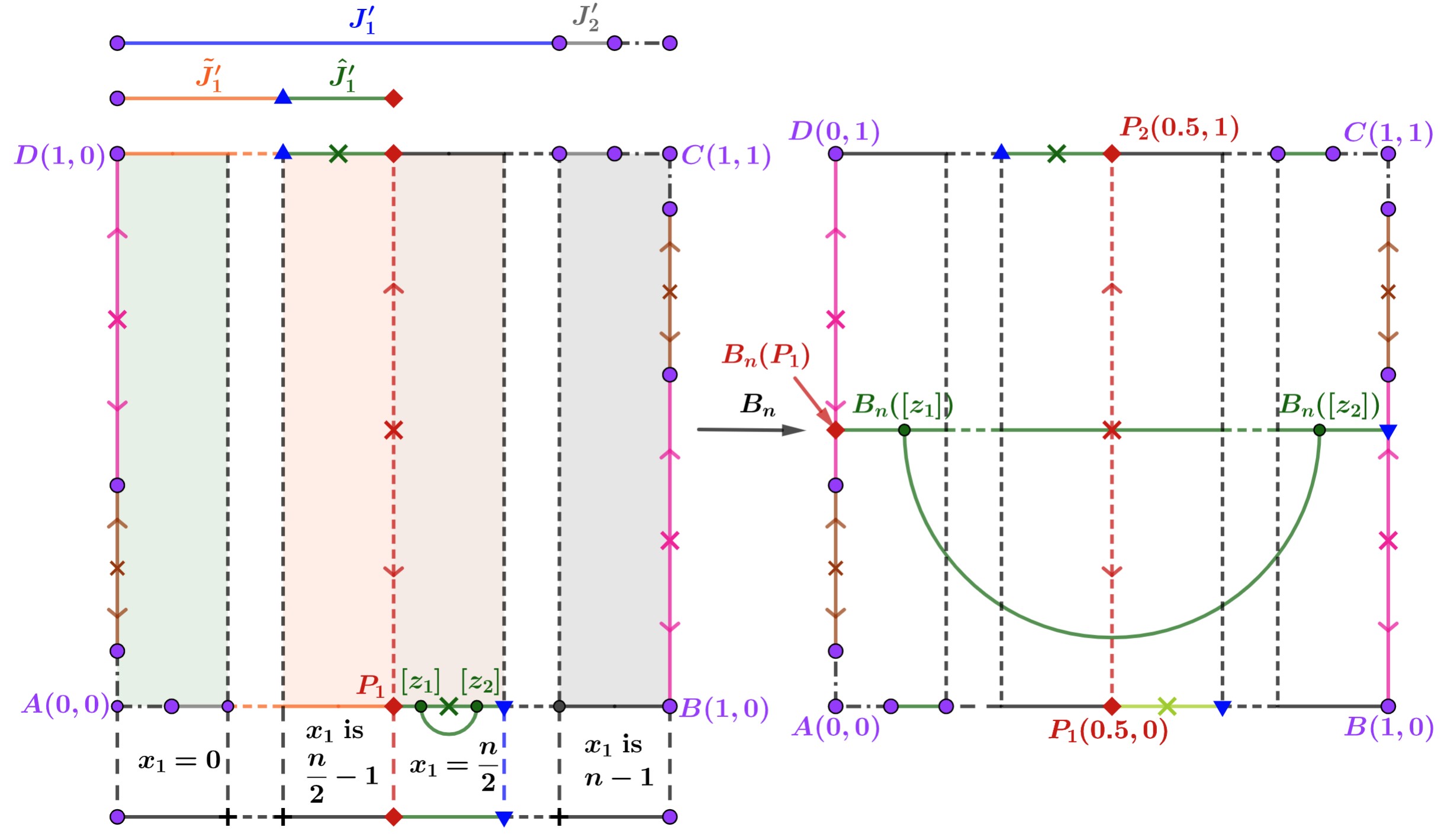}
    \caption{Images of $z_1, z_2$ under $B_n$ where $z_1 = (0.x_1x_2\ldots_n, 0), z_2 = (0.\bar{x_1}^*x_2^*\ldots_n, 0)$\,.}
    \label{fig10}
\end{figure} \\
Hence
\begin{align*}
\hat{P_n}B_n\hat{P_n}^{-1}(\{z\}) &= \hat{P_n}B_n\Big\{\Big(\frac{n}{2n} + \sum_{j=2}^\infty \frac{x_j}{n^j},0\Big), \Big(\Big(\frac{n-1}{n} - \frac{n/2-1}{n}\Big) + \sum_{j=2}^\infty \frac{n-1-x_j}{n^j}, 0\Big)\Big\} \\
&= \hat{P}_n\{(0.x_2x_3\ldots_n, 0.5), (0.x_2^*x_3^*\ldots_n, 0.5)\}\\
&= \{[(0.x_2x_3\ldots_n, 0.5)], [(0.x_2^*x_3^*\ldots_n, 0.5)]\}.
\end{align*}
with the fact that $\sum_{j=1}^\infty \frac{x_{j+1}}{n^j} + \sum_{j=1}^\infty \frac{n-1-x_{j+1}}{n^j} = \sum_{j=1}^\infty \frac{n-1}{n^j} = 1$ which yields that $$R(0.x_2x_3\ldots_n, 0.5) =(0.x_2^*x_3^*\ldots_n, 0.5).$$
{\bf Case 2:} $n$ is odd \\
Then $z = (x,y) = \{(0.x_1x_2x_3\ldots_n,0), (0.\bar{x_1}^*x_2^*x_3^*\ldots_n,0)\}$ where $x_1 = \frac{n-1}{2}.$
\begin{figure}[htp]
    \centering
    \includegraphics[width=9cm]{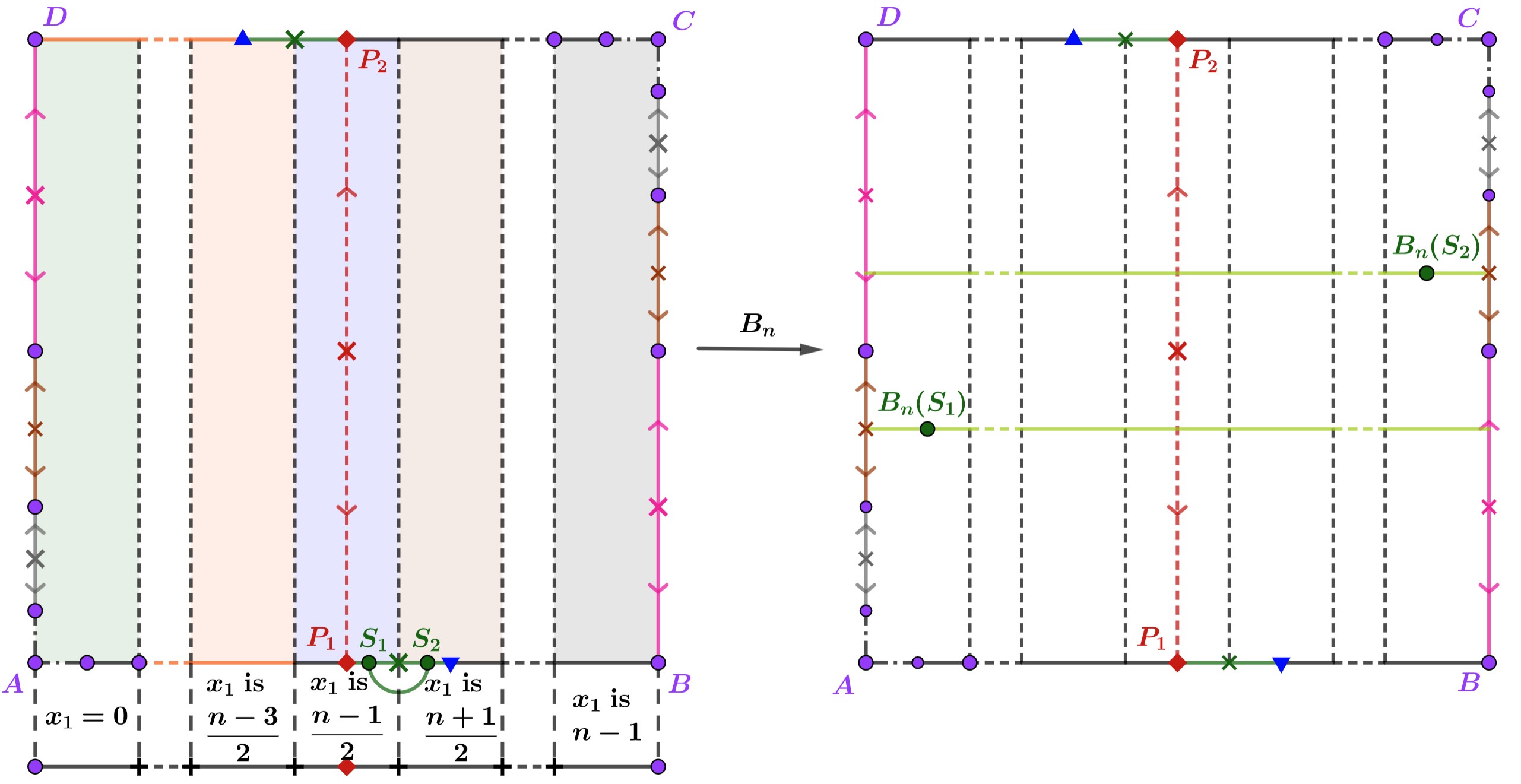}
    \caption{Images of $S_1, S_2$ under $B_n$ where $S_1 = (0.x_1x_2\ldots_n, 0), S_2 = (0.\bar{x_1}^*x_2^*\ldots_n, 0)$\,.}
    \label{fig11}
\end{figure} \\
Therefore
\begin{align*}
\hat{P_n}B_n\hat{P_n}^{-1}(\{z\}) &= \hat{P_n}B_n\Big\{\Big(\frac{n-1}{2n} + \sum_{j=2}^\infty \frac{x_j}{n^j},0\Big), \Big(\Big(\frac{n-1}{n} - \frac{\frac{n-1}{2}-1}{n}\Big) + \sum_{j=2}^\infty \frac{n-1-x_j}{n^j}, 0\Big)\Big\} \\
&= \hat{P_n}B_n\Big\{\Big(\frac{n-1}{2n} + \sum_{j=2}^\infty \frac{x_j}{n^j},0\Big), \Big( \frac{n+1}{2n} + \sum_{j=2}^\infty \frac{n-1-x_j}{n^j}, 0\Big)\Big\} \\
&= \hat{P}_n\Big\{\Big(0.x_2x_3\ldots_n, \frac{n-1}{2n}\Big), \Big(0.x_2^*x_3^*\ldots_n, \frac{n+1}{2n}\Big)\Big\}\\
&= \Big\{\Big[\Big(0.x_2x_3\ldots_n, \frac{n-1}{2n}\Big)\Big]\Big\}
\end{align*}
 because $\Big[\Big(0.x_2x_3\ldots_n, \frac{n-1}{2n}\Big)\Big]= \Big[R\Big(0.x_2x_3\ldots_n, \frac{n-1}{2n}\Big)\Big] = \Big[\Big(0.x_2^*x_3^*\ldots_n, \frac{n+1}{2n}\Big)\Big].$\\
Lastly, let $z = [(x,0)]$ where $(x,0) \in J_k$ for some $k \geq 2.$\\
In this case, $z = \{(0.0\ldots0x_k^*x_{k+1}^*\ldots_n,0), (0.0\ldots0x_k^+x_{k+1}\ldots_n,0)\}$. \\
Hence, 
\begin{align*}
    \hat{P_n}B_n\hat{P_n}^{-1}(\{z\}) &= \hat{P_n}B_n\{(0.0\ldots0x_k^*x_{k+1}^*\ldots_n,0), (0.0\ldots0x_k^+x_{k+1}\ldots_n,0)\} \\
    &= \hat{P_n}\Big\{\Big(\frac{0.x_k^*x_{k+1}^*\ldots_n}{n^{k-2}},0\Big), \Big(\frac{0.x_k^+x_{k+1\ldots_n}}{n^{k-2}},0\Big)\Big\} \\
    &=\Big\{\Big[\Big(\frac{0.x_k^*x_{k+1}^*\ldots_n}{n^{k-2}},0\Big)\Big]\Big\}
\end{align*}
because $\frac{0.x_k^*x_{k+1}^*\ldots_n}{n^{k-2}} + \frac{0.x_k^+x_{k+1}\ldots_n}{n^{k-2}} = \frac{1+ \frac{1}{n}}{n^{k-2}} = \frac{1}{n^{k-2}}+\frac{1}{n^{k-1}} = m_{k-1}$ for $k \geq 2$.\\
Since $B_n$ is a homeomorphism, we can conclude that there exists a homeomorphism \\$T_n : Q_n \rightarrow Q_n$ such that $\hat{P_n} \circ B_n = T_n \circ \hat{P_n}.$
\end{proof}
%--------------------------------------------
As a summary, we have that Proposition \ref{prop:301}, Corollary \ref{cor:301} together with Proposition \ref{prop:302} give Theorem \ref{thmx:00A}. Then the fact regarding density of periodic points stated in Lemma \ref{lem:301} yields Theorem \ref{thmx:00B}.
%%%%%%%%%%%%%%%%%%%%%%%%%%%%%%%%%%%%%%%%%%%%%%%%%%%%
\section{Metric entropy values of induced dynamical systems on the carpet}
This section starts by listing steps we used to extend a given pair $(Y, S)$ to a quadruple $(Y, S, \Bcal(Y), \mu_Y)$. A more detailed discussion can be found in \cite{BO}. One notes that this method works regardless of the initial dynamical system $(X, T)$. \\
{\bf Step 0:} In some cases, there is a pre-initial system $(\tilde X, \tilde T, \Bcal(\tilde X), \tilde \mu)$ which provides the initial system $(X, T)$ a measure-theoretic structure via Lemma \ref{lem:201}. If an initial system $(X, T)$ has a natural measure-theoretic structure, one skips this step.\\
{\bf Step 1:} Assume that the initial system $(X, T)$ has a measure-theoretic structure $(X, T, \Bcal(X), \mu_X)$. A system on the quotient sphere $(\Scal_0, H_0)$ extends to $(\Scal_0, H_0, \Bcal(\Scal_0), \nu)$ through Lemma \ref{lem:201}.\\
{\bf Step 2:} There is the $0-$th coordinate projection $\Pi_0 : S_\infty(T) \rightarrow \Scal_0, (z_i)_{i \geq 0} \mapsto z_0$. Then a measure $\mu(A) := \nu(\Pi_0(A \cap M_\infty)) $ is a probability measure on $S_\infty(T),$ where $M_\infty = \{(u_i) \in S_\infty : U_0 \in \Pi_0(S_\infty(T)) \setminus \Ocal\}.$ The quadruple $(S_\infty(T), H_\infty(T), \Bcal(S_\infty(T)), \mu)$ is a dynamical system which is isomorphic to $(\Scal_0, H_0, \Bcal(\Scal_0), \nu)$. Since these two families have finite numbers of branch points 
\vspace{0.4cm}

We now briefly discuss entropy values of $H_\infty(F_A)$ and $H_\infty(T_\lambda)$. Both families $(\TT^2, F_A)$ and $(X_g, T_\lambda)$ have their natural measure-theoretic structures induced from $\RR^2$. It is well-known that their entropy with respect to their invariant measures are $\log(|\lambda'|)$ and $\log(\lambda)$ where $\lambda'$ is the leading eigenvalue of $F_A.$ Since these two families have a finite numbers of branch points, as pointed out earlier in the discussion preceding Proposition \ref{prop:202}, entropy values of systems on the quotient sphere are $\log(|\lambda'|)$ and $\log(\lambda)$, respectively. Since each system on the carpet is isomorphic to its base system of the quotient sphere, $\log(|\lambda'|)$ and $\log(\lambda)$ are respectively the entropy values of $H_\infty(F_A)$ and $H_\infty(T_\lambda)$. One notes that the crucial step is that Proposition \ref{prop:203} can be applied immediately to show that entropy values of $F_A$ and $T_\lambda$ are lower bounds of entropy values of homeomorphisms on the quotient spheres. This is due to the fact that they have finitely many branch points. This situation is slightly different for cases of $(C_n, B_n)$ for $n \geq 2$, for which we now provide a proof.
%-------------------------------------------------
\begin{proposition}\label{prop:401}
Let $n \geq 2$. For a fixed probability vector $P = (p_0, p_1, \ldots, p_{n-1})$, there is a measure-theoretic dynamical system on $(S_\infty, H_\infty)$ with an entropy value of $-\sum_{i=0}^{n-1} p_i\log(p_i).$ As a consequence, every positive real number is realized as an entropy value of dynamical systems on $S_\infty.$
\end{proposition}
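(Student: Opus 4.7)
The plan is to realise the prescribed entropy value by pushing a Bernoulli measure on $\Sigma_n$ through the chain of factor maps
\[
(\Sigma_n,\sigma_n)\;\xrightarrow{\,P_n\,}\;(C_n,B_n)\;\xrightarrow{\,\hat P_n\,}\;(Q_n,T_n)\;\xrightarrow{\;\cong\;}\;(S_\infty(B_n),H_\infty(B_n)),
\]
and verifying that metric entropy is preserved at every link. Fix the probability vector $P=(p_0,\ldots,p_{n-1})$ and let $\mu_P$ denote the corresponding Bernoulli product measure on $\Sigma_n$; it is classical that $h_{\mu_P}(\sigma_n)=-\sum_{i=0}^{n-1}p_i\log p_i$. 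Iterated application of Lemma \ref{lem:201} produces a $B_n$-invariant measure $\mu_{C_n}=\mu_P\circ P_n^{-1}$ (using the factor map of Lemma \ref{lem:301}), a $T_n$-invariant measure $\nu=\mu_{C_n}\circ\hat P_n^{-1}$ (using the factor map of Proposition \ref{prop:302}), and finally, via the measure-theoretic isomorphism of Step 2 of Section 4 applied to $(X,T)=(C_n,B_n)$, a $H_\infty(B_n)$-invariant probability measure $\mu$ on $S_\infty(B_n)$.

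\textbf{Entropy preservation.} The structural fact that makes the whole chain work is that both $P_n$ and $\hat P_n$ have uniformly finite fibres: the explicit enumeration of classes at the start of Subsection 3.2.1 yields $|P_n^{-1}(\omega)|\leq 4$ for every $\omega\in C_n$, while Proposition \ref{prop:302} exhibits $\hat P_n$ as a $2$-to-$1$ branched cover with only countably many branch classes, each of finite cardinality. Granted this, the finite-fibre clause of Proposition \ref{prop:203} gives, for each of the first two arrows,
\[
\sup_{\mu'\,:\,\mu'\circ\pi^{-1}=\nu'}h_{\mu'}(T')\;=\;h_{\nu'}(S'),
\]
while Proposition \ref{prop:202} supplies the reverse inequality $h_{\nu'}(S')\leq h_{\mu'}(T')$ for the specific push-forward in hand. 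The two combined force equality at each step, so $h_{\mu_P}(\sigma_n)=h_{\mu_{C_n}}(B_n)=h_\nu(T_n)$. The third arrow is by construction a measure-theoretic isomorphism, so Proposition \ref{prop:202} applied in both directions delivers $h_\mu(H_\infty(B_n))=h_\nu(T_n)=-\sum_{i=0}^{n-1}p_i\log p_i$, as desired.

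\textbf{Every positive real, and the main obstacle.} For the second assertion, fix $s>0$, choose any integer $n\geq 2$ large enough that $\log n\geq s$, and observe that the Shannon entropy functional $P\mapsto -\sum_{i=0}^{n-1}p_i\log p_i$ is continuous on the simplex of probability vectors on $n$ symbols, taking the value $0$ at any Dirac vector and the value $\log n$ at the uniform vector. The intermediate value theorem then supplies a $P$ with entropy exactly $s$, and the construction above realises $s$ on the carpet. The only genuinely delicate point in the whole argument, rather than any routine calculation, is the finite-fibre verification that precedes the drop of the $\int h(T,\pi^{-1}(y))\,d\nu(y)$ term in Proposition \ref{prop:203}: one must check this for every point, not just almost every point, which is precisely the role played by the explicit class-by-class descriptions in Subsection 3.2.1 and in Proposition \ref{prop:302}. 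Once those bounds are in hand the rest of the argument is a matter of stringing together results already available in the excerpt.
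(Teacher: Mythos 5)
Your overall strategy --- push a Bernoulli measure through the factor maps, get the upper bound from Proposition \ref{prop:202} and the lower bound from Ledrappier--Walters, then hit every positive real by continuity of Shannon entropy --- is exactly the paper's route. But there is one genuine error at the step you yourself single out as the delicate one: the claim that $|P_n^{-1}(\omega)|\leq 4$ for \emph{every} $\omega\in C_n$ is false. The singular class
$$\overline{(0,0)} = \{(0,0),(1,1),(1,0),(0,1)\}\cup\Big\{\Big(0,\tfrac{1}{n^k}\Big),\Big(\tfrac{1}{n^k},0\Big),\Big(1,1-\tfrac{1}{n^k}\Big),\Big(1-\tfrac{1}{n^k},1\Big) : k\in\NN\Big\}$$
is a countably \emph{infinite} set of points of $[0,1]^2$, so its preimage under $P_n$ in $\Sigma_n$ is countably infinite (and the corresponding class $[(0,0)]\in Q_n$ likewise has an infinite fibre under the composite $\hat P_n\circ P_n$). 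Consequently the hypothesis $|\pi^{-1}(y)|<\infty$ for all $y$ in the second clause of Proposition \ref{prop:203} is not satisfied, and you cannot invoke that clause to drop the correction term; the ``class-by-class verification'' you appeal to actually fails at precisely one point.

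The gap is repairable, and the paper repairs it: one uses the first (integral) form of Proposition \ref{prop:203}. The integrand $h(T,\pi^{-1}(y))$ vanishes at every $y$ with finite fibre, and the single exceptional point $[(0,0)]$ has $\nu$-measure zero because its preimage in $\Sigma_n$ is countable and the Bernoulli measure $\rho_P$ (with all $p_i<1$) assigns zero mass to countable sets; hence $\int_Y h(T,\pi^{-1}(y))\,d\nu(y)=0$ and the desired equality $\sup_{\mu:\mu\circ\pi^{-1}=\nu}h_\mu = h_\nu$ still holds. With that substitution your argument goes through. Two further minor remarks: the paper runs Ledrappier--Walters on the single composite $\Sigma_n\to Q_n$ rather than link by link, which is only a cosmetic difference; and for the surjectivity onto $(0,\infty)$ your intermediate-value argument is fine, except that a genuine Dirac vector is excluded by the paper's convention $p_i<1$, so you should run the path from a near-Dirac vector of entropy less than $s$ to the uniform vector (this is in effect what the paper's interval $\bigl[-\tfrac{N-1}{N}\log\tfrac{N-1}{N}+\tfrac{\log N}{N},\,\log N\bigr]\subseteq E(N)$ accomplishes).
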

\begin{proof}
One refers to \cite{KH} for the following facts: for each $(\Sigma_n, \sigma_n)$ and for each probability vector $P = (p_0, p_1, \ldots, p_{n-1})$ (i.e. $0 \leq p_i < 1$ and $\sum_{i=0}^{n-1} p_i = 1$), there is a probability measure $\rho_P$ defined on $\Bcal(\Sigma_n)$ such that $(\Sigma_n, \sigma_n, \Bcal(\Sigma_n), \rho_P)$ is a dynamical system. Moreover, $h_{\rho_P}(\sigma_n) = - \sum_{i=0}^{n-1} p_i \log(p_i)$. Note that Lemma \ref{lem:301} and Proposition \ref{prop:302} yield that $(Q_n, T_n)$ is a factor of $(\Sigma_n, \sigma_n)$. So we have $$h_\nu(T_n) \leq h_{\rho_P}(\sigma_n) = - \sum_{i=0}^{n-1} p_i \log(p_i) \ \text{where} \ \nu \ \text{is the push-forward measure}.$$
To obtain the reverse inequality, we note that there is a unique point with countably infinite preimage on $\Sigma_n$, and all other points have finite preimage. Since the measure $\rho_P$ assigns a measure zero to any countably infinite set, using Proposition \ref{prop:203}, we obtain the reverse inequality and get that $h_{\nu}(T_n) = - \sum_{i=0}^{n-1} p_i\log(p_i).$
Observe that $$\lim_{N \rightarrow \infty} \Big(-\frac{N-1}{N}\log\Big(\frac{N-1}{N}\Big) + \frac{\log(N)}{N}\Big) = 0 \ \text{and} \ \sum_{i=0}^{N-1} \frac{\log(N)}{N} = \log(N) \xrightarrow{N \rightarrow \infty} \infty. $$ By continuity and connectedness, for all $N \geq 2,$ $\Big[-\frac{N-1}{N} \log\Big(\frac{N-1}{N}\Big) + \frac{\log(N)}{N}, \log(N)\Big] \subseteq E(N)$ where $E(N) := \{-\sum_{i=0}^{N-1} p_i\log(p_i) \mid P = (p_0, p_1, \ldots, p_{N_1}) \ \text{is a probability vector}\} \subseteq \RR$.
So $(0, \infty) \subseteq \cup_{N \geq 2} E(N).$
\end{proof}
%-------------------------------------------------
%%%%%%%%%%%%%%%%%%%%%%%%%%%%%%%%%%%%%%%%%%%%%%%%%%%
\section{The failure of Bowen specification property}
We present in this section a proof that the systems on the carpet do not have the approximate product property (and hence they cannot have the Bowen specification). The proof here is a modification of the proof found in \cite{BO}. We discuss two key facts prior to giving the proof: \textit{the local behavior of points near a hyperbolic fixed point} and \textit{the topology of the inverse limit}. 
%%%%%%%%%%%%%%%%%%%%%%%%%%%%%%%%%%%%%%%%%%%%%%%%%%%
\subsection{The local behavior of orbits of points near a hyperbolic fixed point} Observe first that any fixed point of $F_A^k, T_\lambda^k$ and $B_n^k$ is hyperbolic for any $k \in \NN$. Hence the behavior of orbits of points under iterations of $F_A, T_\lambda$ or $B_n$ in small neighborhoods of their hyperbolic fixed points are similar. In particular, the local behavior is analogous to the behavior of points under iterations of a hyperbolic linear map $L_M : \RR^2 \rightarrow \RR^2$ in a neighborhood of the origin.

Note that our definition of a hyperbolic linear map $L_M$ on $\RR^2$ is slightly different from the usual one.
%------------------------------------------
\begin{definition}\label{def:501}
A linear map $L_M : \RR^2 \rightarrow \RR^2$ is called {\bf a hyperbolic linear map} if $L_M$ is a linear transformation of the form
$L_M(x,y)^t = M(x,y)^t$ where $M$ is a $2 \times 2$ matrix with real entries such that both of its eigenvalues do not have modulus $1$ and $|\text{det}(M)|=1.$
\end{definition}
%-------------------------------------------
Using diagonalization coordinates, let $L_M$ be a hyperbolic linear map of the form $L_M(x,y) = (\lambda^{-1}x, \lambda y)$ where $0 < \lambda^{-1} < \lambda.$
Then Proposition \ref{prop:501} describes an iteration pattern of a point in a neighborhood of the origin under $L_M$ in a form of \textit{the proportion of visiting a fixed region of interest}. In particular, we choose a sufficiently small open square $D = (-\lambda\epsilon, \lambda\epsilon)^2 \subseteq \RR^2$ and a specific \textit{region of interest} $D^* \subseteq D$. Then, roughly, a consequence is that the proportion of the number of iterations of a point falling in $D^*$ over the total number of iterations of the point falling in $D$ cannot exceed $\frac{1}{2}.$ Observe that Proposition \ref{prop:501} is a formalization of the discussion stated in \cite{BO}.
%---------------------------------------------
\begin{proposition}\label{prop:501}
Let $E = ((-\lambda\epsilon, \lambda\epsilon) \times ([\epsilon, \lambda\epsilon) \cup (-\lambda\epsilon, -\epsilon])) \cup (([\epsilon, \lambda\epsilon) \cup (-\lambda\epsilon, -\epsilon])) \times (-\lambda\epsilon, \lambda\epsilon) ).$ Let $z \in \RR^2$ be such that there exist positive integers $N, K$ satisfying 
\begin{itemize}
    \item $L_M^N(z) \notin D, L_M^{N+K+1}(z) \notin D$ and $L_M^{N+1}(z) \in D_1^* := D_1 \setminus (E \cup (\RR,0))$ (see Figure $12$), 
    \item for all $j = 1, 2, \ldots, K, L_M^{N+j}(z) \in D,$
\end{itemize}
then $$n_N^K(z) := \frac{|\{j = 1, 2, \ldots, K : L_M^{N+j}(z) \in D_1^*\}|}{K} \leq \frac{1}{2}.$$ 
%-----------------------------------
\begin{figure}[htp]
    \centering
    \includegraphics[width=5cm]{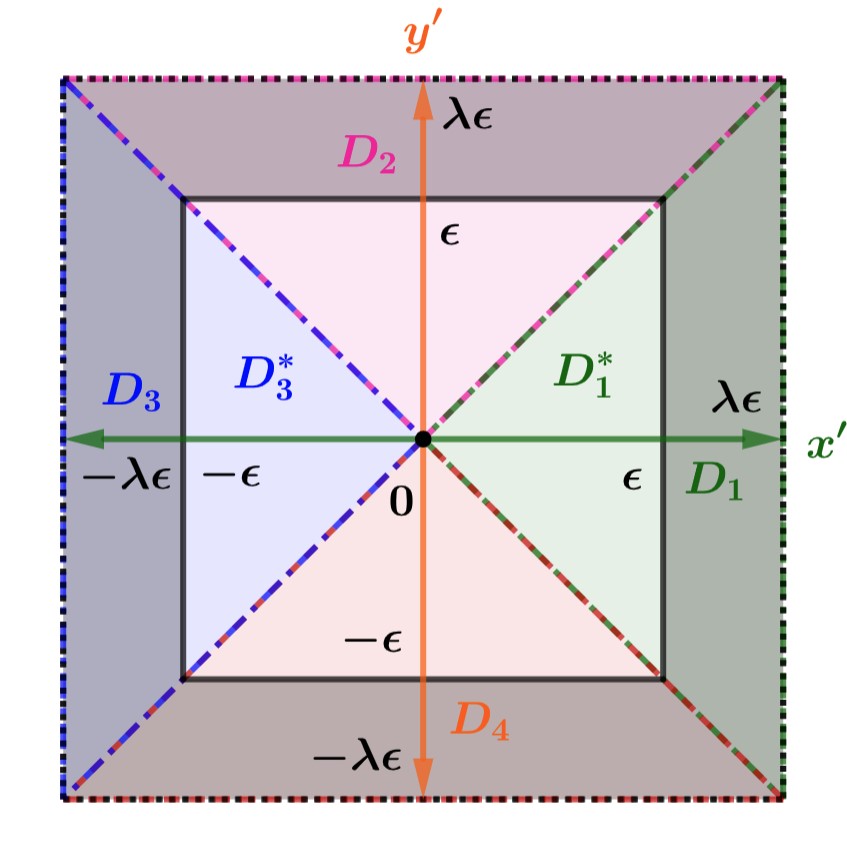}
\caption{Subregions $D_1, D_2, D_3, D_4$ and $D_1^*$ inside $D = (-\lambda\epsilon,\lambda\epsilon)^2$\,.}
\end{figure}
%----------------------------------------
\end{proposition}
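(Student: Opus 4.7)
My plan is a direct computation using the diagonal form $L_M(x,y) = (\lambda^{-1} x, \lambda y)$. Setting $(x_1, y_1) := L_M^{N+1}(z)$, the excursion is explicitly
\[
L_M^{N+j}(z) \;=\; (\lambda^{-(j-1)} x_1,\, \lambda^{j-1} y_1), \qquad j = 1, \ldots, K.
\]
The hypothesis $L_M^{N+1}(z) \in D_1^*$ ensures $y_1 \neq 0$ (since $D_1^*$ excludes the $x$-axis), and sign-preservation by $L_M$ (from $\lambda > 0$) propagates $y_j \neq 0$ throughout the excursion. The exit condition $L_M^{N+K+1}(z) \notin D$ paired with $L_M^{N+K}(z) \in D$ forces termination via the expanding $y$-coordinate, pinning down $K$ through $\lambda^{K-1}|y_1| < \lambda\epsilon \leq \lambda^K |y_1|$, i.e.\ $K$ is the ceiling of $1 + \log_\lambda(\epsilon/|y_1|)$. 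A symmetric analysis of the entry condition $L_M^N(z) \notin D$ gives the matching bound $|x_1| \geq \epsilon$, since $|y_1|$ is too small to have triggered the exit on the $y$-side.

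Next I would count the $j \in \{1, \ldots, K\}$ for which $L_M^{N+j}(z) \in D_1^*$. Membership in $D_1^* = D_1 \setminus (E \cup (\RR, 0))$ decomposes into: (a) inner-square membership $|x_j| < \epsilon$ and $|y_j| < \epsilon$, forced by $(x_j,y_j) \notin E$; (b) the positional condition defining $D_1$; and (c) $y_j \neq 0$, already handled. Using the orbit formula, (a) translates into the integer window $\log_\lambda(|x_1|/\epsilon) < j - 1 < \log_\lambda(\epsilon/|y_1|)$, and (b) imposes an additional angular cut of the form $j - 1 < \tfrac{1}{2} \log_\lambda(|x_1|/|y_1|)$ corresponding to the hyperbolic sector structure visible in Figure 12. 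The count of qualifying $j$'s is then bounded by the length of this intersection.

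The final step compares the count to $K$, exploiting the crucial $\lambda$-gap between the inner square $D \setminus E = (-\epsilon, \epsilon)^2$ and $D = (-\lambda\epsilon, \lambda\epsilon)^2$: this gap contributes one extra $\lambda$-step at each of the entry and exit sides of the excursion, both of which lie in $E$ and so are outside $D_1^*$. Combined with the angular halving from $D_1$'s diagonal sector structure, a careful case analysis on the integer parts of $\log_\lambda(|x_1|/|y_1|)$ and $\log_\lambda(\epsilon/|y_1|)$ yields the strict bound $n_N^K(z) \leq \tfrac{1}{2}$. The main obstacle is this combinatorial bookkeeping: without care, one can drift off by one in the ceilings and floors, and the boundary cases where the logarithms land on exact integers (corresponding to points aligning with the $\lambda$-grid on $D$) need separate verification to preserve the strict inequality.
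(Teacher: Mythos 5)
Your overall strategy --- writing the excursion explicitly in diagonal coordinates and comparing the number of iterates satisfying the sector condition $|x_j|>|y_j|$ against the total time spent in $D$ --- is essentially the same computation the paper performs. The paper just packages it without logarithms: it lets $m$ be the number of steps before the orbit crosses the diagonal (so $\beta^m|p|>\lambda^m|q|$ but $\beta^{m+1}|p|\le\lambda^{m+1}|q|$, with $\beta=\lambda^{-1}$) and proves the two inequalities $\lambda^{m+1}|q|<\lambda\epsilon$ and $\lambda^{2m}|q|<\lambda\epsilon$ directly from $(p,q)\in D_1^*$ and $(p,q)\in D$, so that the at most $m$ indices in $D_1^*$ are matched by at least $m$ subsequent indices that are still in $D$ but lie in $D_2\cup D_4$; the bound $m/2m=1/2$ then follows by pairing rather than by floor/ceiling bookkeeping.

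There are, however, two genuine problems with your write-up. First, your deduction $|x_1|\ge\epsilon$ from the entry condition $L_M^N(z)\notin D$ is incompatible with the hypothesis $L_M^{N+1}(z)\in D_1^*$: by its definition, $E$ is exactly the set of points of $D$ with $|x|\ge\epsilon$ or $|y|\ge\epsilon$, so membership in $D_1^*=D_1\setminus(E\cup(\RR,0))$ forces $|x_1|<\epsilon$ and $|y_1|<\epsilon$, and then $L_M^N(z)=(\lambda x_1,\lambda^{-1}y_1)$ lies in $D$ automatically. You have in effect shown that the stated hypotheses are unsatisfiable without noticing; carrying both $|x_1|\ge\epsilon$ and $(x_1,y_1)\notin E$ forward empties your integer window $\log_\lambda(|x_1|/\epsilon)<j-1<\log_\lambda(\epsilon/|y_1|)$ of content. (The paper's own proof never uses the condition $L_M^N(z)\notin D$; to get a non-vacuous argument you must likewise discard it and anchor the count solely on $0<|y_1|<|x_1|<\epsilon$.) Second, the decisive step --- the case analysis on the integer parts of the logarithms that is supposed to yield the bound $1/2$ --- is precisely the part you defer, and it is the only part that requires an argument; as written, the proposal names the obstacle but does not overcome it. In particular the borderline situation where the orbit enters $D_1^*$ already adjacent to the diagonal (the ``$m=0$'' case flagged in the remark after the proposition) needs explicit treatment, and the paper's device of comparing the indices with $\beta^{j}|p|>\lambda^{j}|q|$ to the further indices guaranteed in $D$ by $\lambda^{2m}|q|<|p|<\lambda\epsilon$ is the clean way to close this.
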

\begin{proof} For simplicity, set $\beta = \lambda^{-1}.$ Assume that $z \in \RR^2$ and $N, K \in \NN$ such that $$L_M^N(z) \notin D, L_M^{N+i}(z) \in D\ \forall 1 \leq i \leq K, L_M^{N+K+1}(z) \notin D, L_M^{N+1}(z) = (p,q) \ \mbox{with} \ |p| > |q| >0.$$
Assume that $(p,q) \in D_1^*.$
Then there exists an integer $m$ such that $0 \leq m \leq K$, $$\beta^m|p| > \lambda^m|q|, \beta^{m+1}|p| \leq \lambda^{m+1}|q| < \lambda\epsilon.$$
We also know that $\lambda^{m+1}|q| < \lambda\epsilon$ since  
$
    \lambda\epsilon \leq \lambda^{m+1}|q| \rightarrow \epsilon \leq \lambda^m|q| < \beta^m|p| \leq |p| < \lambda\epsilon\ \rightarrow  (p,q) \in E.
    $
Next, $\lambda^{2m}|q| < \lambda\epsilon$ because $\lambda^{2m}|q| \geq \lambda\epsilon$ implies that $\lambda\epsilon \leq (\beta\lambda)^m \beta^{-m}\lambda^m|q| < \beta^{-m}\beta^m|p| = |p|$ contradicting $(p,q) \in D.$ This in fact yields that $L_M^{N+i}(z) \in D_1^*, L_M^{N+j}(z) \in D_2 \cup D_4$ for all $i = 1, ..., m, j = m+1, ..., 2m.$ Therefore, $$\frac{|\{1 \leq j \leq K : L_M^{N+j}(z) \in D_1^*\}|}{K} \leq \frac{m}{2m} = \frac{1}{2}.$$
\end{proof}
%----------------------------------------------
\begin{remark}
(1) Proposition \ref{prop:501} holds for the case the region $D_1^*$ is changed to $D_3^* : = D_3 \setminus (E \cup (\RR,0))$ or any other subsets of $D_1^*$ or $D_3^*$. \\
(2) We would like to point out a possible error in the discussion found in \cite{BO} on page $343$. Using the notation in \cite{BO}, for each fixed region $D = [-\epsilon, \epsilon]^2$, there is a point $z = (p, q) \in D$ with $|p| \geq |q|$ such that {\bf the minimal $m$} satisfying $a^m|p| \geq b^m|q|$ and $a^{m+1}|p| < b^{m+1}|q|$ is equal to {\bf zero}. Our use of the region $E$ eliminates this exception.
\end{remark}
%%%%%%%%%%%%%%%%%%%%%%%%%%%%%%%%%%%%%%%%%%%%%%%%%%%%
\subsection{The topology of an inverse limit}
We now realize that all three families of initial systems $(\TT^2, F_A),$ $(X_g, T_\lambda)$ and $(C_n, B_n)$ are such that all fixed points are hyperbolic and the induced dynamical systems on the carpet are topologized by the same topology. So the fact concerning the inverse limit topology in this subsection and the proof of the failure of approximate product property in the next subsection are presented on the pair $(S_\infty, H_\infty)$ without the necessity to specify the initial system. In fact, we refer to the initial system ambiguously as the pair $(X, T)$. 

Recall that we blew up the orbit $O_1$ of the point $z_1$ with the period length $n_1$ to create $(\Scal_1, H_1)$ in the first step. Set $G = H_\infty^{n_1} : S_\infty \rightarrow S_\infty$ and $G_0 = H_0^{n_1} : \Scal_0 \rightarrow \Scal_0.$ Assume that eigenvalues of the differential of $T^{n_1}$ are $0 < \lambda^{-1} < \lambda$. The contracting direction at the fixed point $z_1$ intersects the blown up circle $S(z_1) \subseteq \Scal_1$ in exactly $2$ points, denoted them by $v_1$ and $v_2$. Let $z(i) $ denote the element $(z_1, v_i, v_i, \ldots)$ in $S_\infty$, for $i = 1, 2$. By normalizing the diagonalization coordinate of $T^{n_1}$ at $z_1$, we define a small open square region $D = (-\lambda\epsilon, \lambda \epsilon )^2 \subseteq \Scal_0$ satisfying that it contains no images of branch points. We also define open subregion $D_1, D_2, D_3, D_4$ of $D$ and their corresponding open regions $D_1', D_2', D_3', D_4'$ of $\Scal_1$ such that $\pi_1(D_j') = D_j$. Lastly, we introduce four arcs $C_1, C_2, C_3$ and $C_4$ on the boundary circle $S(z_1)$. All regions $D_1, D_2, D_3, D_4, D_1', D_2', D_3', D_4'$ and arcs $C_1, C_2, C_3, C_3$ are shown in Figure 13. 
%---------------------------------------------
\begin{figure}[htp]
    \centering
    \includegraphics[width=9cm]{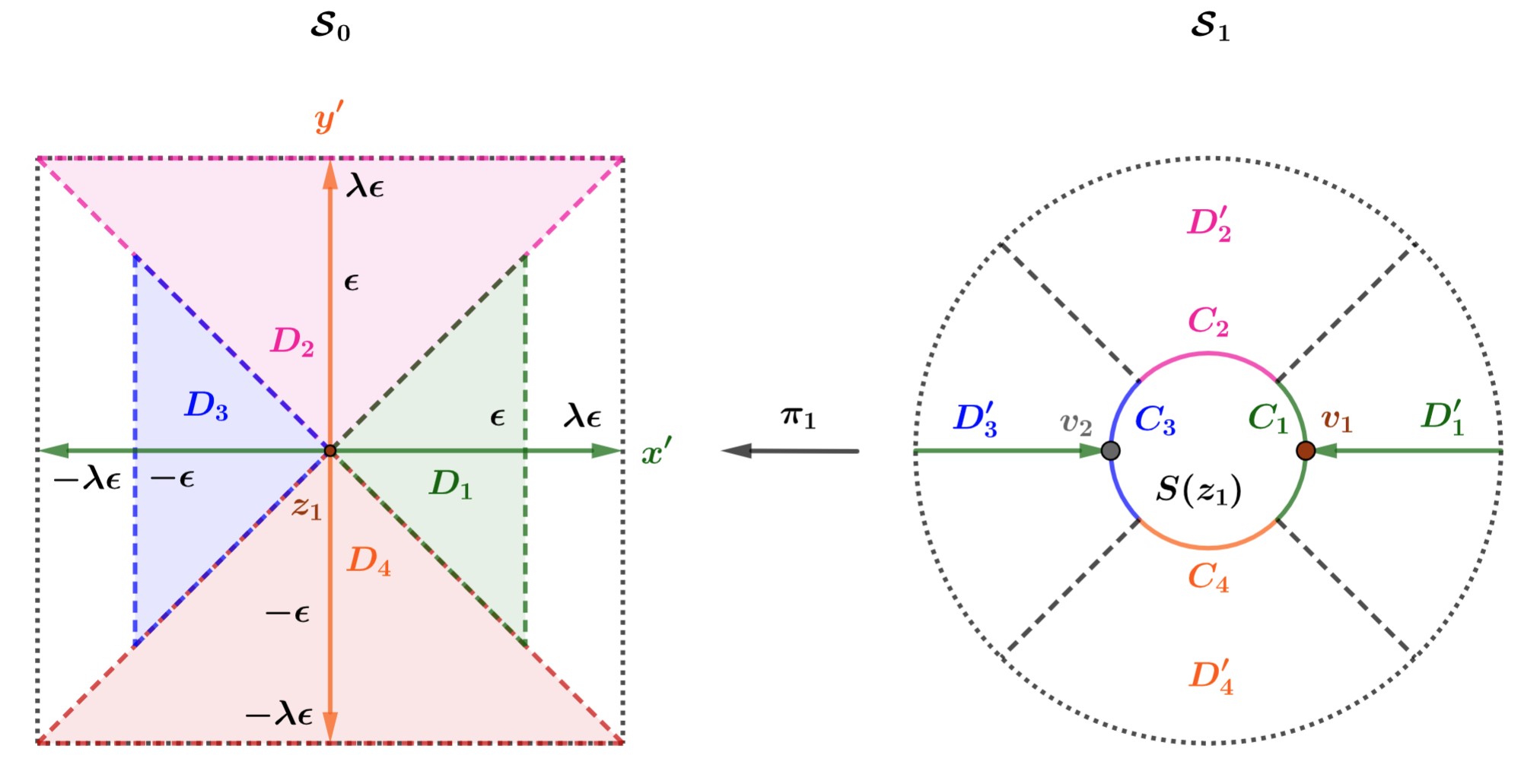}
\caption{Regions $D_1, D_2, D_3, D_4$ in $(-\lambda\epsilon, \lambda\epsilon)^2$, and arcs $C_1, C_2, C_3, C_4$ on the boundary circle of the fixed point $z_1$\,.}
\end{figure}
%-----------------------------------------------

For each $i \geq 0,$ let $\Pi_i : \prod_{j=0}^\infty \Scal_j \rightarrow \Scal_i$ be the $i-$coordinate projection map, $(z_j)_{j \geq 0} \mapsto z_i$.
Note that the product topology on $\prod_{i=0}^\infty \Scal_i$ is metrizable with a metric $d := d_\infty : \prod_{i=0}^\infty \Scal_i \rightarrow [0,\infty)$ defined by $$d((x_i), (y_i)) = \sum_{j=0}^\infty \frac{d_j(x_j,y_j)}{2^j(1+d_j(x_j,y_j))}. $$
The topology on the carpet $S_\infty$ is the subspace topology of $\prod_{i=0}^\infty \Scal_i$.
An open ball centered at $z$ with radius $r > 0$ on $S_\infty$ is denoted by $B_S(z,r) := \{y \in S_\infty : d(z,y) < r\}.$
%--------------------------------------------
\begin{lemma}\label{lem:501} For any sufficiently small $r > 0$, $\Pi_0(B_S(z(1), r) \subseteq D_1.$ That is, the projection of an open ball $B_S(z(1),r)$ to $\Scal_0$ is contained in the triangular region $D_1$ for any small $r > 0$. 
%----------------------------------------
\begin{figure}[htp]
\centering
    \includegraphics[width=9cm]{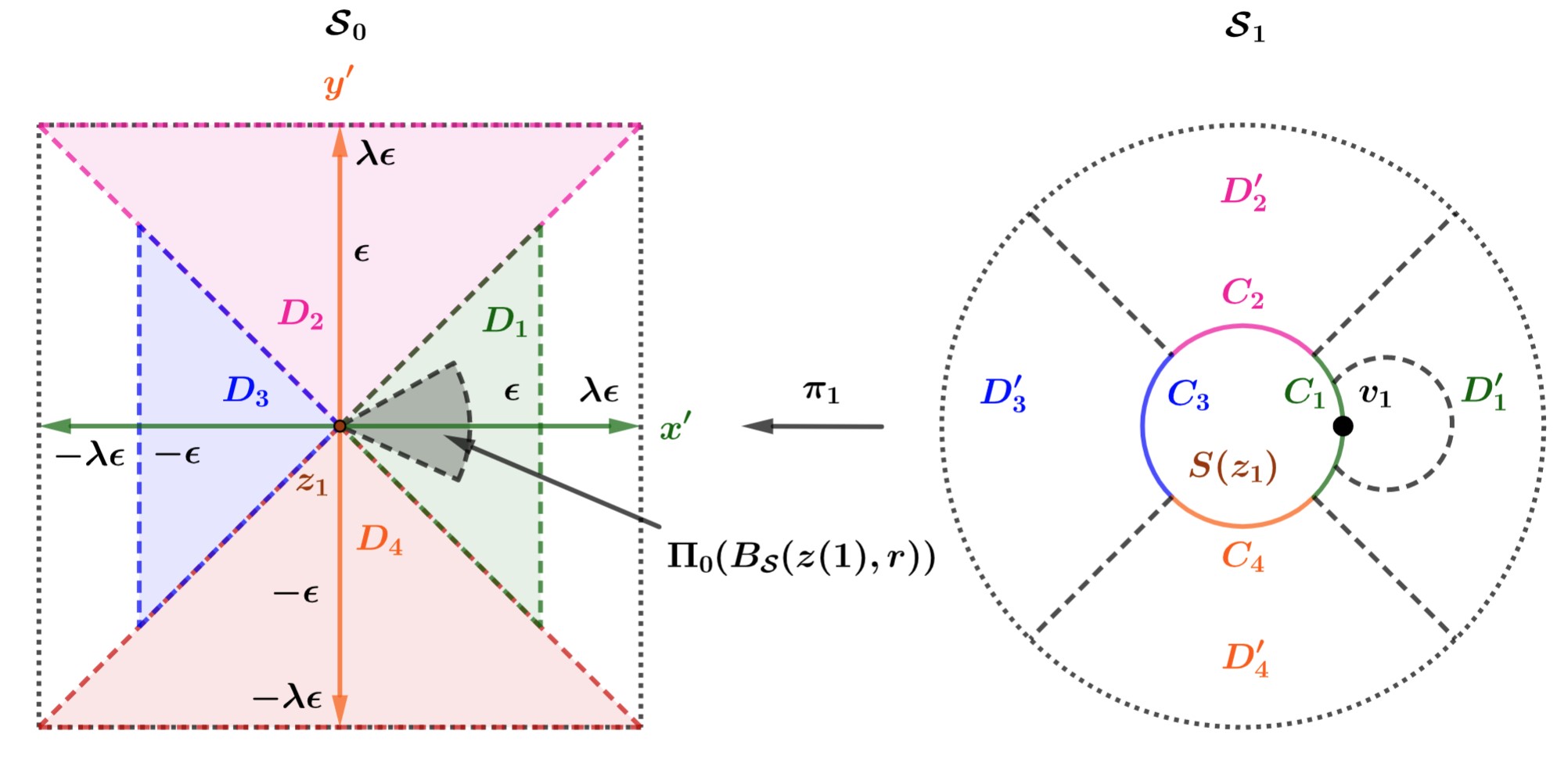}
    \caption{For small  $r > 0,$ the projection to $\Scal_0$ of $B_S(z(1),r)$ is contained in $D_1 \cup \{z_1\}$\,.}
\end{figure}
%----------------------
\end{lemma}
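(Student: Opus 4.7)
The strategy is to extract coordinate-wise closeness from the inverse-limit metric $d$, then exploit the local behavior of the collapsing map $\pi_1$ in a neighborhood of $v_1 \in S(z_1)$.

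Suppose $(x_i) \in B_S(z(1), r)$ with $r < 1/4$. From the series defining $d$, the $j=1$ term alone must satisfy $\tfrac{d_1(x_1, v_1)}{2(1+d_1(x_1, v_1))} < r$, whence $d_1(x_1, v_1) < \tfrac{2r}{1-2r}$. Thus as $r \to 0^+$, the point $x_1$ is forced into an arbitrarily small open neighborhood $U_r \subseteq \Scal_1$ of $v_1$. (A parallel bound on $d_0(x_0, z_1)$ is also available but will not be needed.)

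From the construction of $\Scal_1 = \text{Blo}_{z_1}(\Scal_0)$ recalled in Step 3, a sufficiently small open neighborhood $U$ of $v_1$ in $\Scal_1$ splits as a disjoint union $U = (U \cap S(z_1)) \cup (U \setminus S(z_1))$, where $U \cap S(z_1)$ is an open arc of the boundary circle, sent by $\pi_1$ to $\{z_1\}$, and $U \setminus S(z_1)$ is mapped homeomorphically via $\pi_1$ onto an open sector of $\Scal_0 \setminus \{z_1\}$ consisting of points whose polar direction from $z_1$ lies in a narrow arc around the direction represented by $v_1$. Since $v_1$ is by definition the point of $S(z_1)$ corresponding to the contracting eigendirection on the side opposite $v_2$, and $D_1$ is, by the choice of the diagonalizing chart, the triangular region of $D$ lying on that same side of the contracting eigenaxis, the sector $\pi_1(U \setminus S(z_1))$ lies inside $D_1$ once $U$ is small enough in the blown-up space.

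For $r$ small enough we have $U_r \subseteq U$, and by the inverse-limit compatibility $x_0 = \pi_1(x_1)$ we conclude $\Pi_0((x_i)) = x_0 \in \pi_1(U) \subseteq D_1 \cup \{z_1\}$, as depicted in Figure 14. The main subtlety is bookkeeping the identification of a polar-coordinate chart at $v_1$ in $\Scal_1$ with a wedge in $\Scal_0$ correctly, so that the $v_1$-side of the contracting eigenaxis matches $D_1$ rather than the opposite region $D_3$; this is built into the labeling of $v_1$ versus $v_2$ and the orientation of the linearizing chart used to draw Figure 13, so no additional work is required beyond invoking the construction of the blow-up.
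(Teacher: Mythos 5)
Your argument is correct and is essentially the paper's own proof run in the forward direction rather than contrapositively: the paper likewise isolates the $j=1$ term of the product metric (showing any point whose $\Scal_1$-coordinate lies in $D_2'\cup D_3'\cup D_4'\cup C_2\cup C_3\cup C_4$ is at distance more than $M/4$ from $z(1)$), which is the same single-coordinate estimate plus the same geometric fact that a small neighborhood of $v_1$ in the blow-up projects into $D_1\cup\{z_1\}$. Note only that the clean conclusion in both cases is $\Pi_0(B_S(z(1),r))\subseteq D_1\cup\{z_1\}$, matching the figure caption rather than the lemma's displayed inclusion.
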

\begin{proof}
Assume that $\sup_{x, y \in \Scal_1} d_1(x,y) < 1.$
Notice that there is $M>0$ such that $$\inf \{d_1(v_1, x) :x \in \cup_{j= 2}^4 (D_j' \cup C_j)\} > M.$$
Let $0 < r < \frac{M}{4}$. Then for any $y = (y_i) \in S_\infty$ with $y_1 \in D_2' \cup D_3' \cup D_4' \cup C_2 \cup C_3 \cup C_4,$ $$d(z(1), y) \geq \frac{d_1(v_1, y_1)}{2(1+d_1(v_1, y_1))} > \frac{M}{4} > r.$$
This yields that $\Pi_0(B_S(z(1),r)) \subseteq D_1 \cup \{z_1\}$ for all sufficiently small $r > 0.$
\end{proof}
%---------------end of lem 5-1-----------------------
\begin{remark}
(1) Lemma \ref{lem:501} holds if $z(1)$ is replaced by $z(2)$. Precisely, for any sufficiently small $r > 0$, $\Pi_0(B_S(z(1),r)) \subseteq D_1 \ \text{and} \ \Pi_0(B_S(z(2)),r) \subseteq D_3.$ \\
(2) Lemma \ref{lem:501} describes that a projection of an open ball of a small radius on the inverse limit $S_\infty(F_A)$ to the $0^{th}-$coordinate is not an open ball centered at $z_1$. The projection is instead a triangular region contained completely in $D_1$. 
\end{remark}
%%%%%%%%%%%%%%%%%%%%%%%%%%%%%%%%%%%%%%%%%%%%%%%%%%%%
\subsection{The proof of the failure of the approximate product property}
%-----------------------------------------------
\begin{theorem}\label{thm:501}
The system $(S_\infty, H_\infty)$ does not have the approximate product property.
\end{theorem}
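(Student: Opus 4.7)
The plan is a proof by contradiction. The key observation is that $z(1), z(2) \in S_\infty$ are both fixed by $G := H_\infty^{n_1}$ and project to the common $G_0$-fixed point $z_1 \in \Scal_0$, yet by Lemma \ref{lem:501} and its remark the $\Pi_0$-images of sufficiently small balls around them lie in the disjoint triangular regions $D_1^*$ and $D_3^*$ near $z_1$. An APP-shadowing of a sequence that insists on alternating between $z(1)$ and $z(2)$ therefore forces the projected $G_0$-orbit to alternate between $D_1^*$ and $D_3^*$ too densely to be compatible with the density bound of Proposition \ref{prop:501}.

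Assume for contradiction that $H_\infty$ has APP; a standard time-rescaling argument then yields APP for $G$ (with degraded parameters), so it suffices to contradict this. Fix $\epsilon > 0$ so small that $\Pi_0(B_S(z(j), \epsilon)) \subseteq D_j^*$ for $j = 1, 3$, pick $\delta_1, \delta_2 > 0$ strictly satisfying $3\delta_1 + \delta_2 < 1$ (e.g.\ $\delta_1 = \delta_2 = 1/5$), and take $(x_i)_{i\geq 1}$ with $x_i = z(1)$ for odd $i$ and $x_i = z(2)$ for even $i$. Apply $G$-APP with $n$ large to obtain gaps $(h_i)_{i=1}^M$ satisfying $n \leq h_{i+1} - h_i \leq (1+\delta_2)n$ together with a shadowing point $x$. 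Since $z(1), z(2)$ are $G$-fixed, for each $i$ at least $(1-\delta_1)n$ of the indices $j \in [0, n-1]$ satisfy $G^{h_i+j}(x) \in B_S(x_i, \epsilon)$; Lemma \ref{lem:501} then places $G_0^{h_i+j}(\Pi_0 x)$ in $D_1^*$ for odd $i$ and in $D_3^*$ for even $i$ on $\geq (1-\delta_1)n$ such $j$.

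The contradiction is purely combinatorial. By Proposition \ref{prop:501}, each $G_0$-visit to $D$ contributing $m$ points to $D_1^*$ inside one odd window carries an equally long segment of $D_2 \cup D_4$ immediately after the $D_1^*$ segment, staying in the right half-plane; summing over the visits feeding that window, the resulting right-half ``tail'' spilling past the window has length $\geq (1-2\delta_1)n$ after subtracting what fits within the window's non-$D_1^*$ slack of $\delta_1 n$. A symmetric left-half tail of the same size follows every even window. On the absorption side, every window caps its ``wrong-half'' time at $\delta_1 n$ (to accommodate its required $D_j^*$ density) and every inter-window gap is $\leq \delta_2 n$. Comparing the aggregate tail $M(1-2\delta_1)n$ with the aggregate absorption capacity $\approx Mn(\delta_1 + \delta_2)$ and letting $M \to \infty$ forces $1 \leq 3\delta_1 + \delta_2$, contradicting the parameter choice. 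The main technical obstacle is noticing that the single per-window slack of $\delta_1 n$ must simultaneously absorb both the right-half tails from odd windows and the left-half tails from even windows; once this sharing is properly accounted for, the count closes, and no appeal to Pfister--Sullivan machinery is needed.
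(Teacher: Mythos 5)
Your proposal is correct in substance and rests on the same two pillars as the paper's proof: Lemma \ref{lem:501} (small balls about $z(1)$ and $z(2)$ project into the disjoint sectors $D_1$ and $D_3$) and Proposition \ref{prop:501} (the $D_1^*$-segment of a visit to $D$ is followed by an at-least-as-long segment in $D_2\cup D_4$), applied to the alternating target sequence $z(1),z(2),z(1),\dots$ for $G=H_\infty^{n_1}$. Where you differ is the endgame. The paper fixes $\delta_1=10^{-1}$, $\delta_2=10^{-2}$ and derives the contradiction from the first two windows alone: the first window forces a long consecutive block in $D_1$, hence by Proposition \ref{prop:501} a block of comparable length in $D_2\cup D_4$ that overruns the second window, which needs at least $\tfrac{9n}{10}$ of its indices in $D_3$; since $D_2\cap D_3=\emptyset$ this is impossible. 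Your asymptotic bookkeeping over $M\to\infty$ windows, with each window's $\delta_1 n$ slack shared between the incoming and outgoing tails, does close correctly and yields the slightly weaker parameter requirement $3\delta_1+\delta_2<1$; but this added generality buys nothing, because the approximate product property is quantified over \emph{all} $\epsilon,\delta_1,\delta_2>0$, so to refute it one is free to choose $\delta_1,\delta_2$ tiny and stop after two windows, as the paper does. One soft spot you share with the paper: Lemma \ref{lem:501} places the projected ball only in $D_1$ (not $D_1^*$, which excludes the local stable manifold), so Proposition \ref{prop:501} does not literally cover an orbit lying on that axis; this degenerate case needs a one-line separate dispatch (such an orbit remains in $D_1$ for all future time and so can never meet $D_3$), which neither write-up makes explicit.
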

\begin{proof}
Suppose that $G = H_\infty^{n_1}$ has the approximate product property.\\ Let $\epsilon_0 > 0$ be sufficiently small (Lemma \ref{lem:501}).  
Let $\delta_1 = 10^{-1}$ and $\delta_2 = 10^{-2}.$\\ Let $N = N(\epsilon, \delta_1, \delta_2)$ be the natural number and fix $n \geq \max\{N, 100\}$ with $100|n$. \\Let $(x(i))_{i \in \NN} \subseteq \Scal$ where $x(2i+1) = z(1)$ and $x(2i+2) = z(2)$ for all $i \in \NN_0$. Then there exist $(h_i) \subseteq \NN_0$ satisfying $h_1 = 0, n \le h_{i+1} - h_i \le n + \frac{n}{100}$ and $y = (y_i) \in S_\infty$ such that 
$$|\{0 \leq j < n : d(G^{h_i+j}(y), G^j(x(i))) > \epsilon_0\}| \leq \frac{n}{10}$$ for each $i \in \NN.$ This yields that for each $i \in \NN$, $$G^{h_i+j}(y) \in B_S(G^j(x(i)), \epsilon_0)$$ for at least $\frac{9n}{10}$ times among $h_i, h_i+1, \ldots, h_i+n-1.$ This implies that $$G_0^j(y_0) =: \Pi_0(G^j(y)) \in \Pi_0(B_S(x(1),\epsilon_0)) \subseteq D_1$$ for at least $\frac{9n}{10}$ times out of $0, 1, \ldots, n-1,$ where $G_0 = H_0^{n_1}.$ Then we have that there exist $0 \leq m_1 < m_2$ with $m_2 - m_1 \geq \frac{8n}{10}$ and $ 0 \leq m_3 \leq \frac{n}{10}$ with $m_2+m_3 = n-1$ such that 
$$G^j_0(y_0) \in D_1 \ \text{for all} \ j = m_1, m_1+1, \ldots, m_2.$$
According to Proposition \ref{prop:501}, $G^j_0(y_0) \in D_2$ for all $j = n, n+1, \ldots, n+\frac{7n}{10}.$\\ Since $h_2 < n+ \frac{n}{100}$ and $D_2 \cap D_3 = \emptyset$, it must be that $$|\{0 \leq j < n : d(G^{h_2+j}(y), x(2))\}| > \frac{n}{10}.$$
\end{proof}
%-----------------------------------
\begin{remark}
(1) Theorem \ref{thm:501} holds for a system $(S_\infty, H_\infty)$ with eigenvalues $\lambda < 0$ or $\lambda^{-1} < 0$. In such cases, we consider the system $(S_\infty, H_\infty^{2k})$ which satisfies that $0 < \lambda^{-2k} < \lambda^{2k}.$ \\
(2) As we mentioned earlier, the proof presented in Theorem \ref{thm:501} is valid on $(S_\infty, H_\infty)$ regardless of the initial system being $(\TT^2, F_A), (X_g, T_\lambda)$ or $(C_n, B_n)$.
\end{remark}

\vspace{0.4cm}
 The proof of Theorem \ref{thm:501} gets a contradiction from an argument on the behavior of points in the neighborhood of $z(1)$ and $z(2)$. These two points are representatives of the contracting direction on the boundary circle $S(z_1)$. Though the notion of the Bowen specification is usually defined on a compact metric space, it is an interesting question \textit{if we consider the invariant subset of $S_\infty$ which we exclude all points in $\Ocal$ (so we exclude both $z(1)$ and $z(2)$), does the specification hold on this subset?}
%%%%%%%%%%%%%%%%%%%%%%%%%%%%%%%%%%%%%%%%%%%%%%%%%%%%
\subsection{The invariant subspace $(S^*, H^*)$: does it have the specification ?}
Let $S^*$ be a subset of $S_\infty$ defined by $$S^* = \{(z_i)_{i \geq 0} \in S_\infty : z_0 \in \Scal_0 \setminus \Ocal\}.$$ That is, $S^*$ is a subset of $S_\infty$ where we exclude all blown up boundary circles. Observe that $H(\Ocal) = \Ocal$ so $S^*$ is $H-$invariant. This gives that $(S^*, H^*)$ is a dynamical system where $H^* := H_\infty|_{S^*} : S^* \rightarrow S^*.$ We show that even $(S^*, H^*)$ cannot have the Bowen specification property.

Before we proceed on, we note two facts. First, $S^*$ is not a compact metric space so it is not true that the Bowen specification implies the approximate product property. Second, the map $H^*$ is a restriction of $H_\infty$ so that Proposition \ref{prop:501} still can be applied. Hence, one needs to only investigate if there is a variant of Lemma \ref{lem:501} for $H^*$.
%-------------------------------------------
\begin{lemma}\label{lem:502}
There exists $0 < \delta < \epsilon$ such that for any $Q = (q_i)_{i \geq 0} \in S_\infty$ with $q_0 = z_1 + (x_Q, 0)$ where $ 0 < x_Q < \epsilon - \delta$, $$\Pi_0(B_S(Q, \delta)) \subseteq D_1. $$
\end{lemma}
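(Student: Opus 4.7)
The plan is to adapt the argument of Lemma \ref{lem:501} (which handled the extreme case $q_0 = z_1$, $q_1 = v_1$) to the off-axis situation where $q_0$ is an interior point of $D_1$ lying on the contracting axis through $z_1$. The key mechanism is again the separation of the lifted region $D_1'$ from the three companion regions $D_j'$ and arcs $C_j$ ($j = 2,3,4$) inside $\Scal_1$, combined with the single-coordinate estimate extracted from the product metric $d$ on $S_\infty$.

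First I would set up the lift. Since $q_0 = z_1 + (x_Q, 0)$ satisfies $q_0 \neq z_1$, and $\pi_1$ is a homeomorphism away from $S^1(z_1)$, the point $q_1 = \pi_1^{-1}(q_0) \in D_1'$ is well-defined and unique (and $q_i$ for $i \geq 2$ is then determined analogously). As $x_Q$ varies over $(0, \epsilon - \delta]$, the point $q_1$ traces out an arc $\gamma \subset D_1'$ whose closure in $\Scal_1$ adjoins $C_1$ at the single point $v_1$ in the limit $x_Q \to 0^+$.

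The central step is a uniform separation bound
\[
M^{*} \;:=\; \inf_{x_Q \in (0,\epsilon - \delta]} d_1\bigl(q_1,\, (D_2' \cup D_3' \cup D_4') \cup (C_2 \cup C_3 \cup C_4)\bigr) \;>\; 0.
\]
For small $x_Q$ the point $q_1$ is $d_1$-close to $v_1$, and the triangle inequality combined with the estimate $d_1\bigl(v_1, (\bigcup_{j=2}^{4} D_j') \cup (\bigcup_{j=2}^{4} C_j)\bigr) > M > 0$ from the proof of Lemma \ref{lem:501} gives a lower bound of, say, $M/2$ on an entire $\Scal_1$-neighborhood of $v_1$. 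For $x_Q$ bounded away from $0$ the arc lies in a compact subset of $D_1'$, and the continuous distance function to the complement of $D_1' \cup C_1$ attains a positive minimum there. I expect the hardest part to be the regime $x_Q \to 0^+$: in $\Scal_0$ the four regions $D_j$ all accumulate at $z_1$, so any $\Scal_0$-based separation estimate collapses; it is precisely the blow-up $\Scal_1$ that resolves this into positive separation, and one must argue carefully using the geometry of $\Scal_1$ near $C_1$.

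Having $M^{*}$ in hand, I would pick $\delta > 0$ small enough that $\tfrac{2\delta}{1-2\delta} < M^{*}$ and simultaneously that a $\tfrac{\delta}{1-\delta}$-ball in $\Scal_0$ about $q_0$ stays inside $(-\lambda\epsilon, \lambda\epsilon)^2$ (the upper bound $x_Q < \epsilon - \delta$ is exactly what ensures the latter). Then for any $Q' = (q_i')_{i \geq 0} \in B_S(Q, \delta)$, the $j = 1$ summand of the metric forces $d_1(q_1, q_1') < M^{*}$, hence $q_1' \in D_1' \cup C_1$, and therefore $q_0' = \pi_1(q_1') \in D_1 \cup \{z_1\}$. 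The remaining possibility $q_0' = z_1$ is excluded by restricting to the invariant subspace $S^{*}$ (which is the setting in which this lemma will be applied in Subsection 5.4), so $\Pi_0(B_S(Q, \delta)) \subseteq D_1$ as claimed.
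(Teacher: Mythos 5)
Your proof follows the same route as the paper's: both hinge on a uniform positive $d_1$-separation in $\Scal_1$ between the lifted contracting segment $\{\pi_1^{-1}(z_1+(s,0)) : 0<s<\epsilon\}$ and $\bigcup_{j=2}^4 (D_j' \cup C_j)$, transferred to $S_\infty$ through a single summand of the product metric. The paper merely asserts this uniform bound in one line and concludes, whereas you supply the near-$v_1$/compactness justification and also flag the containment-in-$D$ and $q_0'=z_1$ subtleties that the paper leaves implicit; the underlying argument is the same.
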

\begin{proof}
Using the same notation as in the proof of Lemma \ref{lem:501}, there is $M > 0$ such that $$\inf\{d_1(x, y) : \pi_1(x) = z_1 + (s,0) \ \text{with} \ 0 < s < \epsilon, y \in \cup_{j=2}^4 (D_j' \cup C_j) \} > M.$$
We then have that $d(z,Q) > \frac{M}{4}$ for all $z$ with $\Pi_2(z) \in \cup_{j=2}^4 (D_j' \cup C_j)$ and $Q = (q_i)$ with $q_0 = z_1 + (x_Q, 0)$ where $0 < x_Q < \epsilon.$ 
%--------------------------------------
\begin{figure}[htp]
    \centering
    \includegraphics[width=9cm]{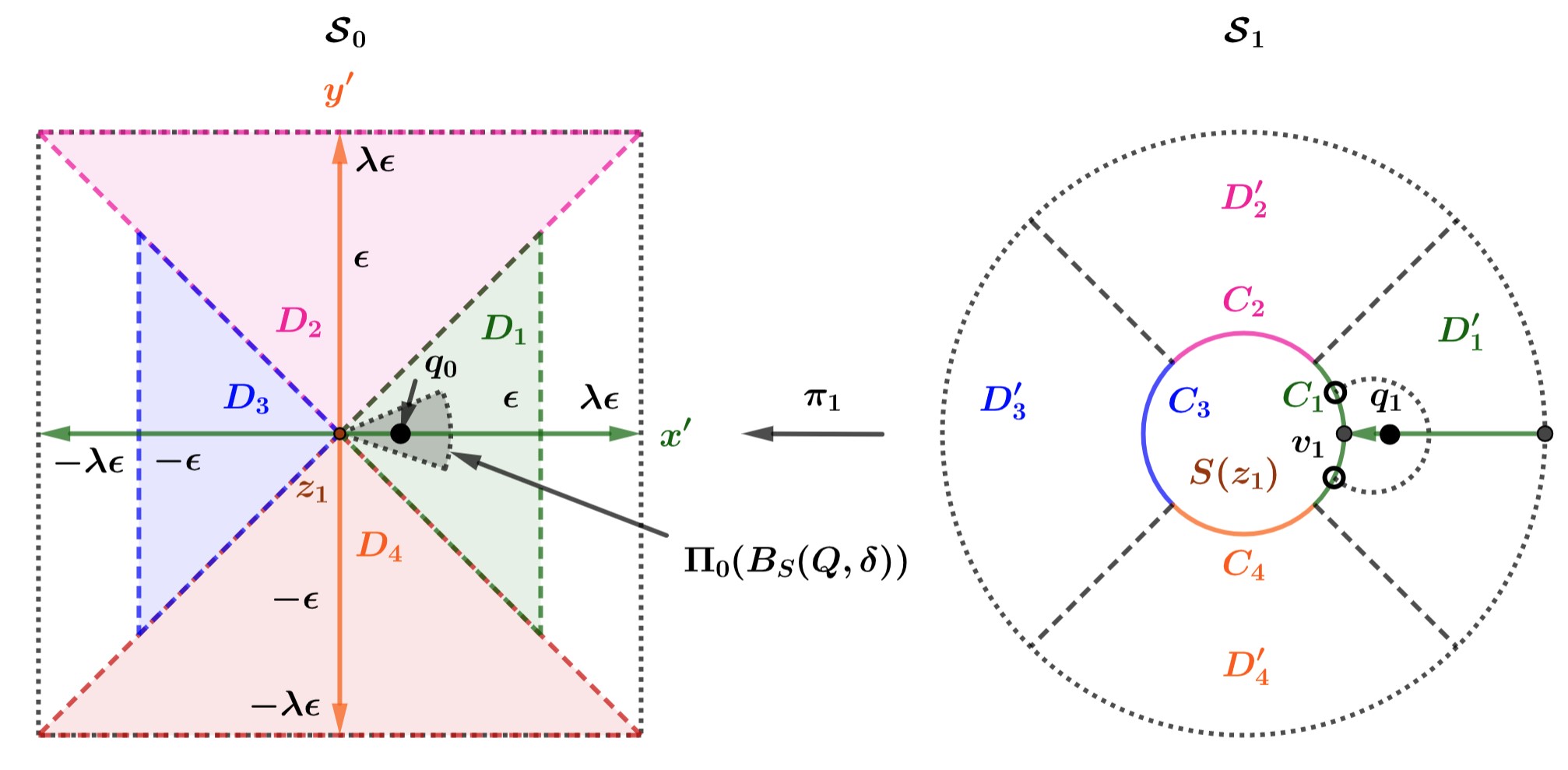}
    \caption{$\Pi_0(B_S(Q,\delta)) \subseteq D_1$ for small $\delta > 0$ for any points on the contracting lines\,.}
\end{figure}\\
%----------------------------------------
So for any small $\delta > 0$, $\Pi_0(B_S(Q, \delta)) \subseteq D_1$ for all $Q=(q_i)$ with $q_0 = z_1 + (x_Q, 0), 0 < x_Q < \epsilon - \delta.$
\end{proof}
%--------------------------------------
%------------------------------------------
\begin{theorem}\label{thm:502}
The system $(S^*, H^*)$ does not have the specification.
\end{theorem}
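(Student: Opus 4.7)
The plan is to imitate the proof of Theorem \ref{thm:501}, replacing the two boundary-circle points $z(1), z(2)$ (which lie in the blown-up set and hence outside $S^*$) by two ``interior'' points $Q^+, Q^- \in S^*$ whose $0$-th coordinates lie on the contracting axis through $z_1$ on opposite sides of $z_1$. Lemma \ref{lem:502} will take the role formerly played by Lemma \ref{lem:501}, while Proposition \ref{prop:501} continues to provide the combinatorial obstruction.

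First I would fix $\delta \in (0, \epsilon)$ as supplied by Lemma \ref{lem:502} and choose $x_Q \in (0, \epsilon - \delta)$ such that the two points $Q^\pm \in S_\infty$ with $\Pi_0(Q^\pm) = z_1 \pm (x_Q, 0)$ both lie in $S^*$; this is possible because $\Ocal$ is countable. By Lemma \ref{lem:502} (applied symmetrically to $Q^-$), one has $\Pi_0(B_S(Q^+, \delta)) \subseteq D_1$ and $\Pi_0(B_S(Q^-, \delta)) \subseteq D_3$. Since $G_0 := H_0^{n_1}$ acts near $z_1$ as the hyperbolic linear map $(x,y) \mapsto (\lambda^{-1} x, \lambda y)$ in the chosen coordinates, each iterate $(G^*)^i(Q^+)$ has $0$-th coordinate $z_1 + (\lambda^{-i} x_Q, 0)$, still on the positive contracting axis at distance less than $\epsilon - \delta$; hence Lemma \ref{lem:502} continues to apply at every iterate, and similarly for $Q^-$.

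Next, assume for contradiction that $(S^*, H^*)$ has the Bowen specification property and let $N = N(\delta)$ be the associated specification constant for $G^* := (H^*)^{n_1}$. Fix an integer $L > 2N$ and invoke specification with $s = 2$, $y_1 = Q^+$, $y_2 = Q^-$, $j_1 = 0, k_1 = L$, and $j_2 = k_2 = L + N$; this produces $x \in S^*$ whose $G^*$-orbit $\delta$-shadows $Q^+$ throughout $[0, L]$ and $Q^-$ at time $L + N$. Setting $y_0 := \Pi_0(x)$, the projection properties then force $G_0^i(y_0) \in D_1$ for every $i = 0, 1, \ldots, L$ and $G_0^{L+N}(y_0) \in D_3$.

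The final step is to contradict this via Proposition \ref{prop:501}. If $y_0$ is off the contracting axis, then after at most finitely many iterations $G_0^{i_0}(y_0)$ with $0 \leq i_0 \leq L$ lies in $D_1^*$, and Proposition \ref{prop:501} applied to the maximal visit of this orbit to $D$ guarantees at least $L - i_0$ further iterations in $D_2 \cup D_4$ before the orbit can leave $D$; for $L > 2N$ this places $G_0^{L+N}(y_0)$ in $D_2 \cup D_4$, disjoint from $D_3$. The main obstacle I expect lies in the degenerate case where $y_0$ sits exactly on the contracting axis, so that Proposition \ref{prop:501} does not apply directly because $D_1^*$ excludes the axis. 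I plan to handle this case by hand, exploiting that the axis is $G_0$-invariant and that orbits on the positive half-axis stay on the positive half-axis while contracting toward $z_1$, so that such orbits never enter $D_3$, contradicting $G_0^{L+N}(y_0) \in D_3$.
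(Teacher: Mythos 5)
Your proposal is correct and follows essentially the same route as the paper's own proof: both choose two shadowing targets in $S^*$ whose $0$-th coordinates lie on the contracting axis on opposite sides of $z_1$, use Lemma \ref{lem:502} to confine the projected orbit of the shadowing point to $D_1$ during the first block (and to place it in $D_3$ at the later time), and then derive the contradiction from the hyperbolic counting estimate of Proposition \ref{prop:501}. Your explicit handling of the degenerate on-axis case and of the fact that the iterates of $Q^{+}$ remain on the contracting axis within range of Lemma \ref{lem:502} merely spells out details the paper leaves implicit.
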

\begin{proof}
Allow us to abuse notation and suppose that $G := (H^*)^{n_1}$ has the specification.\\
Let $\delta > 0$ be sufficiently small (according to Lemma \ref{lem:502}).\\ Let $N = N(\delta)$ be the parameter for the specification of $G$ with respect to $\delta$. \\
Let $Q(i) = (q_j(i)) \in S^*$ where $q_0(i) = z_1 + ((-1)^{i+1}x_{Q(i)},0)$ with $0 < |x_{Q(i)}| < \epsilon - \delta$ for $i = 1, 2.$ \\
Let $0 = i_1 < j_1 = N+1$ and $i_2 = j_2 = 2N+1.$ Then there exists $u = (u_i) \in S^*$ such that for each $1 \leq k \leq 2$, $$G^{l_k}(u) \in B_S(G^{l_k}(Q(k), \delta))$$ for $l_k \in [i_k, j_k].$ This yields that, $H_0^{l}(u_0) \in D_1$ for all $l = 0 , 1, \ldots, N+1.$ \\So there exists $M \geq N+1$ such that $$H_0^l(u_0) \in D_1 \cup D_2 \cup D_4$$ for all $l = 0, 1, \ldots, 2M$. This is a contradiction because $2M > 2N+1.$
\end{proof}
%-------------------------------------------
\begin{remark}
Lemma \ref{lem:502} together with a slight modification of the proof of Theorem \ref{thm:501} yield that $(S^*, H^*)$ does not have the approximate product property  as well.
\end{remark} 
%%%%%%%%%%%%%%%%%%%%%%%%%%%%%%%%%%%%%%%%%%%%%%%%%%%%
\bibliographystyle{plain}
\bibliography{ArXiv} %% This is your file for bibtex
\nocite{*}
\end{document}